\documentclass[reqno]{amsart}

\usepackage{amsmath}
\usepackage{amssymb}
\usepackage[margin=1in]{geometry}
\usepackage{xcolor}
\usepackage{comment}
\usepackage{graphicx}
\usepackage{empheq}

\theoremstyle{plain}
\newtheorem{thm}{Theorem}[section]
\newtheorem{lem}{Lemma}[section]
\newtheorem{prop}{Proposition}[section]
\newtheorem{cor}{Corollary}[section]

\theoremstyle{definition}
\newtheorem{defn}{Definition}[section]

\theoremstyle{remark}

\newtheorem{rmk}{Remark}[section]

\newcommand{\tr}[1]{\operatorname{tr}\left(#1\right)}
\newcommand{\diag}[1]{\left[#1\right]}

\newcommand{\R}{\mathbb{R}}
\newcommand{\N}{\mathbb{N}}
\newcommand{\F}{\mathcal{F}}
\usepackage{cite}
\usepackage{hyperref}
\hypersetup{ colorlinks = true, urlcolor = blue, linkcolor = blue, citecolor = red }
\usepackage{enumitem}
\usepackage{nameref}
\makeatletter
\let\orgdescriptionlabel\descriptionlabel
\renewcommand*{\descriptionlabel}[1]{%
	\let\orglabel\label
	\let\label\@gobble
	\phantomsection
	\edef\@currentlabel{#1\unskip}%
	\let\label\orglabel
	\orgdescriptionlabel{#1}%
}
\numberwithin{equation}{section}

\begin{document}
	
\title[ Harnack inequality for anisotropic equations with nonstandard growth]{ Harnack inequality for anisotropic fully nonlinear equations with nonstandard growth}
	
\author[Byun]{Sun-Sig Byun}
\address{Department of Mathematical Sciences and Research Institute of Mathematics,
	Seoul National University, Seoul 08826, Republic of Korea}
\email{byun@snu.ac.kr}
	
\author[Kim]{Hongsoo Kim*}
\address{Department of Mathematical Sciences, Seoul National University, Seoul 08826, Republic of Korea}
\email{rlaghdtn98@snu.ac.kr}
	
\thanks {S.-S. Byun was supported by Mid-Career Bridging
	Program through Seoul National University. H. Kim was supported by the National Research Foundation of Korea(NRF) grant funded by the Korea government [Grant No. 2022R1A2C1009312].}
	
	\makeatletter
	\@namedef{subjclassname@2020}{\textup{2020} Mathematics Subject Classification}
	\makeatother
	\subjclass[2020]{35B65, 35D40, 35J15,  35J70}
	\keywords{fully nonlinear elliptic equations, Harnack inequality, nonstandard growth, anisotropic elliptic equations}
	
\everymath{\displaystyle}
	
\begin{abstract}
	We establish Harnack inequalities for viscosity solutions of a class of degenerate fully nonlinear anisotropic elliptic equations with nonstandard growth. A prototypical operator in this class is the degenerate anisotropic-$(p_i)$-Laplacian with rough coefficients.
    Our approach relies on the sliding paraboloid method, adapted by using anisotropic test functions tailored to the operator to derive the basic measure estimate.
    A central contribution of this work is the development of a doubling property, obtained from the explicit construction of a novel barrier function.
    By combining these tools with the intrinsic-geometry techniques introduced in \cite{DiBenedetto08, Vedansh25}, we prove the intrinsic Harnack inequality for this class of operators under appropriate conditions on the exponents $(p_i)$.
\end{abstract}
	
\maketitle
	
\section{Introduction} \label{sec1}
In this paper, we establish Harnack inequalities for viscosity solutions of the following class of inequalities in nondivergence form:
\begin{align} \label{PDE}
	\begin{cases}
		\mathcal{M}^-_{\lambda,\Lambda} \left( \diag{|D_iu|^{\frac{p_i-2}{2}}} D^2u \diag{|D_iu|^{\frac{p_i-2}{2}}} \right) -\mu\sum_i|D_iu|^{p_i-1} \leq c_0, \\
		\mathcal{M}^+_{\lambda,\Lambda} \left( \diag{|D_iu|^{\frac{p_i-2}{2}}} D^2u \diag{|D_iu|^{\frac{p_i-2}{2}}} \right)  +\mu\sum_i|D_iu|^{p_i-1} \geq -c_0,
	\end{cases}
	\quad \text{in } \Omega,
\end{align}
where $2 \leq p_1 \leq \cdots \leq p_n$, $0<\lambda \leq \Lambda$, $c_0 \geq0$, $\mu\geq0$, and $\diag{|z_i|^{\frac{p_i-2}{2}}} =\operatorname{diag}(|z_i|^{\frac{p_i-2}{2}})$ is the diagonal matrix with entries $|z_i|^{\frac{p_i-2}{2}}$ on the diagonal.

This structure encompasses several examples of equations including:
	\begin{enumerate}
		\item The degenerate anisotropic-$(p_i)$-Laplacian equation.
		\begin{align} \label{piLap}
			\tilde{\Delta}_{(p_i)}u(x)=  \sum_i \frac{1}{p_i-1}\partial_i(|\partial_iu|^{p_i-2}\partial_{i}u)=\sum_i |\partial_iu|^{p_i-2}\partial_{ii}u = f(x).  
        \end{align}
		\item More generally, for let $A(x)=(a_{ij}(x))$ be symmetric with spectrum in $[\lambda,\Lambda]$ and $|b_i(x)|\leq\mu$, 
		\begin{align*}
			\sum_{i,j} a_{ij}(x)|\partial_iu|^{\frac{p_i-2}{2}}|\partial_ju|^{\frac{p_j-2}{2}} \partial_{ij}u + \sum_i b_i(x)|\partial_iu|^{p_i-2}\partial_iu= f(x).    \end{align*}
	\end{enumerate}
The degenerate anisotropic-$(p_i)$-Laplacian \eqref{piLap} has attracted considerable attention and has been the subject of extensive research in recent decades.
This interest stems from two key features that make its regularity theory particularly delicate.

The first is its anisotropic degeneracy, which is more degenerate than that of the standard $p$-Laplacian $\Delta_{p} u = \sum_i \partial_i(|\nabla u|^{p-2} \partial_iu)$.
While the degeneracy of the $p$-Laplacian occurs when the gradient is identically zero, the anisotropic-$(p_i)$-Laplacian degenerates whenever one component of the gradient, $\partial_iu$, vanishes for some coordinate direction $i$. 
Hence, even when the exponents $p_i$ are all equal, several important regularity properties had remained open until recently.
Lipschitz regularity for weak solutions was established in
\cite{Bousquet18}, while the authors of \cite{Demengel162, Demengel16} established it for viscosity solutions. 
Moreover, while basic estimates such as the Harnack inequality for weak solutions can be obtained by the classical De Giorgi-Nash-Moser theory for $p$-growth, the corresponding result for viscosity solutions was  recently proved in \cite{Kim26}.
For additional related results, see \cite{Bousquet23,Bousquet16}.

The second feature is that the equation exhibits nonstandard growth, which means that the equation arises from the Euler-Lagrange equation of a functional of the following type:
\begin{align*}
    w \mapsto \int F(Dw) dx, \quad \text{ where } \quad |z|^{p_1} \lesssim F(z) \lesssim |z|^{p_1}+|z|^{p_n}.
\end{align*}
Functionals of this type have been studied intensively since the foundational works \cite{Marcellini89,Marcellini91} and they now constitute a central topic in the regularity theory of nonlinear elliptic equations.
See \cite{Marcellini20,Mingione21} for broad surveys of the field and \cite{Kim252} for related results in the viscosity setting.

A key issue in establishing regularity for problems with nonstandard growth is the smallness of the gap between $p_1$ and $p_n$. 
Without such a condition,  counterexamples to regularity are known; see comment (2) below Corollary \ref{Holdercor}.
The sharp condition for local boundedness of weak solutions of \eqref{piLap} established in \cite{Fusco90,Boccardo90} is 
\begin{align} \label{optpcond}
    p_n \leq  \frac{n\overline{p}}{n-\overline{p}}, \quad \text{ where } \overline{p} = \left( \frac{1}{n}\sum_i\frac{1}{p_i} \right)^{-1}.
\end{align}
See also \cite{Cupini15,Cupini17, Fusco93, Cianchi00,Cupini09} for further results on local boundedness.
Moreover, Lipschitz regularity was recently established in \cite{Bousquet20} for bounded weak solutions of \eqref{piLap} without any assumption on the closeness of $p_1$ and $p_n$, and the analogous result for viscosity solutions was proved in \cite{Byun25}. 
See also \cite{Demengel17,Bousquet24}.

To the best of our knowledge, despite substantial progress, the Harnack inequality for \eqref{piLap} with rough coefficients under sharp conditions on the exponents $(p_i)$ remains open.
For weak solutions involving rough coefficients, H\"older regularity has been demonstrated for specific cases of the exponents $(p_i)$, satisfying \eqref{optpcond} and $p_1<p_2 =\cdots =p_n$ (or $p_1=\cdots =p_{n-1}<p_n$), as shown in \cite{Liskevich09,Marcellini14} and \cite{Piro-Vernier19}.
Moreover, when the exponents $p_i$ are all distinct, the authors of \cite{Dibenedetto16} established H\"older regularity, but under the condition on $(p_i)$ as
\begin{align} \label{pwcond}
    p_n - p_1 \leq \frac{1}{q} \quad \text{ for a sufficiently large } q=q(n,(p_i),\lambda,\Lambda).
\end{align}
Related developments can also be found in \cite{Liao20,Ciani25,Feo21,Baldelli24, Ciani232}.
These works mainly rely on De Giorgi-type techniques and the intrinsic geometry framework introduced in \cite{DiBenedetto08,DiBenedetto12} to address the inhomogeneous scaling of the equations.

In the case of weak solutions of \eqref{piLap} with constant coefficients, the Harnack inequality was established in \cite{Ciani23} under the condition
\begin{align} \label{ppcond}
     2<p_1, \quad p_n \leq \overline{p}\left( 1+\frac{1}{n}\right),
\end{align}
which matches with a parabolic range. 
The novel approach used in \cite{Ciani23} is constructing a self-similar Barenblatt solution by abstract functional-analytic methods and using it as a barrier function.

However, to the best of our knowledge, there are no known results regarding Harnack inequality or H\"older regularity for viscosity solutions of \eqref{PDE}.
The methods typically employed for weak solutions are not applicable in this context. In particular, the absence of a Sobolev inequality precludes the use of De Giorgi-type approaches, and the lack of an abstract functional-analytic framework prevents the construction of barrier functions.
Despite these technical challenges, we establish an intrinsic Harnack inequality for viscosity solutions of \eqref{PDE}, including the anisotropic-$(p_i)$-Laplacian with rough coefficients—a result that appears to remain unknown even in the weak setting—under appropriate conditions on the exponents $(p_i)$.

We now state the main theorem.
	
\begin{thm} \label{Mainthm}
	Let $u \in C(\Omega)$ be nonnegative and satisfy \eqref{PDE} in the viscosity sense in $\Omega$. 
    Assume that $2\leq p_1\leq \cdots \leq p_n$ and that
    \begin{align} \label{pcond}
        \frac{p_n-1}{p_1-1} <  \frac{n\frac{p_n}{\overline{p}} -(1-\frac{\lambda}{\Lambda})\frac{1}{p_1}}{n-\frac{\lambda}{\Lambda}-(1-\frac{\lambda}{\Lambda})\frac{1}{p_1}} \quad \text{ where } \quad \overline{p} = \left( \frac{1}{n}\sum_i \frac{1}{p_i}\right)^{-1}.
    \end{align}
	Then there exist constants $C_0,R_0>1$ and $\epsilon_1,\epsilon_2 >0$ depending on $n,(p_i),\lambda$ and $\Lambda$ such that if $u(0)>0$,  the following statements hold:
    \begin{enumerate}
        \item For any $r \in(0,1]$, if $K_{R_0r}(u(0)) \subset \Omega$, $c_0 \in [0,\epsilon_1 u(0)^{p_n-1}]$ and $\mu \in [0,\mu_1]$ where $\mu_1=\mu_1(u(0),n,(p_i),\lambda,\Lambda)>0$, then
	\begin{align*}
		 \sup_{K_r(u(0))}u\leq C_0u(0). 
	\end{align*}
        \item For any $r \in(0,1]$, if $K_{R_0r}(u(0)/C_0) \subset \Omega$, $c_0 \in [0,\epsilon_2u(0)^{p_n-1}]$ and $\mu \in [0,\mu_2]$ where $\mu_2=\mu_2(u(0),n,(p_i),\lambda,\Lambda)>0$, then
        \begin{align*}
		  \inf_{K_r(u(0)/C_0)}u \geq \frac{1}{C_0}u(0) . 
	    \end{align*}
    \end{enumerate}
    Here,
    \begin{align*}
        K_r(M) :=\{|x_i| < r^{\alpha_i}M^{\beta_i}\} \quad \text{ with } \quad  \alpha_i = \frac{1}{p_i}, \quad \beta_i = -\left( \frac{p_n-p_i}{p_i} \right).
    \end{align*}
\end{thm}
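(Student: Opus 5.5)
The proof rests on three ingredients that the abstract announces and the paper develops later: a basic measure estimate, a doubling property, and a DiBenedetto-type intrinsic covering argument. I would assemble them in that order.

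\textbf{Step 1: scaling.} The intrinsic cylinder $K_r(M)$ is tailored to the operator. If $u$ solves \eqref{PDE} and $M=u(0)$, then $v(y)=u(x)/M$ with $x_i=r^{\alpha_i}M^{\beta_i}y_i$ should again satisfy \eqref{PDE} in $K_1(1)$, with $c_0$ replaced by $c_0 r M^{1-p_n}$ and $\mu$ rescaled by a factor depending on $M$ and $r$. Indeed, $D_iv = r^{-\alpha_i}M^{-\beta_i-1}D_iu$. Each diagonal entry $|D_iu|^{p_i-2}D_{ii}u$ then scales like $M^{p_i-1}\,r^{-1}M^{-\beta_ip_i}$, and since $\beta_ip_i=-(p_n-p_i)$ this equals $M^{p_n-1}r^{-1}$, uniformly in $i$. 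The mixed entries scale the same way. This explains the smallness hypotheses $c_0\le \epsilon M^{p_n-1}$ and $\mu\le\mu_j(u(0))$. We may therefore assume $u(0)=1$ and $r=1$, and it suffices to prove both statements in unit intrinsic cylinders.

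\textbf{Step 2: measure estimate and doubling.} Next I would establish the basic measure estimate by the sliding paraboloid method, using anisotropic test functions $\sum_i a_i|x_i-y_i|^{q_i}$ adapted to the weights $|D_iu|^{(p_i-2)/2}$. The conclusion is that if a supersolution satisfies $\inf_{K_1}u\le 1$ on a slightly smaller cylinder, then $|\{u\le M_0\}\cap K_R|\ge \delta|K_R|$. Iterating this via a Calderón–Zygmund/Krylov–Safonov cube decomposition, adapted to anisotropic boxes, gives an $L^\varepsilon$ decay $|\{u>t\}\cap K_1|\le Ct^{-\varepsilon}$.

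The doubling property must be added separately. If $u\ge 1$ on a positive fraction of $K_1$, then $u\ge \eta$ on a cylinder $K_2(\cdot)$ of comparable intrinsic size. For this I would construct an explicit barrier of the form
\[
\phi(x)=\Big(\sum_i c_i|x_i|^{\gamma_i}\Big)^{-k}-\text{const},
\]
with exponents chosen so that $\mathcal{M}^-$ of the weighted Hessian is positive away from a small core. Condition \eqref{pcond} is exactly what should make the worst-case eigenvalue balance positive. The $(1-\lambda/\Lambda)$ terms in \eqref{pcond} come from the negative radial direction being weighted by $\Lambda$ and the $n-1$ positive directions by $\lambda$, while the ratio $(p_n-1)/(p_1-1)$ arises from the mismatch of homogeneities of $|D_i\phi|^{p_i-2}$ across coordinates. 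I expect this to be the main obstacle. It requires choosing $\gamma_i$ and $k$ so that every coordinate contribution has the same sign and dominates the lower-order terms $\mu|D_i\phi|^{p_i-1}$ and $c_0$. Since $\phi$ is not radially symmetric, computing the eigenvalues of $\diag{|D_i\phi|^{\frac{p_i-2}{2}}}D^2\phi\diag{|D_i\phi|^{\frac{p_i-2}{2}}}$ will need a careful Cauchy–Schwarz bound on the rank-one part. Comparison with $u$ on an annular intrinsic region then gives the doubling.

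\textbf{Step 3: the two inequalities.} With measure decay and doubling in hand, I would follow the intrinsic Harnack scheme of \cite{DiBenedetto08, Vedansh25}.

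For statement (2), the infimum bound, start from $u(0)=1$. The $L^\varepsilon$ estimate, applied to a supersolution near $0$, shows that $u\ge 1/2$ on a set of positive measure in a small intrinsic cylinder. The doubling property is then applied repeatedly to expand positivity, each time to a cylinder whose level has dropped by a factor $\eta$. Because the cylinders are defined via $K_r(M)$ with $M$ decreasing, the expansions must be tracked in intrinsic geometry; this is why the conclusion holds on $K_r(u(0)/C_0)$ rather than $K_r(u(0))$.

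For statement (1), the supremum bound, I would use the standard contradiction/Krylov–Safonov argument. Suppose $u(x_0)$ is large at some point of $K_1$. Combining the $L^\varepsilon$ decay with the expansion of positivity produces a sequence of points $x_k$ with $u(x_k)\to\infty$ accumulating inside $K_{R_0}$, contradicting continuity. The intrinsic scaling ensures that the cylinders around $x_k$ shrink summably, and $R_0$ is fixed large enough to contain them all.
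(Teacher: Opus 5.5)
\textbf{Main gap: the $L^\varepsilon$ estimate.} You have the right ingredients: intrinsic scaling, anisotropic sliding paraboloids, the barrier $(\sum_i|b_ix_i|^{a_i})^{-\gamma}$ with a Cauchy--Schwarz bound on the rank-one part, and a Krylov--Safonov contradiction argument for the supremum. The assembly fails at the $L^\varepsilon$ step. You propose to get $L^\varepsilon$ decay from the sliding-paraboloid measure estimate alone, via a Calder\'on--Zygmund decomposition, and to add doubling only afterwards. That does not go through here, for two reasons.

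First, the sliding-paraboloid measure estimate needs a small value of $u$ in a tiny \emph{central} intrinsic cube: the vertices lie in $K_{r_0}(1)$ and the hypothesis is $\inf_{K_{r_0}(1)}u\le 1$. A covering step only tells you that $u$ is small somewhere in a neighbouring cube.

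Second, and more seriously, at level $M^k$ the rescaled function $u(x_0+r^{\alpha_i}M^{k\beta_i}y_i)/M^k$ satisfies the same inequality only on cubes shaped like $K_r(M^k,x_0)$. These become thinner as $k$ grows, so there is no level-independent dyadic family. Moreover, the thin cube $K_1(M^k,x_0)$ around a point with $u(x_0)>M^k$ need not contain the origin, which is where your only information $u(0)\le 1$ sits.

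The paper closes both gaps with the barrier-based doubling property in \emph{pointwise} form, Lemma~\ref{doublem}: $u>L_0m_0$ on $K_{r_0}(L_0m_0)$ implies $u>m_0$ on $K_1(m_0)$. It is proved by comparing $u$ with $\Phi-m_0$ on $K_{R_1}(m_0)\setminus K_{r_0}(L_0m_0)$. Used in contrapositive, it pushes the small value into the central cube (Lemma~\ref{measbarlem}). Iterated (Lemma~\ref{doub3lem}), it gives $\inf_{K_{r_1^k}(L_1^km_0,x_0)}u\le L_1^km_0$, which bounds the radii of the level-$L_1^km_0$ intrinsic cubes in the Vitali covering of Theorem~\ref{Lepsilonthm}.

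Your measure-form doubling (``$u\ge1$ on a positive fraction of $K_1$ implies $u\ge\eta$ on a larger cylinder'') is not what comparison with a barrier yields, because comparison needs $u$ above the barrier pointwise on the inner boundary. Getting it would require a measure-to-pointwise step, which is essentially the contrapositive of the $L^\varepsilon$ estimate. As arranged, the argument either has a hole at the $L^\varepsilon$ step or is circular.

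\textbf{Second gap: statement (2).} The $L^\varepsilon$ estimate bounds nonnegative supersolutions from above on most of a cube. So from $u(0)=1$ it cannot by itself produce a set of positive measure where $u\ge\frac12$. Applying it to $C-u$, with $C$ from (1), does not help either: the threshold $C-\frac12$ exceeds $(C-u)(0)=C-1$ only by a factor close to $1$, far below the level $L_2$ at which the decay becomes effective.

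No expansion-of-positivity chain is needed. Once (1) holds in the normalized form $u(0)\le m_0\Rightarrow\sup_{K_{1/2}(m_0)}u\le C_0m_0$, statement (2) follows by recentring at any $x_0\in K_{1/2}(m_0)$ with $u(x_0)<m_0$. Since $0\in K_{1/2}(m_0,x_0)$, this would force $u(0)<C_0m_0$.

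\textbf{On statement (1).} Make explicit that each step applies the $L^\varepsilon$ estimate twice. It is applied to $u$ around a point $\bar x$ with $u(\bar x)\le\nu^km_0$, supplied by Lemma~\ref{doub3lem}. It is also applied to the supersolution $2\nu^kL_2m_0-u$, which is where the second inequality in \eqref{PDE} enters.

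\textbf{Minor points.} In the barrier, the rank-one direction is the positive one and carries $\lambda$, while the complementary negative part carries $\Lambda$; you have these reversed. Also, $D_iv=r^{\alpha_i}M^{\beta_i-1}D_iu$, not $r^{-\alpha_i}M^{-\beta_i-1}D_iu$.
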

A direct consequence of the main theorem is the following H\"older regularity.
\begin{cor} \label{Holdercor}
    Let $u \in C(\Omega)$ be a viscosity solution of \eqref{PDE} in $\Omega$ under the same assumptions on $(p_i)$ as in Theorem \ref{Mainthm}.
    Then $u$ is locally H\"older continuous in $\Omega$ with a universal exponent  $\alpha=\alpha(n,(p_i),\lambda,\Lambda) \in (0,1)$.
\end{cor}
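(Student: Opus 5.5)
The plan is to derive oscillation decay from the two-sided intrinsic Harnack inequality of Theorem \ref{Mainthm}, by iterating over a nested sequence of intrinsic cylinders $K_r(M)$ whose shape is updated at each step.

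\textbf{Step 1: invariances.} Fix $x_0\in\Omega$ and a compact neighbourhood on which $u$ is bounded, say $|u|\le M_0$. Translations in $x$ and adding constants to $u$ preserve \eqref{PDE}, since the operator involves only $Du$ and $D^2u$. Hence $u-\inf u$ and $\sup u-u$ are nonnegative solutions of \eqref{PDE}, the latter because the roles of $\mathcal M^\pm$ are exchanged. Set $\omega_0=2M_0$ and consider boxes $Q_k=x_0+K_{r_k}(\omega_k)$ with $\omega_k=\operatorname{osc}_{Q_k}u$.

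\textbf{Step 2: the dichotomy.} Let $\omega=\operatorname{osc}_{Q}u$ and $m=\inf_Q u$. At the centre, either $u(x_0)-m\ge\omega/2$ or $\sup_Q u-u(x_0)\ge\omega/2$. In the first case, apply part (2) of Theorem \ref{Mainthm} to $v=u-m$ with $v(x_0)\approx\omega/2$. Then $v\ge v(x_0)/C_0\ge \omega/(2C_0)$ on $K_r(v(x_0)/C_0)$, so the infimum rises and
\[
\operatorname{osc}_{K_r(v(x_0)/C_0)}u\le (1-\tfrac{1}{2C_0})\,\omega .
\]
The second case is symmetric. This requires $K_{R_0r}(v(x_0)/C_0)\subset Q$. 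Because $\beta_i\le0$, shrinking $M$ enlarges the box, so we must choose $r$ small relative to $r_k$ to compensate. Concretely, $(R_0r)^{\alpha_i}(\omega/C_0)^{\beta_i}\le r_k^{\alpha_i}\omega^{\beta_i}$ holds for all $i$ once $r=\delta r_k$ with $\delta$ depending only on $R_0,C_0,(p_i)$. Note that $\beta_n=0$ and the $\beta_i$ are bounded.

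\textbf{Step 3: smallness of $c_0,\mu$ along the iteration.} This is the main obstacle, since we need $c_0\le\epsilon\,\omega_k^{p_n-1}$ and $\mu\le\mu_{1,2}(\omega_k)$ at every scale. I would handle it by rescaling. The map $u\mapsto u(\rho^{\alpha_i}x_i)$ multiplies the second-order term by $\rho^{-1}$, by the homogeneity $\frac{p_i-2}{p_i}+\frac{2}{p_i}\cdot$ (the chosen scaling). It multiplies the drift term by a factor $\rho^{-(p_i-1)/p_i}\gg\rho^{-1}$ relative to it, so after normalization $c_0$ and $\mu$ become $\rho c_0$ and $\rho^{1/p_i}\mu$, which shrink as $\rho\to0$. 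The standard alternative is to stop the iteration as soon as $\omega_k\le C r_k^{\gamma}$; in that case Hölder decay is immediate. Otherwise $\omega_k$ is large relative to the forcing, and the hypotheses of Theorem \ref{Mainthm} hold after rescaling. I would check that $\mu_{1,2}(M)$ behaves like a power of $M$ compatible with this rescaling.

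\textbf{Step 4: conclusion.} We obtain $\omega_{k+1}\le\max\{\eta\,\omega_k,\,C r_k^{\gamma}\}$ with $\eta=1-1/(2C_0)$ and $r_{k+1}=\delta r_k$. Since $K_r(M)$ contains a Euclidean ball of radius $c\,r^{1/p_1}\min(1,M^{\beta_1})$ and $\omega_k$ is bounded, the boxes shrink geometrically in Euclidean size. Hence $\operatorname{osc}_{B_\rho(x_0)}u\le C\rho^{\alpha}$ with $\alpha=\alpha(n,(p_i),\lambda,\Lambda)$, uniformly on compact subsets. This yields Corollary \ref{Holdercor}.
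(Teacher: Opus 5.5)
Your proposal is correct and follows the route the paper has in mind. The paper gives no separate proof and calls the corollary a direct consequence of Theorem~\ref{Mainthm}; the standard derivation is exactly yours: reduce the oscillation on nested intrinsic boxes by applying part (2) to $u-\inf u$ or $\sup u-u$, shrink the radius by a factor $\delta=\delta(R_0,C_0,(p_i))$ so that the Harnack box fits, and absorb $c_0,\mu$ by rescaling together with the alternative $\omega_{k+1}=\max\{\eta\omega_k,Cr_k^{\gamma}\}$.

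The repairs needed are bookkeeping only:
\begin{itemize}
\item $\omega_k$ must be a prescribed upper bound for $\operatorname{osc}_{Q_k}u$, not the oscillation itself; otherwise the definition of $Q_k$ is circular.
\item You need the extra trivial alternative ``$\operatorname{osc}_{Q_k}u\le\eta\omega_k$''. In the remaining case $v(x_0)\gtrsim\omega_k$, so $x_0+K_{R_0\delta r_k}(v(x_0)/C_0)$ really lies in $Q_k$; a smaller level would widen the box.
\item The iteration is never ``stopped'' when $\omega_k\le Cr_k^{\gamma}$; it continues with the $\max$ recursion, exactly as in your Step 4.
\item In Step 3, the substitution $x_i\mapsto\rho^{\alpha_i}x_i$ multiplies the second-order term by $\rho$, not $\rho^{-1}$. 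Your resulting data $\rho c_0$ and $\rho^{1/p_i}\mu$ are nevertheless correct.
\end{itemize}
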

We make a few comments on the theorem.
\begin{enumerate}
    \item By scaling, we can always assume that $c_0$ and $\mu$ are sufficiently small, provided the radius $r$ is chosen small enough.
    Furthermore, when $c_0=\mu=0$, the results hold for any $r>0$.
    See Remark \ref{scal}.
    \item When the gap between $p_1$ and $p_n$ is sufficiently large, there are  counterexamples to the Harnack inequality; see \cite{Giaquinta87,Marcellini89}.
    Specifically, when $n\geq 6$, $p_1=\cdots = p_{n-1} =2$ and $p_n=4$, then an unbounded function
    \begin{align*}
    u(x) = \sqrt{\frac{n-4}{8}}\frac{x_n^2}{\left|\sum_{i=1}^{n-1}x_i^2\right|^{1/2}} \geq0
    \end{align*}
    is a classical solution (and thus a viscosity solution) of the equation
    \begin{align*}
    \sum_{i=1}^{n-1}\partial_{ii}u + |\partial_nu|^2\partial_{nn}u = 0 \quad \text{ in } \R^n \setminus  \bigcap_{i=1}^{n-1}\{x_i = 0\}.
    \end{align*}
    In particular, $u=0$ in $\{x_n=0\},$ and therefore the Harnack inequality fails.
\end{enumerate}

\subsection{Remark on the assumptions on $(p_i)$}
We provide several remarks concerning the main structural condition \eqref{pcond} on the exponents $(p_i)$.
The proof presented in this paper appears to extend naturally to the mixed singular--degenerate regime in which $1<p_i<2$ for some $i$.
The reason we restrict ourselves to the degenerate case $p_1 \geq 2$ is that the viscosity framework suitable for this mixed anisotropic singular setting has not yet been developed.

It is currently unclear whether the ratio $\Lambda/\lambda$ and the smallest exponent $p_1$ are essential to the  validity of the Harnack inequality.
Our assumption \eqref{pcond} is more restrictive than the optimal condition for boundedness given in \eqref{optpcond}.
Moreover, our result may be interpreted as providing a more explicit characterization of the parameter $q$ in \eqref{pwcond} appearing in \cite{Dibenedetto16}.

In the special case where $\Lambda/\lambda = 1$ and $c_0 = \mu = 0$, the equation \eqref{PDE} reduces to the anisotropic-$(p_i)$-Laplacian \eqref{piLap}.
Under the assumption $\Lambda/\lambda = 1$, our condition \eqref{pcond} simplifies to
\begin{align*}
    \frac{p_n-1}{p_1-1} < \frac{p_n}{\overline{p}} \frac{n}{n-1}.
\end{align*}
Comparing this to the condition \eqref{ppcond} obtained in \cite{Ciani23}, we observe that neither condition is uniformly stronger than the other.
For instance, if $p_1 < p_2 = \cdots = p_n$, then \eqref{pcond} is more restrictive than \eqref{ppcond}. On the other hand, if $p_1 = \cdots = p_{n-1} < p_n$, then \eqref{ppcond} is more restrictive than \eqref{pcond}.

\subsection{Outline of the proof}
We now introduce main ideas and novelties of our proof.
Due to the non-homogeneous scaling of the equation \eqref{PDE}, it is essential to employ the intrinsic scaling technique, originally introduced in \cite{DiBenedetto08,DiBenedetto12}.
In those works, this approach was used to establish the Harnack inequality for the parabolic $p$-Laplacian. A recent extension to viscosity solutions of the corresponding parabolic problem was achieved in \cite{Vedansh25}.
The core idea is that, when controlling the superlevel set $\{u>M\}$, we use the intrinsic cube $K_r(M)$ instead of the standard cube $Q_r$, which is compatible with the natural scaling of the equation at level $M$.
A major difficulty is that, when $M$ is large, $K_r(M)$ becomes very flat in most directions, which severely complicates any covering or iteration argument used in the proof. 
Moreover, while the intrinsic geometry of the parabolic $p$-Laplacian involves distinct scalings in only two directions (time and space), the intrinsic cube $K_r(M)$ for equation \eqref{PDE} can exhibit different growth rates in every coordinate direction, making the situation far more intricate.
Despite these difficulties, by suitably adapting a technique developed in \cite{Vedansh25}, we are able to derive a Krylov--Safonov type regularity result.

The first step of the proof is the basic measure estimate in Lemma \ref{basicmeaslem}.
We employ the sliding paraboloid method, originally introduced by Cabr\'e \cite{Cabre97} and developed by Savin \cite{Savin07}.
The procedure involves sliding a paraboloid from below until it first touches the graph of the solution $u$.
Then the resulting contact points exhibit certain favorable properties, and the measure of these touching points can be estimated in terms of the measure of the corresponding vertex points through the area formula.
However, the standard isotropic paraboloid is incompatible with the strong anisotropic degeneracy inherent in equation \eqref{PDE}.
Instead, we adapt an anisotropic paraboloid of the form:
 \begin{align*}
        \varphi(x) =  K_1 |x_1|^{\frac{p_1}{p_1-1}} + \cdots +K_n |x_n|^{\frac{p_n}{p_n-1}}
    \end{align*}
for some $K_i>0$ chosen to align with the coordinate-wise degeneracy of the equation, following the approach introduced in \cite{Kim26,Byun25}.

The second step of the proof is the doubling property, stated in Lemma \ref{doublem}.
This requires constructing a suitable barrier function, which constitutes the most technically demanding part of the argument.
We explicitly build an anisotropic barrier of the form
\begin{align*}
    \Phi(x) =(|b_1x_1|^{a_1} + \cdots+|b_nx_n|^{a_n})^{-\gamma}.
\end{align*}
By appropriately selecting parameters $a_i>1$, $b_i>0$ and $\gamma>0$, we verify in Lemma \ref{Phipdelem} that the barrier function is a subsolution to \eqref{Phiequ} provided the exponents $(p_i)$ satisfy our assumption \eqref{pcond}.
Exploiting the scaling property of the barrier function and applying the comparison principle with the solution, we then deduce the desired doubling property.
This explicit anisotropic barrier construction appears to be novel in the setting of the degenerate anisotropic-$(p_i)$-Laplacian.
We anticipate that this construction may also be useful for proving the Harnack inequality for weak solutions.
Combining these two fundamental results with a covering argument based on the intrinsic cubes $K_r(M)$, we obtain $L^\epsilon$ estimates, Theorem \ref{Lepsilonthm}, which give algebraic decay of the superlevel sets $\{u>M\}$.
These estimates are the final ingredient in the proof of the Harnack inequality, Theorem \ref{Mainthm}.

The paper is organized as follows.
In Section \ref{sec2}, we introduce notation and some preliminaries.
In Section \ref{sec3}, we prove the basic measure estimate, Lemma \ref{basicmeaslem}.
In Section \ref{sec4}, we construct an anisotropic barrier function and in Section \ref{sec5}, we prove the doubling property using the barrier function.
In Section \ref{sec6}, we establish the $L^\epsilon$ estimates for supersolutions, and we prove the main theorem, Theorem \ref{Mainthm}, in Section \ref{sec7}.

\section{Notation and Preliminaries} \label{sec2}
Throughout this paper, we write a point $x \in \mathbb{R}^n$ as $x=(x_1,\cdots ,x_n)$. 
We denote by $S(n)$ the space of symmetric $n \times n$ real matrices, and $I_n \in S(n)$ denotes the identity matrix.
For $r>0$ and $x_0 \in \R^n$, the standard cube is defined as $Q_r(x_0) = \{ x \in \mathbb{R}^n : |x_i-(x_0)_i| <r \text{ for any } i\in\{ 1, \cdots,n\}\}$.
We also write $Q_r = Q_r(0)$.

Moreover, we define the intrinsic cube adapted to the anisotropic scaling of the equation as
\begin{align*}
    aK_r(M,x_0) :=  \{x \in \R^n : |x_i-(x_0)_i| < ar^{\alpha_i}M^{\beta_i}\} \quad \text{ where } \quad  \alpha_i = \frac{1}{p_i}, \ \beta_i = -\left( \frac{p_n-p_i}{p_i} \right),
\end{align*}
for $a,r,M>0$ and $x_0 \in \R^n$.
We abbreviate $K_r(M,x_0)=1 \cdot K_r(M,x_0)$ and $K_r(M)=K_r(M,0)$.
Observe that $\alpha_1 \geq \cdots  \geq \alpha_n >0$ and $\beta_1 \leq  \cdots \leq \beta_n=0$.
Thus, for $0<N\leq M$ and $0<r \leq s$, we have $K_r(M) \subset K_s(N)$.

For $a=(a_i) \in \mathbb{R}^n$, we always write the diagonal matrix with entries $a_i$ as
	\begin{align*}
		\diag{a_i} := \operatorname{diag}(a_1,\cdots,a_n) \in S(n).
	\end{align*}

We recall the definition and basic properties of the Pucci extremal operators (see \cite{Caffarelli95}).
\begin{defn}
	For given $0<\lambda \leq \Lambda$, the Pucci operators $\mathcal{M}_{\lambda,\Lambda}^{\pm} : S(n) \rightarrow \mathbb{R}$ are defined as follows:
	\begin{align*}
		\mathcal{M}_{\lambda,\Lambda}^{+}(M) := \lambda \sum_{e_i(M)<0}e_i(M) + \Lambda \sum_{e_i(M)>0}e_i(M), \quad 
		\mathcal{M}_{\lambda,\Lambda}^{-}(M) := \Lambda \sum_{e_i(M)<0}e_i(M) + \lambda \sum_{e_i(M)>0}e_i(M),
	\end{align*}
	where $e_i(M)$ are the eigenvalues of $M$.
	We also abbreviate $\mathcal{M}^{\pm}_{\lambda,\Lambda}$ as $\mathcal{M}^{\pm}$.
	\end{defn}
    \begin{prop} \label{pucciprop}
		For any $M,N \in S(n)$, we have
		\begin{enumerate}
            \item $\mathcal{M}^{\pm}(cM) = c\mathcal{M}^{\pm}(M)$ for $c>0$.
			\item $\mathcal{M}^{-}(-M) = -\mathcal{M}^{+}(M) $. 
            \item For a nonnegative definite $M\geq 0$, $\mathcal{M}^{-}_{\lambda,\Lambda}(M) = \lambda\tr{M} $ and  $\mathcal{M}^{+}_{\lambda,\Lambda}(M) = \Lambda\tr{M} $.
			\item $ \mathcal{M}^{-}(M) +\mathcal{M}^{-}(N)\leq \mathcal{M}^{-}(M+N) \leq \mathcal{M}^{-}(M) +\mathcal{M}^{+}(N).$ 
		\end{enumerate}
	\end{prop}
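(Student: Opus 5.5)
The four properties follow from two standard representations of the Pucci operators:
\begin{align*}
\mathcal{M}^{-}(M) = \inf_{A \in \mathcal{A}_{\lambda,\Lambda}} \tr{AM}, \qquad \mathcal{M}^{+}(M) = \sup_{A \in \mathcal{A}_{\lambda,\Lambda}} \tr{AM},
\end{align*}
where $\mathcal{A}_{\lambda,\Lambda}$ is the set of symmetric matrices with spectrum in $[\lambda,\Lambda]$. The first step is to verify these formulas. Diagonalize $M = O^T \diag{e_i(M)} O$. For any $A \in \mathcal{A}_{\lambda,\Lambda}$, put $B = OAO^T$; its diagonal entries lie in $[\lambda,\Lambda]$, and
\begin{align*}
\tr{AM} = \sum_i B_{ii}\, e_i(M).
\end{align*}
This sum is minimized by taking $B_{ii}=\Lambda$ when $e_i(M)<0$ and $B_{ii}=\lambda$ otherwise, and maximized by the opposite choice. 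Both extremal values are attained by the diagonal choice $A = O^T \diag{B_{ii}} O$. The resulting values are exactly the definitions of $\mathcal{M}^{\mp}(M)$.

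Properties (1)--(3) can then be read off directly. For (1), when $c>0$ the eigenvalues of $cM$ are $c\,e_i(M)$ with the same signs, so the definition scales linearly. For (2), the eigenvalues of $-M$ are $-e_i(M)$. The negative eigenvalues of $-M$ are the negatives of the positive eigenvalues of $M$, and they get weight $\Lambda$ in $\mathcal{M}^-$, which is the weight the positive eigenvalues of $M$ get in $\mathcal{M}^+$; the other group works the same way. This gives $\mathcal{M}^-(-M) = -\mathcal{M}^+(M)$. For (3), if $M \ge 0$ all eigenvalues are nonnegative, so $\mathcal{M}^-(M) = \lambda \sum_i e_i(M) = \lambda\tr{M}$ and $\mathcal{M}^+(M) = \Lambda\tr{M}$.

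For (4), use the inf representation. For every $A \in \mathcal{A}_{\lambda,\Lambda}$,
\begin{align*}
\tr{A(M+N)} = \tr{AM} + \tr{AN} \ge \mathcal{M}^-(M) + \mathcal{M}^-(N).
\end{align*}
Taking the infimum over $A$ gives the lower bound $\mathcal{M}^-(M+N) \ge \mathcal{M}^-(M)+\mathcal{M}^-(N)$. For the upper bound, choose $A_0 \in \mathcal{A}_{\lambda,\Lambda}$ attaining $\mathcal{M}^-(M) = \tr{A_0 M}$. Then
\begin{align*}
\mathcal{M}^-(M+N) \le \tr{A_0 M} + \tr{A_0 N} \le \mathcal{M}^-(M) + \mathcal{M}^+(N).
\end{align*}
The only step with real content is the variational characterization in the first paragraph. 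It reduces to a linear optimization over the diagonal entries $B_{ii} \in [\lambda,\Lambda]$, because conjugation by $O$ maps $\mathcal{A}_{\lambda,\Lambda}$ onto itself. The needed fact is that every diagonal entry of a symmetric matrix lies between its smallest and largest eigenvalues.
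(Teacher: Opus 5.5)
Your proof is correct: you verify the representation $\mathcal{M}^-(M)=\inf_{A}\tr{AM}$ and $\mathcal{M}^+(M)=\sup_{A}\tr{AM}$ over symmetric $A$ with spectrum in $[\lambda,\Lambda]$ properly (diagonal entries of $OAO^T$ lie in $[\lambda,\Lambda]$, and the extremal choice is attained), and (1)--(4) then follow exactly as you write. The paper gives no proof of its own and simply cites Caffarelli--Cabr\'e, where these properties come from this same inf/sup characterization, so your argument is essentially the standard one being invoked.
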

We also recall the definition of the inequalities \eqref{PDE} in the viscosity sense, see \cite{Ishii92, Caffarelli95}.
\begin{defn}
	We say that $u \in C(\Omega)$ satisfies 
	$$\mathcal{M}^\pm_{\lambda,\Lambda} \left( \diag{|D_iu|^{\frac{p_i-2}{2}}} D^2u \diag{|D_iu|^{\frac{p_i-2}{2}}} \right) \pm\mu\sum_i|D_iu|^{p_i-1} \leq c_0,  \quad\text{in}\ \Omega \quad (\text{resp.} \geq-c_0) $$
	in the viscosity sense, if for any $x_0 \in \Omega$ and any test function $\psi \in C^2(\Omega)$ such that $u-\psi$ has a local minimum (resp. maximum) at $x_0$, then
	$$\mathcal{M}^\pm_{\lambda,\Lambda} \left( \diag{|D_i\psi(x_0)|^{\frac{p_i-2}{2}}} D^2\psi(x_0) \diag{|D_i\psi(x_0)|^{\frac{p_i-2}{2}}} \right) \pm\mu\sum_i|D_i\psi(x_0)|^{p_i-1} \leq c_0  \quad (\text{resp.} \geq-c_0).$$ 
    Throughout the paper, when $p_i=2$ we interpret
$|q_i|^{(p_i-2)/2}$ as $1$, including at $q_i=0$.
\end{defn}
Since the equation \eqref{PDE} is nonhomogeneous, we need to employ the following intrinsic scaling framework.
\begin{rmk}[Scaling] \label{scal}
    For $r,M>0$, if $u \in C(K_r(M))$ satisfies
    \begin{align} \label{scalrmkpde}
        \mathcal{M}^- \left( \diag{|D_iu|^{\frac{p_i-2}{2}}} D^2u \diag{|D_iu|^{\frac{p_i-2}{2}}} \right) -\mu\sum_i|D_iu|^{p_i-1} \leq c_0 \quad \text{ in }K_r(M),
    \end{align}
    then the function $v \in C(K_1(1))$ defined as
    \begin{align*}
        v(x) = \frac{u(r^{\alpha_i}M^{\beta_i} x_i)}{M} \quad \text{ with } \quad  \alpha_i = \frac{1}{p_i}, \quad \beta_i = -\left( \frac{p_n-p_i}{p_i} \right),
    \end{align*}
    satisfies the following inequality
    \begin{align*}
        \mathcal{M}^- \left( \diag{|D_iv|^{\frac{p_i-2}{2}}} D^2v \diag{|D_iv|^{\frac{p_i-2}{2}}} \right) -\mu\sum_i \frac{r^{1/p_i}}{M^{(p_n-p_i)/p_i}}|D_iv|^{p_i-1} \leq \frac{r}{M^{p_n-1}}c_0 \quad \text{ in }K_1(1).
    \end{align*}
    In particular, when $r\leq1$ and $M\geq1$, $v$ also satisfies the same inequality \eqref{scalrmkpde} in $K_1(1)$.
    Moreover, if $c_0=\mu=0$, then $v$ satisfies the same inequality for any $r,M>0$.
    By choosing $r>0$ sufficiently small, we can assume that $c_0$ and $\mu$ are arbitrarily small.
    
\end{rmk}
We recall a comparison principle for viscosity solutions, see \cite{Ishii92}.
\begin{lem}[Comparison principle] \label{comp}
    Let $F=F(r,p,X) \in C(\R \times \R^n \times S(n))$ be degenerate elliptic and strictly decreasing in $r$.
    More precisely, assume:
    \begin{enumerate}
        \item $F(r,p,X) \leq F(r,p,Y)$ for any $X \leq Y$.
        \item There exists $\gamma>0$ such that $\gamma(r-s) \leq F(s,p,X)- F(r,p,X)$ for any $s \leq r$. 
    \end{enumerate}
    Then $F$ satisfies the comparison principle: if $u \in USC(\overline{\Omega})$ and $v \in LSC(\overline{\Omega})$ are viscosity subsolutions and supersolutions, respectively, such that $F(v,Dv,D^2v) \leq 0 \leq F(u,Du,D^2u)$ in $\Omega$, and $u \leq v$ on $\partial \Omega$, then $u \leq v$ in $\Omega$.
\end{lem}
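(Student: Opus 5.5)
This is the classical comparison principle of Crandall--Ishii--Lions, and I would prove it by the standard doubling-of-variables argument. Argue by contradiction. Suppose $\sigma:=\sup_{\Omega}(u-v)>0$. Since $u-v$ is upper semicontinuous on the compact set $\overline{\Omega}$, this supremum is attained. Because $u\le v$ on $\partial\Omega$, every maximizer is an interior point. For $\varepsilon>0$ consider
\[
\Phi_\varepsilon(x,y)=u(x)-v(y)-\frac{1}{2\varepsilon}|x-y|^2 \quad \text{on } \overline{\Omega}\times\overline{\Omega}.
\]
Let $(x_\varepsilon,y_\varepsilon)$ be a maximum point. The standard penalization lemma gives the following along a subsequence:
\begin{itemize}
\item $|x_\varepsilon-y_\varepsilon|^2/\varepsilon\to0$;
\item $(x_\varepsilon,y_\varepsilon)\to(\hat x,\hat x)$, where $\hat x$ maximizes $u-v$;
\item $u(x_\varepsilon)\to u(\hat x)$ and $v(y_\varepsilon)\to v(\hat x)$.
\end{itemize}
In particular $x_\varepsilon,y_\varepsilon\in\Omega$ for $\varepsilon$ small, and $u(x_\varepsilon)-v(y_\varepsilon)\ge\sigma$.

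The key step is the Theorem of Sums (Ishii's lemma). Set $p_\varepsilon=(x_\varepsilon-y_\varepsilon)/\varepsilon$. The lemma provides $X,Y\in S(n)$ with $(p_\varepsilon,X)\in\overline{J}^{2,+}u(x_\varepsilon)$ and $(p_\varepsilon,Y)\in\overline{J}^{2,-}v(y_\varepsilon)$, together with the matrix inequality
\[
-\frac{3}{\varepsilon}\begin{pmatrix} I_n&0\\0&I_n\end{pmatrix}\le\begin{pmatrix}X&0\\0&-Y\end{pmatrix}\le\frac{3}{\varepsilon}\begin{pmatrix}I_n&-I_n\\-I_n&I_n\end{pmatrix}.
\]
Testing the right inequality with vectors $(\xi,\xi)$ gives $X\le Y$. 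Since $F$ is continuous, the viscosity inequalities extend to the closures of the semijets. Hence
\[
F(u(x_\varepsilon),p_\varepsilon,X)\ge0\ge F(v(y_\varepsilon),p_\varepsilon,Y).
\]

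Now combine the two structural assumptions on $F$. Write $r=u(x_\varepsilon)$ and $s=v(y_\varepsilon)$, so that $r-s\ge\sigma$. Degenerate ellipticity (1) and $X\le Y$ give $F(r,p_\varepsilon,X)\le F(r,p_\varepsilon,Y)$. Strict monotonicity (2) gives $F(r,p_\varepsilon,Y)\le F(s,p_\varepsilon,Y)-\gamma(r-s)$. Chaining these,
\[
0\le F(r,p_\varepsilon,X)\le F(s,p_\varepsilon,Y)-\gamma\sigma\le-\gamma\sigma<0,
\]
which is a contradiction.

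Because $F$ here has no $x$-dependence, no continuity modulus in $x$ is needed, and the estimate $|x_\varepsilon-y_\varepsilon|^2/\varepsilon\to0$ is used only to keep the points interior. The main technical obstacle is the Theorem of Sums itself. I would quote it from \cite{Ishii92} rather than reprove it; its proof requires Jensen's lemma and sup-convolution approximations.
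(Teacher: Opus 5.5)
Your proof is correct and takes essentially the same approach as the paper. The paper gives no proof of its own and simply cites \cite{Ishii92}, where the statement (Theorem 3.3 there, written for $G=-F$) is proved by exactly this doubling of variables, with the Theorem of Sums giving $X\le Y$ and the strict monotonicity in $r$ forcing $0\le-\gamma\sigma$.

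The one point to state explicitly is that you use compactness of $\overline{\Omega}$, i.e.\ that $\Omega$ is bounded. The lemma assumes this tacitly, and it is necessary: for $F=X-r$ on $(0,\infty)$, $u=\sinh x$ and $v=0$ violate the conclusion. Boundedness does hold in the paper's application to $K_{R_1}(m_0)\setminus K_{r_0}(L_0m_0)$.
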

We conclude this section by stating the Vitali covering lemma for intrinsic cubes.
\begin{lem} \label{Vitalilem}
    Fix $M>0$ and let $\mathcal{F} = \{K_i=K_{r_i}(M,x_i)\}$ be a collection of intrinsic cubes with  $x_i \in \R^n$ and $r_i \in (0,1]$.
    Then there exists a countable subcollection $\mathcal{G} = \{K_j\} \subset \mathcal{F}$ of pairwise disjoint cubes such that
    \begin{align*}
        \bigcup_{K_i \in \mathcal{G}} 5K_i \supset \bigcup_{K_i \in \mathcal{F}} K_i.
    \end{align*}
\end{lem}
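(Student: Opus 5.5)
The plan is to reduce the statement to the classical Vitali covering lemma for centered balls in a metric space, by showing that the intrinsic cubes $K_r(M,x)$ with $M$ fixed are exactly the balls of a suitable quasi-metric. Once that identification is in place, the constant $5$ should come out of the usual argument with essentially no extra work.

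Fix $M>0$ and set, for $x,y\in\R^n$,
\begin{align*}
d(x,y) := \max_i \left( \frac{|x_i-y_i|}{M^{\beta_i}} \right)^{p_i}.
\end{align*}
Since $\alpha_i = 1/p_i$, we have
\begin{align*}
|x_i-(x_0)_i|<r^{\alpha_i}M^{\beta_i} \iff \left(\frac{|x_i-(x_0)_i|}{M^{\beta_i}}\right)^{p_i}<r,
\end{align*}
so $K_r(M,x_0)=\{d(x,x_0)<r\}$. Similarly, $aK_r(M,x_0)=\{d(x,x_0)<a^{p_i}r \text{ componentwise}\}$. The main point to check is the quasi-triangle inequality. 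For each $i$,
\begin{align*}
\left(\frac{|x_i-z_i|}{M^{\beta_i}}\right)^{p_i}\le \left(d(x,y)^{1/p_i}+d(y,z)^{1/p_i}\right)^{p_i}.
\end{align*}

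I will not pass through $d$ abstractly, since the constants $2^{p_i}$ are awkward. Instead I will argue directly with the cube geometry and run the greedy selection by hand. The selection goes as follows.
\begin{enumerate}
\item All radii lie in $(0,1]$. Choose $K_{j_1}\in\F$ with $r_{j_1}>\tfrac12\sup_{\F} r_i$.
\item Inductively, choose $K_{j_{k+1}}$ disjoint from $K_{j_1},\dots,K_{j_k}$ with radius greater than half the supremum of the radii of the remaining cubes disjoint from those already chosen.
\item If the resulting collection is infinite and some cubes have radii bounded below, local finiteness is needed. More precisely, disjoint cubes of comparable size inside a bounded region are finitely many, because $|K_r(M)|=2^n r^{\sum\alpha_i}M^{\sum\beta_i}$. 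This is handled in the usual way, by partitioning $\F$ into dyadic radius classes $r\in(2^{-k-1},2^{-k}]$ and taking a maximal disjoint subfamily class by class, using Zorn's lemma or countability.
\end{enumerate}

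The key step is the engulfing property. Suppose $K_{r}(M,x)\cap K_{s}(M,y)\neq\emptyset$ with $r\le 2s$. For each $i$, pick a common point $z$. Then
\begin{align*}
|x_i-y_i|+r^{\alpha_i}M^{\beta_i} \le s^{\alpha_i}M^{\beta_i}+2r^{\alpha_i}M^{\beta_i} \le (1+2\cdot2^{\alpha_i})s^{\alpha_i}M^{\beta_i}.
\end{align*}
Since $p_i\ge2$, we have $\alpha_i\le\tfrac12$, so $2^{\alpha_i}\le\sqrt2$ and $1+2\sqrt2<5$. Hence $K_r(M,x)\subset 5K_s(M,y)$, where $5K$ means dilation of the side lengths by $5$ about the center, exactly as in the definition of $aK_r(M,x_0)$.

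With these pieces, every $K\in\F$ meets some selected cube whose radius is at least half of the radius of $K$. This is by maximality within its dyadic class, or an earlier class with larger radii. The engulfing estimate then gives $K\subset 5K_j$, and countability follows from the disjointness of open sets in $\R^n$.

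The only step requiring care is this engulfing estimate, in particular the use of $\alpha_i\le 1/2$ to keep the constant below $5$. The fact that $M$ is fixed is essential here: it makes the factors $M^{\beta_i}$ identical for both cubes, so they cancel.
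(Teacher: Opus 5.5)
Your proof is correct and, once the quasi-metric and greedy-selection detours are set aside, it is the paper's argument. Both use dyadic radius classes $2^{-k-1}<r\le 2^{-k}$, choose maximal disjoint subfamilies class by class among cubes disjoint from earlier selections, and conclude with the engulfing estimate under $r_l\le 2r_m$. One small difference: the paper needs only $\alpha_i\le 1$ (giving $2\cdot 2+1=5$, with strict inequality from the open cubes), so your appeal to $\alpha_i\le\tfrac12$ is harmless but not needed.
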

\begin{proof}
We may assume that $\mathcal{F}$ is countable since $\R^n$ is Lindel\"of space.
For $k \geq 0$, we define the partition of $\mathcal{F}$ as
\begin{align*}
    \F_k = \{ K_i \in \F : 2^{-k-1} < r_i \leq 2^{-k} \}.
\end{align*}
We now construct the desired subfamily $\mathcal{G}$ inductively.
We define $\tilde{\F}_0 \subset\F_0$ as a maximal family of disjoint cubes in $\F_0$.
We also define $\tilde{\F}_k\subset\F_k$ for $k>0$ inductively.
Assume that $\tilde{\F}_j$ have been defined for $0 \leq j \leq k-1$.
Then for a subfamily
\begin{align*}
    \mathcal{H}_k := \{ K_i \in \F_k : K_i \cap K = \emptyset \quad \text{ for any } K \in \bigcup_{j=0}^{k-1} \tilde{\F}_j \},
\end{align*}
we define $\tilde{\F}_k$ as a maximal family of disjoint cubes in $\mathcal{H}_k$.
Then we set $\mathcal{G} = \bigcup_{k=0}^{\infty}\tilde{\F}_k$.
By construction, the cubes in $  \mathcal{G}$ are pairwise disjoint.
To verify the covering property, for $K_l \in \F_k$ with some $k \geq 0$, we claim that $K_l  \subset \bigcup_{j=0}^{k} \bigcup_{K_m \in \tilde{\F}_j} 5K_m$.
Note that if $K_l \notin \tilde{\F}_k $, then there exists $K_m \in  \bigcup_{j=0}^{k} \tilde{\F}_j$ satisfying $K_l \cap K_m \neq \emptyset$ by maximality of $\tilde{\F}_k$.
Since $K_l \in \F_k $ implies $2^{-k-1} < r_l \leq 2^{-k}$ and $K_m \in  \bigcup_{j=0}^{k} \tilde{\F}_j$ implies $ 2^{-k-1}\leq r_m \leq 1$, we obtain $r_l \leq 2 r_m$.
We prove the claim by showing that $K_l \subset 5K_m$. For $x \in K_l$ and $y \in K_l \cap K_m$, we have
\begin{align*}
    |(x-x_m)_i| &\leq |(x-y)_i| +|(y-x_m)_i| 
    \leq 2r_l^{\alpha_i}M^{\beta_i} +r_m^{\alpha_i}M^{\beta_i} \leq 5r_m^{\alpha_i}M^{\beta_i}. 
\end{align*}
Thus, $x \in 5K_m$, which implies $K_l \subset 5K_m$.
This completes the proof.
\end{proof}
	
\section{Basic measure estimate} \label{sec3}
In this section we prove a basic measure estimate via the sliding-paraboloid method.
We adapt this approach by replacing standard quadratic paraboloids with a class of anisotropic test functions \eqref{anipara}, which is designed to match the anisotropic degeneracy of the operator.
However, the function defined in \eqref{anipara} fails to be $C^2$ on the coordinate hyperplanes $\bigcup_{\{i:p_i\neq2\}}\{x_i=0\}$.
To overcome this technical issue, we rely on the `restriction to slices' argument introduced in \cite{Kim26}, which reduces the analysis of the singular set to lower-dimensional coordinate slices where the test function remains smooth.

Note that the assumption \eqref{pcond} on $(p_i)$ is not required for the validity of the following lemma.

\begin{lem} \label{basicmeaslem}
    For any $\epsilon_0 >0$, there exist $r_0  \in (0,1)$ and $M_0>1$ depending only on $n,(p_i),\lambda,\Lambda$,  and $\delta_0  \in (0,1)$ depending additionally on $\epsilon_0$ such that
    if $u \in C(K_1(M_0))$ is nonnegative in $K_1(M_0)$ and satisfies
    \begin{align*}
		\mathcal{M}^- \left( \diag{|D_iu|^{\frac{p_i-2}{2}}} D^2u \diag{|D_iu|^{\frac{p_i-2}{2}}} \right) -\Lambda\sum|D_iu|^{p_i-1} \leq  \epsilon_0 \quad \text{ in } K_1(M_0),	
	\end{align*}
    with 
    \begin{align*}
        \inf_{K_{r_0}(1)}u \leq 1,
    \end{align*}
    then we have $$|\{u <M_0\} \cap K_1(M_0)\}| \geq \delta_0|K_1(M_0)|.$$
\end{lem}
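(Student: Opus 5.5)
We follow the sliding paraboloid method of Cabr\'e and Savin, with the anisotropic profile $\varphi(x)=\sum_i K_i|x_i|^{p_i/(p_i-1)}$ from \eqref{anipara} in place of quadratic paraboloids. Note first the geometry. $K_{r_0}(1)=\{|x_i|<r_0^{1/p_i}\}$ is a fixed small box. $K_1(M_0)=\{|x_i|<M_0^{-(p_n-p_i)/p_i}\}$ is very thin in the directions $i$ with $p_i<p_n$ but has side $1$ in the directions with $p_i=p_n$.

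To handle this, we slide the family $-\varphi(\cdot-y)+c$ from below. The vertices $y$ range over a set $V\subset K_{r_0}(1)$ (or a slightly smaller intrinsic box), and the $K_i$ are chosen so that the following holds. Since $u\ge0$ and $\inf_{K_{r_0}(1)}u\le1$, at the first contact point $x$ the value $u(x)$ is at most $1+\sup_{V,K_{r_0}(1)}\varphi(\cdot-y)<M_0$. On the other hand, since $u\ge0$, touching cannot occur near $\partial K_1(M_0)$: there $\varphi(\cdot-y)$ exceeds $1+\varphi$-oscillation on $K_{r_0}(1)$. This forces $K_i\,(M_0^{-(p_n-p_i)/p_i})^{p_i/(p_i-1)}\gtrsim 1$ while $K_i r_0^{1/(p_i-1)}\lesssim1$. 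We choose $K_i\sim M_0^{(p_n-p_i)/(p_i-1)}$ and restrict the vertices to $|y_i|\le c\,M_0^{-(p_n-p_i)/p_i}$, that is, to a small intrinsic box $cK_1(M_0)$. We then take $M_0$ large relative to these constants, so that the contact set $A$ lies in $\{u<M_0\}\cap K_1(M_0)$ at interior points.

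At a contact point $x$ (where $\varphi$ is $C^2$), the gradient map satisfies $D_i\varphi(x-y)=K_i\frac{p_i}{p_i-1}|x_i-y_i|^{1/(p_i-1)}\operatorname{sgn}$, so $|D_iu|^{p_i-1}\sim K_i^{p_i-1}|x_i-y_i|$. The key algebraic point of the anisotropic test function is that $|D_i\varphi|^{p_i-2}D_{ii}\varphi=\frac{p_i^{p_i-1}}{(p_i-1)^{p_i-1}}K_i^{p_i-1}$ is constant, with no singularity. Hence the viscosity inequality gives an upper bound on $D^2u$ from $\mathcal{M}^-$, using the lower bound $D^2u\ge -D^2\varphi$ and Proposition \ref{pucciprop}. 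Here $\epsilon_0$ and $\Lambda\sum|D_iu|^{p_i-1}$ are bounded by constants times $\sum K_i^{p_i-1}$. This yields a bound on $\det$ of the scaled Hessian, and therefore a bound on the Jacobian of the map $x\mapsto y$, which is $y=x+(D\varphi)^{-1}(Du(x))$ coordinatewise. The area formula then gives $|V|\le C|A|$, and since $|V|\ge c|K_1(M_0)|$ we obtain $\delta_0$. The constant $\delta_0$ depends on $\epsilon_0$ through the Hessian bound, while $r_0,M_0$ do not.

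The main obstacle is that $\varphi$ and the map $y(x)$ fail to be smooth when $x_i=y_i$ for indices with $p_i>2$. There the degenerate weight vanishes and the Hessian bound gives no control of $D_{ii}u$. We would handle this by the restriction-to-slices argument of \cite{Kim26}. We split $A$ according to the index set $I=\{i:x_i=y_i\}$. For each $I$, the corresponding vertices lie in a set whose $y_I$-components are determined by $x_I$, so the map factors through a lower-dimensional slice on which $\varphi$ is $C^2$. We apply the area formula on each slice in the remaining variables, and then use Fubini over the fixed coordinates to show that these vertices contribute at most a controlled measure. We would also first perturb $u$ to be semiconcave (inf-convolution) so that $u$ is punctually second-order differentiable a.e. on $A$, as is standard.
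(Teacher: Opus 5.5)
Your proposal is correct and follows essentially the paper's route: sliding anisotropic paraboloids with profile $|x_i|^{p_i/(p_i-1)}$, the vertex map $y_i = x_i + c_i|D_iu|^{p_i-2}D_iu$ with the area formula, and slicing over the coordinates where $x_j=y_j$ (as in \cite{Kim26}) after an inf-convolution. The only real difference is normalization: the paper takes coefficients $K^{1/(p_i-1)}$ with $K=M_0^{p_n-1}$, so the weighted Hessian of the paraboloid is exactly $K I_n$, $\mathcal{M}^-$ commutes with the rescaling, and the Jacobian bound $C(1+\epsilon_0)^n$ is independent of $M_0$; your choice $K_i\sim M_0^{(p_n-p_i)/(p_i-1)}$ instead makes $\Theta=\operatorname{diag}(\kappa_i)$ non-scalar, so you must bound $\det(\tilde X+\Theta)/\det\Theta$ (with $\tilde X$ the weighted Hessian of $u$) via $\lambda\operatorname{tr}(\tilde X+\Theta)\le \mathcal{M}^-(\tilde X)+\Lambda\operatorname{tr}\Theta$, which costs an $M_0$-dependent factor---harmless here since $M_0$ is universal.
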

\begin{proof}
    Using the inf convolution of $u$,
	we may assume that $u$ is semiconcave and twice differentiable almost everywhere (see \cite{Caffarelli95, Imbert16}). 
    Given that $\inf_{K_{r_0}(1)}u \leq 1$, there exists a point $x_0 \in \overline{K_{r_0}(1)}$ such that $u(x_0) \leq 1$.
    We now apply the sliding paraboloid method using the following `anisotropic paraboloid' defined as
    \begin{align} \label{anipara}
        \varphi(x) = -\sum_i K^{\frac{1}{p_i-1}} (p_i-1)^{\frac{1}{p_i-1}} \frac{p_i-1}{p_i} |x_i|^{\frac{p_i}{p_i-1}},
    \end{align}
    where $K \geq 1$ is a constant to be determined.
    Observe that $\frac{1}{2}\leq(p_i-1)^{\frac{1}{p_i-1}} \frac{p_i-1}{p_i}\leq 2 $.
    We define $V=K_{r_0}(1)$ as the vertex set, with $r_0<1$ to be determined.
    We slide the paraboloid with a vertex in $V$ from below until it touches the graph of $u$.
    The touching point set is then defined as
    \begin{align*}
		T=\{x\in\overline{K_{1}(M_0)}: \exists y \in V \text{ such that } u(x) - \varphi(x-y) = \min_{z\in \overline{K_{1}(M_0)}} u(z)-\varphi(z-y)  \}.
	\end{align*}
    For $y \in V$ and $z \in \partial K_1(M_0)$, we have 
    \begin{align*}
        u(z)-\varphi(z-y) &\geq \frac{1}{2}\min_i \{K^{\frac{1}{p_i-1}}  |M_0^{-\frac{p_n-p_i}{p_i}}-r_0^{\frac{1}{p_i}}|^{\frac{p_i}{p_i-1}}\} \\
        &\geq \frac{1}{4}\min_i \{K^{\frac{1}{p_i-1}}  M_0^{-\frac{p_n-p_i}{p_i-1}}\} = \frac{1}{4} M_0,
    \end{align*}
    by choosing $K=M_0^{p_n-1}$ and $r_0 \leq M_0^{-(p_n-p_1)}A^{-p_n}$ with a sufficiently large constant $A \geq 2$, which implies $|M_0^{-\frac{p_n-p_i}{p_i}}-r_0^{\frac{1}{p_i}}| \geq \frac{1}{2}M_0^{-\frac{p_n-p_i}{p_i}}$.
    Conversely, for $y \in V$ and $x_0 \in \overline{K_{r_0}(1)}$, we have
    \begin{align*}
        u(x_0)-\varphi(x_0-y) &\leq 1+2\sum_i K^{\frac{1}{p_i-1}} |2r_0^{\frac{1}{p_i}}|^{\frac{p_i}{p_i-1}} \\
        &\leq 1+8 \sum_i(Kr_0)^{\frac{1}{p_i-1}} \\
        &\leq 1+ 8 \sum_i (M_0^{p_1-1}A^{-p_n})^{\frac{1}{p_i-1}} \\
        &\leq 1+8n M_0A^{-\frac{p_n}{p_n-1}} < \frac{1}{4}M_0,
    \end{align*}
    by choosing large $A > 64n$ and $M_0 >8$.
    These estimates imply that the minimum must be attained at an interior point.
    Therefore, we have $T \subset \{  u <M_0\} \cap K_1(M_0)$.

    For each $x\in T$ there is a unique vertex point $y\in V$ such that $\varphi(z-y)+C$ touches $u$ from below at $x$, so we define the mapping $m :T\rightarrow V$ as $m(x)=y$.
    We also have
    \begin{align} \label{grad}
		D_iu(x)&= D_i\varphi(x-y) = -(K(p_i-1))^{\frac{1}{p_i-1}}|x_i-y_i|^{\frac{1}{p_i-1}} \frac{x_i-y_i}{|x_i-y_i|}, \\
		D^2u(x) &\geq D^2\varphi(x-y) = -\diag{K^{\frac{1}{p_i-1}}(p_i-1)^{-\frac{p_i-2}{p_i-1}}|x_i-y_i|^{-\frac{p_i-2}{p_i-1}}}. \label{hess}
	\end{align}
    Note that $D^2\varphi(x-y)$ is not well-defined whenever $p_i\neq2$ and $x_i=y_i$ for some coordinate $i$.
    Using \eqref{grad}, the map $m$ can be written explicitly as
    \begin{align} \label{mequ}
        y_i = x_i + \frac{1}{K(p_i-1)}|D_iu|^{p_i-2}D_iu(x).
    \end{align}
    We now decompose the touching point set $T$ according to whether each coordinate direction is degenerate or non-degenerate.
    Let $I_2 :=\{i:p_i=2\}$. 
    For $ I_2\subset I \subset \{1,\cdots,n\} $ and $\epsilon>0$, we define
		\begin{align}  \label{defT}
			\begin{split}
				T^{\epsilon}_{I} &= \{ x \in T : |x_i-y_i| >\epsilon \ \text{ for }\ i\in I \setminus I_2, \ \ |x_j-y_j|=0 \ \text{ for }\ j\notin I\},\\
				T_{I} &= \{ x \in T : |x_i-y_i| >0 \ \text{ for }\ i\in I\setminus I_2, \ \ |x_j-y_j|=0 \ \text{ for }\ j\notin I\},
			\end{split}
		\end{align}
		where $y=m(x)$ and define $V^{\epsilon}_{I} = m(T^{\epsilon}_{I})$ and $V_{I}=m(T_I)$.
		Then for $0<\epsilon_1 < \epsilon_2$, we have
		\begin{align} \label{TV}
			\begin{split}
				T^{\epsilon_2}_I \subset T^{\epsilon_1}_I, \quad T_I = \bigcup_{\epsilon>0} T^\epsilon_I, \quad T=\bigsqcup_{I_2 \subset I \subset \{1,\cdots,n\}} T_I,\\
				V^{\epsilon_2}_I \subset V^{\epsilon_1}_I, \quad V_I = \bigcup_{\epsilon>0} V^\epsilon_I, \quad V=\bigcup_{I_2 \subset I \subset \{1,\cdots,n\}} V_I.
			\end{split}
		\end{align}
    Our aim is to prove $|V_I| \leq C|T_I|$ for each $I_2 \subset I \subset \{1,\cdots,n\}$ by applying the area formula.

    \textbf{ Case 1 : $I = \{1,\cdots,n\}$.}
	For $x \in T_I^\epsilon$ and $y=m(x)$, there exists a paraboloid $\varphi(z-y)+C_0$ that touches $u$ from below at $z=x$.
	Moreover, since $|x_i-y_i| >\epsilon$ for every $i \in \{1,\cdots,n\}\setminus I_2$, $\varphi(z-y)$ is smooth at $z=x$ with $|D^2\varphi(x-y)| \leq C\epsilon^{-2}$.
	Therefore, for any $z \in K_1(M_0)$, we get
	\begin{align}
		u(z) &\geq \varphi(z-y) +C_0 \nonumber \\
		&\geq \varphi(x-y) + C_0+ \langle D\varphi(x-y),z-x\rangle - C\epsilon^{-2}|z-x|^2 \nonumber \\
		&=u(x) + \langle Du(x),z-x\rangle - C\epsilon^{-2}|z-x|^2, \label{touchbelow}
	\end{align}
	by the Taylor expansion of $\varphi(\cdot-y)$ at $x$, the fact that $u(x)=\varphi(x-y)+C_0$, and \eqref{grad}.
		
	Let $x_1, x_2 \in T_I^\epsilon $ and set $y_1 =m(x_1)$ and $ y_2=m(x_2)$.
	For any $z \in B_r(x_1)$ with $r=2|x_1-x_2|$, we have
	\begin{align*}
		u(z) & \geq u(x_1) + \langle Du(x_1),z-x_1 \rangle - C\epsilon^{-2}|z-x_1|^2 \\
		& \geq u(x_2) + \langle Du(x_2),x_1-x_2 \rangle - C\epsilon^{-2}|x_1-x_2|^2 + \langle Du(x_1),z-x_1 \rangle - C\epsilon^{-2}|z-x_1|^2,
	\end{align*}
	by applying \eqref{touchbelow} at $x_1$ and $x_2$.
	By the semiconcavity of $u$, there exists $\tilde{C}>0$ such that
	\begin{align*}
		u(z) \leq u(x_2) + \langle Du(x_2),z-x_2 \rangle +\tilde{C}|z-x_2|^2.
	\end{align*}
	Using the two inequalities above, we obtain
	\begin{align*}
		\langle Du(x_1)-Du(x_2),z-x_1 \rangle \leq (C\epsilon^{-2}+\tilde{C})r^2.
	\end{align*}
	Choosing $z$ such that $z-x_1$ is parallel to $Du(x_1)-Du(x_2)$, we obtain
	\begin{align*}
		|Du(x_1)-Du(x_2)| \leq (C\epsilon^{-2}+\tilde{C})|x_1-x_2|.
	\end{align*}
	Therefore, we conclude that $Du$ is Lipschitz in $T_I^\epsilon$, and the area formula can be applied to the map $m$.
    By differentiating \eqref{mequ} with respect to $x$, we obtain
    \begin{align*}
        D_xy= I_n + \frac{1}{K}\diag{|D_iu|^{p_i-2}}D^2u.
    \end{align*}
    By the identity $\det (I_n+MN)=\det(I_n+NM)$ for $n\times n$ matrices $M$ and $N$, we have
    \begin{align*}
        \det D_xy&= \det \left(I_n + \frac{1}{K}\diag{|D_iu|^{p_i-2}}D^2u \right) \\
        &= \det \left(I_n + \frac{1}{K}\diag{|D_iu|^{\frac{p_i-2}{2}}}D^2u \diag{|D_iu|^{\frac{p_i-2}{2}}} \right)=: \det B.
    \end{align*}
    A direct calculation using \eqref{grad} and \eqref{hess} shows that
    \begin{align*}
        -\frac{1}{K}\diag{|D_i\varphi|^{\frac{p_i-2}{2}}}D^2\varphi \diag{|D_i\varphi|^{\frac{p_i-2}{2}}} =I_n ,
    \end{align*}
    which implies
    \begin{align*}
        B &=I_n + \frac{1}{K}\diag{|D_iu|^{\frac{p_i-2}{2}}}D^2u \diag{|D_iu|^{\frac{p_i-2}{2}}} \\
        & =\frac{1}{K}\diag{|D_iu|^{\frac{p_i-2}{2}}}(D^2u-D^2\varphi) \diag{|D_iu|^{\frac{p_i-2}{2}}} \geq0.
    \end{align*}
    Equation \eqref{grad} also gives $ |D_iu|^{p_i-1} \leq 2K(p_i-1)$.
    Therefore,
    \begin{align*}
        \lambda \tr{B} &= \mathcal{M}^-(B) \\
        & \leq \mathcal{M}^{+}(I_n) + \frac{1}{K}\mathcal{M}^{-}\left(\diag{|D_iu|^{\frac{p_i-2}{2}}}D^2u \diag{|D_iu|^{\frac{p_i-2}{2}}}\right) \\
        &\leq n\Lambda + \frac{1}{K}(\epsilon_0 + \Lambda\sum|D_iu|^{p_i-1}) \\
        &\leq C(1+\epsilon_0),
    \end{align*}
    where $C$ depends only on $n,(p_i),\lambda$ and $\Lambda$.
    Consequently,
    \begin{align*}
        \det D_xy = \det B \leq \left( \frac{\tr{B}}{n} \right)^n \leq C(1+\epsilon_0)^n.
    \end{align*}
    Applying the area formula to $m$, we obtain
    \begin{align*}
        |V_I^\epsilon| \leq \int_{T^\epsilon_I} |\det D_xy| \ dx  \leq \int_{T^\epsilon_I} C(1+\epsilon_0)^n \, dx \leq C(1+\epsilon_0)^n |T^\epsilon_I|.
    \end{align*}
    Letting $\epsilon \rightarrow0$ yields $|V_I| = \lim_{\epsilon\rightarrow0}|V_I^\epsilon| \leq \lim_{\epsilon\rightarrow0}C(1+\epsilon_0)^n |T^\epsilon_I| \leq C(1+\epsilon_0)^n |T_I|.$

    \textbf{ Case 2 : $I_2\subset I \subsetneq \{1,\cdots,n\}$ and $I \neq \emptyset$.} 
	Let $k=|I|$ and $J = \{1,\cdots,n\} \setminus I$.
    For $x \in T^\epsilon_I$, we have $D_ju(x)=0$ for $j \in J$ by \eqref{mequ}.
	Let $(a_j)_{j\in J}$ be $(n-k)$ numbers satisfying $a_j \in [-M_0^{\beta_j},M_0^{\beta_j}]$.
	We define the $k$-dimensional slice 
	$$H(a_j) = \{z:z_j=a_j \text{ for  } {j \in J}\} \cap K_1(M_0),$$
    and set $T^{\epsilon}_{I}(a_j) = H(a_j) \cap T^\epsilon_I $ and $V^{\epsilon}_{I}(a_j) =H(a_j) \cap V^\epsilon_I$.
    Let $u_{(a_j)}, \varphi_{(a_j)}$ and $m_{(a_j)}$ be the restriction of $u,\varphi,m$ to $H(a_j)$, respectively.
    Note that for $x \in T^\epsilon_I(a_j)$, we have $y_j=x_j=a_j$ for $y=m(x)$ and $j \in J$, which implies $y \in V^{\epsilon}_{I}(a_j)$.
    Therefore, the restricted map $m_{(a_j)} : T^{\epsilon}_{I}(a_j) \rightarrow V^{\epsilon}_{I}(a_j)$ is well defined with
		\begin{align} \label{mequ2}
			y_i =x_i + \frac{1}{K(p_i-1)}|D_iu_{(a_j)}|^{p_i-2}D_iu_{(a_j)}(x).
		\end{align}
    Moreover, since $\varphi_{(a_j)}(z-y)+C_0$ touches $u_{(a_j)}$ from below at $z=x$, we also obtain
		\begin{align} \label{gradhessequ}
			Du_{(a_j)}(x)= D\varphi_{(a_j)}(x-y) \in \mathbb{R}^{k}, \quad D^2u_{(a_j)}(x) \geq D^2\varphi_{(a_j)}(x-y)\in S(k).
		\end{align}
    Observe that since $|x_i-y_i|>\epsilon$ for any $i\in I\setminus I_2$, $\varphi_{(a_j)}(z-y)$ is smooth at $z=x$ and $D^2\varphi_{(a_j)}(x-y)$ is well defined with $|D^2\varphi_{(a_j)}(x-y)| \leq C\epsilon^{-2}$.
    Moreover, $u_{(a_j)}$ is also semiconcave.
    Therefore, by the same argument as in \textbf{Case 1}, we conclude that $Du_{(a_j)}$ and $m_{(a_j)}$ are Lipschitz in $T^{\epsilon}_{I}(a_j)$.
   
    By differentiating \eqref{mequ2} with respect to $x$, we obtain
    \begin{align*}
        D_xy= I_k + \frac{1}{K}\diag{|D_iu_{(a_j)}|^{p_i-2}}D^2u_{(a_j)}.
    \end{align*}
    Using the fact that
     \begin{align*}
        -\frac{1}{K}\diag{|D_i\varphi_{(a_j)}|^{\frac{p_i-2}{2}}}D^2\varphi_{(a_j)} \diag{|D_i\varphi_{(a_j)}|^{\frac{p_i-2}{2}}} =I_k ,
    \end{align*}
    and by the same argument as in \textbf{Case 1}, we get
    \begin{align*}
        \det D_xy&= \det \left(I_k + \frac{1}{K}\diag{|D_iu_{(a_j)}|^{\frac{p_i-2}{2}}}D^2u_{(a_j)}\diag{|D_iu_{(a_j)}|^{\frac{p_i-2}{2}}} \right) \\
        &= \det \left( \frac{1}{K}\diag{|D_iu_{(a_j)}|^{\frac{p_i-2}{2}}}(D^2u_{(a_j)}-D^2\varphi_{(a_j)}) \diag{|D_iu_{(a_j)}|^{\frac{p_i-2}{2}}} \right)=: \det B,
    \end{align*}
    and $B \geq 0$ using \eqref{gradhessequ}.
    Therefore,
    \begin{align*}
        \lambda \tr{B} &= \mathcal{M}^-(B) \\
        & \leq \mathcal{M}^{+}(I_k) + \frac{1}{K}\mathcal{M}^{-}\left(\diag{|D_iu_{(a_j)}|^{\frac{p_i-2}{2}}}D^2u_{(a_j)} \diag{|D_iu_{(a_j)}|^{\frac{p_i-2}{2}}}\right) \\
        &\leq k\Lambda +\frac{1}{K}\mathcal{M}^{-}\left(\diag{|D_iu|^{\frac{p_i-2}{2}}}D^2u \diag{|D_iu|^{\frac{p_i-2}{2}}}\right) \\
        & \leq k\Lambda + \frac{1}{K}(\epsilon_0 + \Lambda\sum|D_iu|^{p_i-1}) \\
        &\leq C(1+\epsilon_0)
    \end{align*}
    where we used $D_j u(x)=0$ for any $j\in J$ and $x\in T_I^\epsilon$.
    Thus, we get
    \begin{align*}
        \det D_xy = \det B \leq \left( \frac{\tr{B}}{k}\right)^k \leq C(1+\epsilon_0)^k.
    \end{align*}
    Applying Fubini's theorem and the area formula to $m_{(a_j)}$, we obtain
    \begin{align*}
		|V^\epsilon_I|&= \int_{\prod_{j\in J}[-M_0^{\beta_j},M_0^{\beta_j}]}|V_I^\epsilon(a_j)| \  da_J  \\
		&\leq \int_{\prod_{j\in J}[-M_0^{\beta_j},M_0^{\beta_j}]}\int_{T^\epsilon_I(a_j)} |\det D_xy| \ dx_I da_J \\
		& \leq \int_{\prod_{j\in J}[-M_0^{\beta_j},M_0^{\beta_j}]}\int_{T^\epsilon_I(a_j)} C(1+\epsilon_0)^{k} \ dx_I da_J \\
		& = \int_{T^\epsilon_I} C(1+\epsilon_0)^{k} \ dx \\
		&\leq C\left(1+ \epsilon_0\right)^k|T^\epsilon_I|.
    \end{align*}
    Letting $\epsilon \rightarrow0$, we conclude that $|V_I| = \lim_{\epsilon\rightarrow0}|V_I^\epsilon| \leq \lim_{\epsilon\rightarrow0}C\left(1+ \epsilon_0\right)^k|T^\epsilon_I| \leq C\left(1+ \epsilon_0\right)^k|T_I|$.

    \textbf{ Case 3 : $I= I_2= \emptyset$.} If $x \in T_\emptyset$,  we have $y=x$ where $y \in V_\emptyset$ is the corresponding vertex.
	Thus we have $V_\emptyset = T_\emptyset$.

    \textbf{Conclusion.}
		Combining all the previous results, we have $$|V_I| \leq C\left(1+ \epsilon_0\right)^n|T_I|\quad \text{ for every }I_2 \subset I \subset \{1,\cdots,n\}.$$
		Using \eqref{TV}, we obtain
		\begin{align*}
			|K_{r_0}(1)|=|V| \leq \sum_{I_2 \subset I \subset \{1,\cdots,n\}} |V_I| 
            \leq \sum_{I_2 \subset I \subset \{1,\cdots,n\}} C\left(1+ \epsilon_0\right)^n|T_I| = C\left(1+ \epsilon_0\right)^n|T|.
		\end{align*}
    Since $T \subset \{u<M_0\} \cap K_1(M_0)$, it follows that
    \begin{align*}
        |\{u<M_0\} \cap K_1(M_0)| \geq |T| \geq \frac{|K_{r_0}(1)|}{C(1+\epsilon_0)^n} = \delta_0|K_1(M_0)|
    \end{align*}
    for $\delta_0 := \frac{|K_{r_0}(1)|}{C(1+\epsilon_0)^n|K_1(M_0)|} \in (0,1)$.
    This proves the lemma.
\end{proof}

\section{A barrier function} \label{sec4}
In this section, we construct an explicit barrier function under the assumption \eqref{pcond} on the exponents $(p_i)$.
We emphasize that this is the only part of the paper where the condition \eqref{pcond} is used.
Consequently, if one were able to construct a suitable barrier function under a more general assumption on the exponents $(p_i)$, the same proof strategy would immediately yield the intrinsic Harnack inequality under that weaker condition.
As a preliminary step, we prove a simple auxiliary lemma that will play a key role in the construction of the barrier function.
\begin{lem} \label{simple}
    For any $k_i >0$, $h_i>0$ and $\tau_i \geq0$ for $i\in \{1,\cdots,n\}$ with $\sum_{i=1}^n\tau_i=1$, the following inequality holds:
    \begin{align*}
        \sum_{i=1}^n \frac{1}{h_i^{k_i}} \tau_i^{k_i}(\tau_i-h_i) \geq 1-\sum_{i=1}^nh_i.
    \end{align*}
\end{lem}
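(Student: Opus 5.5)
The inequality follows term by term, because the right-hand side can be rewritten in the same form as the left. Since $\sum_{i=1}^n \tau_i = 1$, we have
\begin{align*}
    1-\sum_{i=1}^n h_i = \sum_{i=1}^n (\tau_i - h_i).
\end{align*}
It therefore suffices to prove that for each fixed $i$,
\begin{align*}
    \left(\frac{\tau_i}{h_i}\right)^{k_i}(\tau_i-h_i) \geq \tau_i - h_i,
\end{align*}
and then sum over $i$. No earlier result of the paper is needed; only the sign of $\tau_i-h_i$ and the monotonicity of $s\mapsto s^{k_i}$ on $[0,\infty)$ are used.

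Fix $i$ and set $t=\tau_i\geq 0$, $h=h_i>0$, $k=k_i>0$. I split into two cases according to the sign of $t-h$.
\begin{itemize}
\item If $t\geq h$, then $t/h\geq 1$, so $(t/h)^k\geq 1$. Multiplying by the nonnegative quantity $t-h$ gives $(t/h)^k(t-h)\geq t-h$.
\item If $0\leq t<h$, then $0\leq (t/h)^k<1$ and $t-h<0$. Multiplying the inequality $(t/h)^k\leq 1$ by the negative number $t-h$ reverses it, so again $(t/h)^k(t-h)\geq t-h$.
\end{itemize}
The boundary case $\tau_i=0$ falls under the second case: the left-hand term is $0$, which is indeed at least $-h_i$.

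Summing the termwise inequality over $i=1,\dots,n$ and using the identity above completes the proof. There is no real obstacle here. The only points requiring care are the rewriting of the right-hand side via $\sum_i\tau_i=1$ and keeping track of the sign when multiplying by $\tau_i-h_i$.
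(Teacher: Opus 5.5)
Your proof is correct and is essentially the paper's argument: the paper likewise writes $1-\sum_i h_i=\sum_i(\tau_i-h_i)$ and uses the termwise inequality $h_i^{-k_i}(\tau_i^{k_i}-h_i^{k_i})(\tau_i-h_i)\geq 0$. The only difference is cosmetic: the paper reads off the sign of that product directly from the monotonicity of $s\mapsto s^{k_i}$, while you split into the cases $\tau_i\geq h_i$ and $\tau_i<h_i$.
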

\begin{proof}
    Observe that $(\tau_i^{k_i} - h_i^{k_i})(\tau_i-h_i) \geq0$ for any $k_i>0$, $h_i>0$, and $\tau_i \geq0$.
    Therefore, we obtain
    \begin{align*}
        \sum_i \frac{1}{h_i^{k_i}} \tau_i^{k_i}(\tau_i-h_i) &= \sum_i \frac{1}{h_i^{k_i}} (\tau_i^{k_i}-h_i^{k_i})(\tau_i-h_i) +\sum_i (\tau_i-h_i) \\
        &\geq \sum_i\tau_i - \sum_ih_i = 1-\sum_i h_i. 
    \end{align*}
\end{proof}
For $a_i >1$ and $b_i>0$ with $i\in \{1,\cdots,n\}$, we define an anisotropic function $|bx|_a : \R^n \rightarrow \R$ as
\begin{align*}
    |bx|_a = |b_1x_1|^{a_1} + \cdots+|b_nx_n|^{a_n}.
\end{align*}
A direct computation gives the gradient and Hessian:
\begin{align*}
    D_i |bx|_a &= a_ib_i|b_ix_i|^{a_i-1} \frac{x_i}{|x_i|}, \\
    D^2|bx|_a &= \diag{a_i(a_i-1)b_i^2 |b_ix_i|^{a_i-2}}. 
\end{align*}
We define our candidate barrier function by
\begin{align*}
    \Phi(x) = \frac{1}{\gamma}|bx|_a^{-\gamma} = \frac{1}{\gamma} (|b_1x_1|^{a_1} + \cdots+|b_nx_n|^{a_n})^{-\gamma},
\end{align*}
where $a_i>1$, $b_i >0$ and $\gamma>0$ are parameters to be determined.
Note that $\Phi$ is $C^1(\R^n\setminus\{0\})$, but may fail to be $C^2$ on the coordinate hyperplanes $\bigcup_i\{x_i=0\}$.
The main result of this section is the following lemma, which shows that $\Phi$ serves as a viscosity subsolution of the following equation.

\begin{lem} \label{Phipdelem}
    If $(p_i)$ satisfies \eqref{pcond}, then for any $R>1$, there exists $\mu\in(0,\Lambda)$ depending on $n,(p_i),\lambda,\Lambda$ and $R$ such that
    \begin{align} \label{Phiequ}
        \mathcal{M}^-_{\lambda,\Lambda} \left( \diag{|D_i\Phi|^{\frac{p_i-2}{2}}} D^2\Phi \diag{|D_i\Phi|^{\frac{p_i-2}{2}}} \right) -\mu\sum_i|D_i\Phi|^{p_i-1} > \Phi^{d_0} \quad \text{ in }Q_{R} \setminus\{0\}
    \end{align}
    in the viscosity sense, where $d_0 > p_n-1$.
\end{lem}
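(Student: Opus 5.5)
The plan is a direct computation in the region where $\Phi$ is smooth, followed by a separate argument on the coordinate hyperplanes. Write $S=|bx|_a$ and $\tau_i=|b_ix_i|^{a_i}/S\in[0,1]$, so that $\sum_i\tau_i=1$. Then
\[
D\Phi=-S^{-\gamma-1}DS,\qquad D^2\Phi=(\gamma+1)S^{-\gamma-2}DS\otimes DS-S^{-\gamma-1}D^2S .
\]
Hence, with $W=\diag{|D_i\Phi|^{\frac{p_i-2}{2}}}$, the matrix $X=WD^2\Phi W$ splits as $X=P+N$, where $P=(\gamma+1)S^{-\gamma-2}(WDS)\otimes(WDS)\ge0$ has rank one and $N=-S^{-\gamma-1}W D^2S\,W\le0$ is diagonal.

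\textbf{Step 1: lower bound for $\mathcal{M}^-(P+N)$.} The crude bound $\mathcal{M}^-(X)\ge\lambda\tr P+\Lambda\tr N$ from Proposition \ref{pucciprop} is not sharp enough to produce the factor $(1-\frac{\lambda}{\Lambda})$ in \eqref{pcond}. Instead I use that $X$ has at most one positive eigenvalue $e_+$, so $\mathcal{M}^-(X)=\Lambda\tr X-(\Lambda-\lambda)e_+$. By Weyl's inequality, $e_+\le\tr P+\max_i N_{ii}=\tr P-\min_i|N_{ii}|$. This gives
\[
\mathcal{M}^-(X)\ge\lambda\tr P+\Lambda\tr N+(\Lambda-\lambda)\min_i|N_{ii}|.
\]
I expect the optimal $\min_i$ term to be what generates the $(1-\frac{\lambda}{\Lambda})\frac1{p_1}$ corrections. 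If it turns out too delicate, I will fall back on the crude bound and check whether \eqref{pcond} still closes.

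\textbf{Step 2: homogeneity and choice of $a_i$.} A direct count shows that the $i$-th diagonal entries of $P$ and $N$ are both multiples of $S^{-\gamma(p_i-1)-p_i/a_i}$ times powers of $\tau_i$ and of $b_i$. I therefore fix $\theta>\gamma(p_n-1)$ and set $p_i/a_i=\theta-\gamma(p_i-1)$. Requiring $a_i>1$ means $\theta-\gamma(p_i-1)<p_i$.

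With this choice every term scales like $S^{-\theta}$, and \eqref{Phiequ} without the $\mu$ term reduces to a pointwise inequality in $\tau$ of the form
\[
\sum_i c_i\,\tau_i^{k_i}\bigl(\lambda(\gamma+1)\tau_i-h_i'\bigr)\ge c>0.
\]
Here $k_i\ge0$ are explicit, $h_i'\sim\Lambda\frac{a_i-1}{a_i}$ modulo the correction from Step 1, and the $b_i$ are used to normalize the coefficients $c_i$. Lemma \ref{simple} with $h_i=h_i'/(\lambda(\gamma+1))$ then gives the lower bound $1-\sum_ih_i$.

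The main obstacle is to choose $\gamma,\theta$ so that $\sum_ih_i<1$ while keeping $a_i>1$ and $\theta>\gamma(p_n-1)$. I will write $\sum_i\frac{a_i-1}{a_i}=n-\sum_i\frac{\theta-\gamma(p_i-1)}{p_i}$, which brings in $n/\overline{p}$. I then expect that optimizing over $\theta/\gamma$, pushed near the endpoint $\theta\downarrow\gamma(p_n-1)$, and over large $\gamma$ reduces exactly to \eqref{pcond}.

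\textbf{Step 3: lower-order terms and the right-hand side.} Set $d_0=\theta/\gamma>p_n-1$; this can be made strict by shrinking the slack. Then $\Phi^{d_0}=\gamma^{-d_0}S^{-\theta}$, and multiplying $b$ by a scalar or adjusting constants makes the $S^{-\theta}$ coefficient dominate. Each drift term satisfies $|D_i\Phi|^{p_i-1}\lesssim S^{-\theta+1/a_i}$, while $S$ is bounded on $Q_R$ in terms of $R$. Choosing $\mu=\mu(R)$ small therefore absorbs all drift terms and yields the strict inequality on $Q_R\setminus\{0\}$.

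\textbf{Step 4: the hyperplanes $\{x_i=0\}$.} If $a_i\ge2$, then $\Phi$ is $C^2$ in the $x_i$ direction. If $a_i<2$, then $\Phi$ behaves like $-|x_i|^{a_i}$, so test functions $\psi$ touching from above have $D_i\psi=0$. When $p_i>2$ the weight $|D_i\psi|^{\frac{p_i-2}{2}}$ vanishes, so the $i$-th row and column of the test matrix are zero. The test matrix then coincides with the corresponding computation in the remaining coordinates, where $\tau_i=0$, and Lemma \ref{simple} still applies because it allows $\tau_i=0$. Any $i$ with $p_i=2$ has $a_i=2/(\theta-\gamma)$, and I will additionally ensure $a_i\ge2$ in that case, so $\Phi$ is $C^2$ in that direction. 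At points where only some coordinates vanish, I restrict to slices as in the proof of Lemma \ref{basicmeaslem}.
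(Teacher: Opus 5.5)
Your Steps 3 and 4 agree with the paper. Steps 1--2, however, do not go through as stated, and they are exactly where \eqref{pcond} comes from.

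The Weyl gain $(\Lambda-\lambda)\min_i|N_{ii}|$ is not of the separable form $\sum_i c_i\tau_i^{k_i}(\cdots)$ that Lemma \ref{simple} handles. Since $|N_{ii}|\propto\tau_i^{k_i}$ with $k_i>0$, the gain vanishes on every hyperplane $\{x_i=0\}$ and is small near them. So taking ``$h_i'$ modulo the correction'' and then applying Lemma \ref{simple} is not a valid step. Making the Weyl bound work would require a non-separable, region-by-region analysis of the $\min$, which you do not supply.

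Your fallback of dropping the gain provably fails under \eqref{pcond} when $\lambda<\Lambda$. If $\Lambda\sum_i(1-\frac1{a_i})\ge\lambda(\gamma+1)$, the separable lower bound is $\le 0$ at $\tau_i\propto 1-\frac1{a_i}$, whatever $b_i$ you choose. Take $n=2$, $p_1=2$, $p_2=2.6$, $\lambda/\Lambda=\frac12$, which satisfies \eqref{pcond}. No admissible pair $\gamma(p_n-1)<\theta<(\gamma+1)(p_1-1)$ avoids the bad case: avoiding it needs $\gamma>2.28$, while admissibility forces $\gamma<\frac{p_1-1}{p_n-p_1}=\frac53$.

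The missing idea is the paper's rank-one comparison. Write $X=\frac{\gamma+1}{|bx|_a}v\otimes v-A$, so the first term is your $P$ and $A=-N$. Cauchy--Schwarz gives $(\sum_iv_i\xi_i)^2\le(\sum_iA_{ii}\xi_i^2)\sum_i\frac{v_i^2}{A_{ii}}$. Moreover $\sum_i\frac{v_i^2}{A_{ii}}=\sum_i\frac{|b_ix_i|^{a_i}}{1-1/a_i}\le\frac{|bx|_a}{m}$ with $m=\min_i(1-\frac1{a_i})=1-\frac1{a_1}$, so $A\ge\frac{m}{|bx|_a}v\otimes v$. Now write $X$ as the positive semidefinite matrix $\frac{\gamma+1-m}{|bx|_a}v\otimes v$ minus the positive semidefinite matrix $A-\frac{m}{|bx|_a}v\otimes v$. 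This yields
\[
\mathcal{M}^-(X)\ge\bigl(\lambda(\gamma+1)+(\Lambda-\lambda)m\bigr)\sum_i\frac{v_i^2}{|bx|_a}-\Lambda\tr{A}.
\]
Thus the positive eigenvalue is reduced by the fixed fraction $\frac{m}{\gamma+1}$ of $\tr{P}$ everywhere, including on the hyperplanes. The gain is proportional to $\sum_i c_i\tau_i^{k_i+1}$, so it only enlarges the coefficient $\lambda(\gamma+1)$ and Lemma \ref{simple} applies verbatim. The identity $m=\frac{1+k_1}{p_1}>\frac1{p_1}$ is what produces the $(1-\frac{\lambda}{\Lambda})\frac1{p_1}$ terms in \eqref{pcond}.

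Your constraint $a_i>1$ is also too weak. You need $k_i=(\gamma+1)(p_i-1)-\theta>0$; otherwise $-\Lambda(1-\frac1{a_i})c_i\tau_i^{k_i}\to-\infty$ as $x_i\to0$. Combined with $\theta>\gamma(p_n-1)$, this forces $\gamma<\frac{p_1-1}{p_n-p_1}$, so ``large $\gamma$'' is not available. Condition \eqref{pcond} is precisely the compatibility of this upper bound with the lower bound on $\gamma$ coming from $\sum_ih_i<1$.
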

\begin{proof}
    Observe that
    \begin{align*}
        D\Phi&= -|bx|_a^{-(\gamma+1)} D|bx|_a ,\\
        D^2 \Phi &= (\gamma+1)|bx|_a^{-(\gamma+2)} (D|bx|_a \otimes D|bx|_a) -|bx|_a^{-(\gamma+1)}D^2|bx|_a.
    \end{align*}
    This implies
    \begin{align} \label{DphiD2phi}
        \diag{|D_i\Phi|^{\frac{p_i-2}{2}}} D^2\Phi \diag{|D_i\Phi|^{\frac{p_i-2}{2}}} =&\frac{\gamma+1}{|bx|_a}(v \otimes v)  -A,
    \end{align}
    where
    \begin{align*}
        v&=(v_i)=\left(|bx|_a^{-(\gamma+1)\frac{p_i-1}{2}}|D_i|bx|_a|^{\frac{p_i-2}{2}}D_i|bx|_a\right) \\
        &=\left((a_ib_i)^{\frac{p_i}{2}}|bx|_a^{-(\gamma+1)\frac{p_i-1}{2}}|b_ix_i|^{(a_i-1)\frac{p_i}{2}}\frac{x_i}{|x_i|}\right) \in \R^n,
        \end{align*}
        and
        \begin{align*}
        A&=\diag{|bx|_a^{-(\gamma+1)(p_i-1)}|D_i|bx|_a|^{p_i-2} D_{ii}|bx|_a}\\
        &= \diag{(a_ib_i)^{p_i}\left( 1-\frac{1}{a_i}\right)|bx|_a^{-(\gamma+1)(p_i-1)}|b_ix_i|^{(a_i-1)p_i-a_i}}\in S(n).
    \end{align*}
    We now choose the exponents $a_i$ in such a way that the terms $|bx|_a^{-(\gamma+1)(p_i-1)}|b_ix_i|^{(a_i-1)p_i-a_i} $ have the same homogeneity in all coordinates $i$.
    We achieve this by requiring each term to behave like $|bx|_a^{-d}$ for some fixed $d \in \R$.
    Specifically, we choose $a_i>0$ such that
    \begin{align*}
        -(\gamma+1)(p_i-1) + \left( 1 -\frac{1}{a_i} \right)p_i -1 = -d.
    \end{align*}
    Then it follows that
    \begin{align} \label{aidef}
        a_i := \frac{p_i}{d-\gamma(p_i-1)}>0 \ \Longleftrightarrow \ d>\gamma(p_i-1).
    \end{align}
    It follows that $a_n \geq \cdots \geq a_1>0$.
    We also require $ (a_i-1)p_i-a_i =:a_ik_i$ to be positive, which yields
    \begin{align*}
        k_i := (\gamma+1)(p_i-1)-d>0 \ \Longleftrightarrow \ d<(\gamma+1)(p_i-1).
    \end{align*}
    Note that the condition $d<(\gamma+1)(p_i-1)$ implies $a_i >\frac{p_i}{p_i-1} >1$.
    Hence, if $p_i=2$, then $a_i >2$ so no loss of twice differentiability of $\Phi$ occurs in the $i$-th coordinate direction at $x_i=0$.
    On the other hand, if $p_j>2$, then it is possible that $a_j <2$, in which case $\Phi$ is not $C^2$ on $\{x_j=0\}$.
    Since $\max_i\gamma(p_i-1) = \gamma(p_n-1)$ and $ \min_i(\gamma+1)(p_i-1)=(\gamma+1)(p_1-1)$, we need to choose $d \in \R$ satisfying
    \begin{align} \label{dcond}
        \gamma(p_n-1) < d<(\gamma+1)(p_1-1).
    \end{align}
    For such a constant $d$ to exist, the following condition on $\gamma>0$ is necessary:
    \begin{align} \label{gamcond1}
        \gamma(p_n-1) < (\gamma+1)(p_1-1) \Longleftrightarrow \gamma< \frac{p_1-1}{p_n-p_1}.
    \end{align}
    Then, we have
    \begin{align} \label{bxdtauk}
        |bx|_a^{-(\gamma+1)(p_i-1)}|b_ix_i|^{(a_i-1)p_i-a_i}  = |bx|_a^{-d} \tau_i^{k_i}
    \end{align}
    where
    \begin{align} \label{sumtau}
        \tau_i := \frac{|b_ix_i|^{a_i}}{|bx|_a} \geq0 \quad \text{ satisfying } \quad \sum_i \tau_i =1,
    \end{align}
    which follows from the definition of $|bx|_a$.
    We now seek the largest constant $m \geq 0$ satisfying
    \begin{align*}
        A -\frac{m}{|bx|_a}v\otimes v \geq 0.
    \end{align*}
    Note that at every point at which $x_i\neq0$ for all $i$, $A$ is diagonal with positive entries.
    For any $\xi \in \R^n$, the Cauchy-Schwarz inequality gives
    \begin{align*}
        \left(\sum_i v_i\xi_i\right)^2 \leq \left(\sum_i A_{ii}\xi_i^2\right) \left( \sum_i \frac{v_i^2}{A_{ii}} \right) =\left(\sum_i A_{ii}\xi_i^2\right) \left(\sum_i\frac{|b_ix_i|^{a_i}}{ 1-\frac{1}{a_i}} \right),
    \end{align*}
    where we used $\frac{v_i^2}{A_{ii}} = \frac{|b_ix_i|^{a_i}}{ 1-\frac{1}{a_i}}$ by the definitions of $v$ and $A$.
    Note that if $x_i=0$ for some $i$, then $v_i=A_{ii}=0$.
    Hence the $i$-th terms on both sides vanish, so the same inequality continues to hold.
    We now choose 
    \begin{align*}
        m:=\min_i{\left( 1-\frac{1}{a_i}\right)} = 1-\frac{1}{a_1}<1.
    \end{align*}
    Consequently, using \eqref{sumtau},
    \begin{align*}
        \xi^T \left( A -\frac{m}{|bx|_a}v\otimes v\right)\xi &= \sum_i A_{ii}\xi^2_i - \frac{m}{|bx|_a} \left(\sum v_i \xi_i\right)^2\\
        &\geq \left( 1-\sum_i \frac{m}{ 1-\frac{1}{a_i}} \frac{|b_ix_i|^{a_i}}{|bx|_a}\right) \left(  \sum_i A_{ii}\xi^2_i\right)\\
        &\geq \left( 1-\sum_i \tau_i \right) \left(  \sum_i A_{ii}\xi^2_i\right) \geq0,
    \end{align*}
    for any $\xi \in \R^n$.
    Therefore, we conclude that $A -\frac{m}{|bx|_a}v\otimes v \geq 0$.
    Then, we rewrite \eqref{DphiD2phi} as
    \begin{align*}
        \diag{|D_i\Phi|^{\frac{p_i-2}{2}}} D^2\Phi \diag{|D_i\Phi|^{\frac{p_i-2}{2}}} =&\frac{\gamma+1-m}{|bx|_a}(v \otimes v)  - \left( A -\frac{m}{|bx|_a}v\otimes v \right).
    \end{align*}
    Using $v\otimes v \geq 0$ and $A -\frac{m}{|bx|_a}v\otimes v \geq 0$,  together with Proposition \ref{pucciprop} and \eqref{bxdtauk},  we obtain
    \begin{align*}
        \mathcal{M}^-& \left( \diag{|D_i\Phi|^{\frac{p_i-2}{2}}} D^2\Phi \diag{|D_i\Phi|^{\frac{p_i-2}{2}}} \right) -\mu\sum_i|D_i\Phi|^{p_i-1} \\
        \geq& \mathcal{M}^-  \left( \frac{\gamma+1-m}{|bx|_a}(v \otimes v) \right) - \mathcal{M}^+\left( A -\frac{m}{|bx|_a}v\otimes v \right)  -\mu\sum_i|D_i\Phi|^{p_i-1}\\
         = &   (\lambda (\gamma+1)+ (\Lambda-\lambda)m)\sum_i \frac{|v_i|^2}{|bx|_a} - \Lambda \sum_i A_{ii}-\mu \sum_i |bx|_a^{-(\gamma+1)(p_i-1)}(a_ib_i)^{p_i-1}|b_ix_i|^{(a_i-1)(p_i-1)}\\
        =& (\lambda (\gamma+1)+ (\Lambda-\lambda)m)  \sum_i (a_ib_i)^{p_i} |bx|_a^{-(\gamma+1)(p_i-1)}|b_ix_i|^{(a_i-1)p_i-a_i} \frac{|b_ix_i|^{a_i}}{|bx|_a} \\
        &-\Lambda\sum_i \left(1-\frac{1}{a_i}\right)  (a_ib_i)^{p_i}|bx|_a^{-(\gamma+1)(p_i-1)} |b_ix_i|^{(a_i-1)p_i -a_i}\\
        &-\mu \sum_i (a_ib_i)^{p_i} |bx|_a^{-(\gamma+1)(p_i-1)}|b_ix_i|^{(a_i-1)p_i-a_i} \frac{|x_i|}{a_i}\\
        =&|bx|_a^{-d} \sum_i (a_ib_i)^{p_i} \tau_i^{k_i}\left((\lambda (\gamma+1)+ (\Lambda-\lambda)m) \tau_i - \Lambda\left( 1- \frac{1-(\mu/\Lambda)|x_i|}{a_i} \right)\right)  =: (II).
    \end{align*}
    Consequently, for $x \in Q_{R} \setminus\{0\}$ and $\mu \leq \frac{\delta\Lambda}{R}$ for some $\delta\in(0,1)$, it follows that
    \begin{align*}
        (II)  \geq|bx|_a^{-d}  (\lambda (\gamma+1)+ (\Lambda-\lambda)m)\sum_i (a_ib_i)^{p_i}  \tau_i^{k_i} \left( \tau_i - h_i \right),
    \end{align*}
    where 
    \begin{align*}
        h_i := \frac{\Lambda \left( 1- \frac{(1-\delta)}{a_i}\right)}{(\lambda (\gamma+1)+ (\Lambda-\lambda)m)}>0.
    \end{align*}
    We choose $b_i>0$ by setting $b_i := \frac{K^{1/p_i}}{a_ih_i^{k_i/p_i}}>0$ for some $K>0$, which yields $(a_ib_i)^{p_i} =\frac{K}{h_i^{k_i}}$.
    Applying Lemma \ref{simple} together with \eqref{sumtau}, we deduce that
    \begin{align*}
        (II)  &\geq|bx|_a^{-d}  K(\lambda (\gamma+1)+ (\Lambda-\lambda)m)\sum_i \frac{1}{h_i^{k_i}}  \tau_i^{k_i} \left( \tau_i - h_i \right) \\
        &\geq\Phi^{d_0}  K \gamma^{d_0}(\lambda (\gamma+1)+ (\Lambda-\lambda)m) \left(1-\sum_i h_i\right),
    \end{align*}
    where $d_0 := \frac{d}{\gamma}>p_n-1$.
    Recall from \eqref{dcond} that $\gamma p_n < d+\gamma<(\gamma+1)p_1-1.$
    Therefore, using \eqref{aidef} we obtain
    \begin{align*}
        \sum_i h_i &= \sum_i \frac{\Lambda\left( 1- (1-\delta) \frac{d-\gamma(p_i-1)}{p_i}
        \right)}{\lambda (\gamma+1)+ (\Lambda-\lambda)(1-\frac{1}{a_1})} \\
        &= \frac{n\Lambda\left( 1+(1-\delta)\left(\gamma -\frac{d+\gamma}{\overline{p}} \right) \right)}{\lambda (\gamma+1)+ (\Lambda-\lambda)\left(1+\gamma - \frac{d+\gamma}{p_1}\right)}  \\
        &\leq  \frac{n\Lambda\left( 1- (1-\delta)\gamma\left(\frac{p_n}{\overline{p}}-1 \right) \right)}{ \lambda (\gamma+1)+ (\Lambda-\lambda)\frac{1}{p_1}},
    \end{align*}
    where $\overline{p} = \left( \frac{1}{n}\sum_i \frac{1}{p_i}\right)^{-1}$.
    This implies
    \begin{align*}
        \gamma>\frac{n-\frac{\lambda}{\Lambda}-\left(1-\frac{\lambda}{\Lambda} \right) \frac{1}{p_1}}{\frac{\lambda}{\Lambda} + n(1-\delta)(\frac{p_n}{\overline{p}}-1)}\ \Longrightarrow \  1-\sum_ih_i >0 .
    \end{align*}
    Combining this with \eqref{gamcond1}, we need to choose $\gamma$ and $\delta>0$ to be small enough such that
    \begin{align*}
        \frac{n-\frac{\lambda}{\Lambda}-\left(1-\frac{\lambda}{\Lambda} \right) \frac{1}{p_1}}{\frac{\lambda}{\Lambda} + n(1-\delta)(\frac{p_n}{\overline{p}}-1)}<\gamma< \frac{p_1-1}{p_n-p_1}.
    \end{align*}
    For such a choice of $\gamma$ and $\delta$ to exist, we require the following condition on $(p_i)$:
    \begin{align*}
        \frac{n-\frac{\lambda}{\Lambda}-\left(1-\frac{\lambda}{\Lambda} \right) \frac{1}{p_1}}{\frac{\lambda}{\Lambda} + n(\frac{p_n}{\overline{p}}-1)}< \frac{p_1-1}{p_n-p_1},
    \end{align*}
    which is equivalent to the condition \eqref{pcond}.
    Finally, choosing $K>1$ sufficiently large, we conclude that $(II) > \Phi^{d_0}$, which implies \eqref{Phiequ}.
    However, since $\Phi$ may not be $C^2$ in $\bigcup_{\{i:p_i\neq2\}}\{x_i=0\}$, the inequality \eqref{Phiequ} does not necessarily hold in the classical sense.
    Nevertheless, we can prove that $\Phi$ satisfies \eqref{Phiequ} in the viscosity sense.
    Suppose that $\phi\in C^2$ touches $\Phi$ from above at a point $y$ with $y_i=0$ for some $i$ satisfying $p_i\neq 2$, then we have $D_i\phi(y)=0$.
    As a result, $D_{ii}\phi(y)$ plays no role in the evaluation of the operator allowing us to perform essentially the same direct computation as in the non-degenerate case.
    Consequently, we conclude that $\Phi$ is a viscosity subsolution of \eqref{Phiequ} in $Q_{R} \setminus\{0\}$.
\end{proof}

We also require the following scaling property of the barrier function $\Phi$.
\begin{lem} \label{barscal}
    For any $r>0$, we have
    \begin{align*}
        \lim_{L\rightarrow\infty} \frac{1}{L} \sup_{\R^n \setminus K_r(L)} \Phi =0, \quad \lim_{m\rightarrow0} \frac{1}{m} \inf_{ K_r(m)} \Phi =\infty.
    \end{align*}
\end{lem}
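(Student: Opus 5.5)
The plan is to compare the anisotropic homogeneity of $\Phi$ with the geometry of $K_r(M)$. Since $\Phi = \frac{1}{\gamma}|bx|_a^{-\gamma}$, both limits reduce to bounding $|bx|_a$ from below outside $K_r(L)$ and from above inside $K_r(m)$. Recall that $a_i = \frac{p_i}{d-\gamma(p_i-1)}$, so
\begin{align*}
\frac{1}{a_i p_i}=\frac{d-\gamma(p_i-1)}{p_i^2}.
\end{align*}
The half-widths of $K_r(M)$ are $r^{1/p_i}M^{-(p_n-p_i)/p_i}$, so at the boundary in direction $i$ we have
\begin{align*}
|b_ix_i|^{a_i}=b_i^{a_i} r^{a_i/p_i} M^{-a_i(p_n-p_i)/p_i}.
\end{align*}

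For the first limit, take $x\notin K_r(L)$. Then some coordinate satisfies $|x_i|\ge r^{1/p_i}L^{-(p_n-p_i)/p_i}$, hence
\begin{align*}
|bx|_a\ge c(r)\,L^{-a_i(p_n-p_i)/p_i}\quad\text{and}\quad \sup_{\R^n\setminus K_r(L)}\Phi\le C(r)\max_i L^{\gamma a_i(p_n-p_i)/p_i}.
\end{align*}
It then suffices to show $\gamma a_i(p_n-p_i)/p_i<1$ for every $i$, i.e. $\gamma(p_n-p_i)<d-\gamma(p_i-1)$. This is equivalent to $d>\gamma(p_n-1)$, which is exactly the left inequality in \eqref{dcond}. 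For $i=n$ the exponent is $0$, and $0<1$ trivially. Dividing by $L$ therefore gives a negative power of $L$, which tends to $0$.

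For the second limit, take $x\in K_r(m)$ with $m\to0$. Each $|b_ix_i|^{a_i}\le C(r)\,m^{-a_i(p_n-p_i)/p_i}$, and these powers blow up for $i<n$, so a naive upper bound for $|bx|_a$ is large. That in turn makes $\inf\Phi$ small, like $m^{\gamma\max_i a_i(p_n-p_i)/p_i}$. We need this to beat $m$, i.e. $\gamma a_i(p_n-p_i)/p_i<1$, which is the same inequality as before. Hence
\begin{align*}
\frac{1}{m}\inf_{K_r(m)}\Phi\ge c(r)\,m^{-1+\gamma\max_i a_i(p_n-p_i)/p_i}\to\infty.
\end{align*}
The term $i=n$ contributes $|b_nx_n|^{a_n}\le C(r)$, which is bounded. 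So as $m\to0$ one has $|bx|_a\le C(r)(1+\sum_{i<n}m^{-a_i(p_n-p_i)/p_i})\le C'(r)m^{-\theta}$ with $\theta=\max_i a_i(p_n-p_i)/p_i$ (taking $m\le1$).

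The only substantive point is verifying the exponent inequality $\gamma a_i(p_n-p_i)<p_i$ from the choice of $d$ in \eqref{dcond}. This is a one-line algebraic check:
\begin{align*}
\gamma(p_n-p_i)<d-\gamma(p_i-1)\iff d>\gamma(p_n-1),
\end{align*}
and this is where the barrier construction's constraint is used. The rest is bookkeeping of constants depending on $r$, $b_i$, $a_i$ and $\gamma$.
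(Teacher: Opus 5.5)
Your proof is correct and follows the paper's argument. Like the paper, you bound $|bx|_a$ from below on $\R^n\setminus K_r(L)$ and from above on $K_r(m)$ using the face values $|b_i r^{\alpha_i}M^{\beta_i}|^{a_i}$, and you reduce both limits to the exponent condition $\frac{\gamma(p_n-p_i)}{d-\gamma(p_i-1)}<1$, which is equivalent to $d>\gamma(p_n-1)$ from \eqref{dcond}; the paper simply writes the resulting power directly as $L^{-\frac{d-\gamma(p_n-1)}{d-\gamma(p_i-1)}}$ (and analogously for $m$) instead of isolating that inequality.
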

\begin{proof}
    A direct calculation gives
    \begin{align*}
        \frac{1}{L} \sup_{\R^n \setminus K_r(L)} \Phi &= \frac{c}{L} (\inf_{\R^n \setminus K_r(L)} |bx|_a)^{-\gamma} 
        = \frac{c}{L} (\min_{i} |b_ir^{\alpha_i} L^{\beta_i}|^{a_i})^{-\gamma}\\
        &= \frac{c}{L} (\min_{i} c_i L^{-\frac{p_n-p_i}{d-\gamma(p_i-1)}})^{-\gamma} 
        = \frac{c}{L} \max_i c_i L^{\frac{\gamma(p_n-p_i)}{d-\gamma(p_i-1)}} \\
        &= \max_i c_i L^{-\frac{d-\gamma(p_n-1)}{d-\gamma(p_i-1)}} \overset{L\rightarrow \infty}{\longrightarrow} 0,
    \end{align*}
    since $d-\gamma(p_i-1)>0$ for any $i\in\{1,\cdots,n\}$ by \eqref{dcond}. Similarly,
    \begin{align*}
        \frac{1}{m} \inf_{ K_r(m)} \Phi &= \frac{c}{m} (\sup_{K_r(m)} |bx|_a)^{-\gamma} 
        \geq \frac{c}{m} (\max_{i} n|b_ir^{\alpha_i} m^{\beta_i}|^{a_i})^{-\gamma}\\
        &= \frac{c}{m} (\max_{i} c_i m^{-\frac{p_n-p_i}{d-\gamma(p_i-1)}})^{-\gamma} 
        = \frac{c}{m} \min_i c_i m^{\frac{\gamma(p_n-p_i)}{d-\gamma(p_i-1)}} \\
        &= \min_i c_i m^{-\frac{d-\gamma(p_n-1)}{d-\gamma(p_i-1)}} \overset{m\rightarrow 0}{\longrightarrow} \infty.
    \end{align*}
\end{proof}

\section{Doubling property} \label{sec5}
In this section, we establish the doubling property for viscosity supersolutions using the barrier function constructed in Section \ref{sec4}.
Throughout the remainder of the paper, we assume that the exponents $(p_i)$ satisfy the assumption \eqref{pcond}.
\begin{lem} \label{doublem}
    There exist $m_0 \in(0,1)$, $L_0>1$, $R_1>1$, $\mu_0 \in(0,1)$ and $\epsilon_0 \in (0,1)$ such that 
    if $u \in C(K_{R_1}(m_0))$ is nonnegative in $K_{R_1}(m_0)$ and satisfies
    \begin{align} \label{doubpde}
            \mathcal{M}^- \left( \diag{|D_iu|^{\frac{p_i-2}{2}}} D^2u \diag{|D_iu|^{\frac{p_i-2}{2}}} \right) -\mu_0\sum|D_iu|^{p_i-1} \leq  \epsilon_0 \quad \text{ in } K_{R_1}(m_0), 
    \end{align}
    then
    \begin{align*}
        u>L_0m_0 \quad \text{ in }K_{r_0}(L_0m_0), \quad \Longrightarrow \quad u>m_0 \quad \text{ in } K_1(m_0),
    \end{align*}
    where $r_0  \in(0,1)$ is as in Lemma \ref{basicmeaslem}.
\end{lem}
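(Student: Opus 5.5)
The plan is to compare $u$ with a rescaled, truncated version of the barrier $\Phi$ from Lemma \ref{Phipdelem} on an annular region, and to read off the lower bound on $K_1(m_0)$ from the scaling behaviour in Lemma \ref{barscal}.

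\textbf{Step 1: the comparison function.} Fix $R=R_1$ large, so that $Q_R\supset K_{R_1}(m_0)$; this is possible since $m_0<1$ forces $m_0^{\beta_i}\ge1$, so I will take $R$ depending on $m_0$ after $m_0$ is fixed. Set
\begin{align*}
w(x)=\theta\bigl(\Phi(x)-\Phi_*\bigr),\qquad \Phi_*:=\sup_{\partial K_{R_1}(m_0)}\Phi ,
\end{align*}
with $\theta\in(0,1]$ to be chosen. Then $w\le 0\le u$ on $\partial K_{R_1}(m_0)$. By Lemma \ref{Phipdelem} and the homogeneity of the operator, $w$ satisfies
\begin{align*}
\mathcal{M}^-\Bigl(\diag{|D_iw|^{\frac{p_i-2}{2}}}D^2w\diag{|D_iw|^{\frac{p_i-2}{2}}}\Bigr)-\mu\sum_i|D_iw|^{p_i-1}\;\ge\; \theta^{p_i-1}\text{-weighted terms} .
\end{align*}
Since the degrees $p_i-1$ differ between coordinates, exact homogeneity fails. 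I will therefore not rescale $\Phi$ vertically. Instead I will use $w=\Phi-\Phi_*$ directly and put all the scaling into the choice of the cubes, which is what Lemma \ref{barscal} is built for. Then $w$ is a strict subsolution with right-hand side $\Phi^{d_0}$. I take $\mu_0=\mu$ from Lemma \ref{Phipdelem} and choose $\epsilon_0\le\inf_{K_{R_1}(m_0)}\Phi^{d_0}$; this infimum is positive because $\Phi$ is bounded below on bounded sets. So $w$ is a strict subsolution while $u$ is a supersolution with right-hand side $\epsilon_0$.

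\textbf{Step 2: inner boundary and comparison.} On the inner boundary $\partial K_{r_0}(L_0m_0)$ I need $w\le u$, that is $\Phi-\Phi_*\le L_0m_0$ there, using the hypothesis $u>L_0m_0$ on $K_{r_0}(L_0m_0)$. By the first limit in Lemma \ref{barscal}, $\sup_{\R^n\setminus K_{r_0}(L)}\Phi\le L$ once $L$ is large. Hence taking $L_0m_0=L$ large enough gives $w\le\Phi\le L_0m_0<u$ on $\partial K_{r_0}(L_0m_0)$, and indeed on the whole annulus boundary. I then apply comparison on the annulus $K_{R_1}(m_0)\setminus \overline{K_{r_0}(L_0m_0)}$. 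Lemma \ref{comp} needs strict monotonicity in the zeroth-order variable, and the strict inequality in \eqref{Phiequ} is what substitutes for it. I will run the standard argument directly: at a point where $u-w$ attains a negative minimum, $w$ plus a constant touches $u$ from below. A perturbation $w_\eta$ is needed because $u$ is only a supersolution. I would first try replacing $w$ by $w-\eta|x|^2$ with a small $\eta$, or use an inf-convolution, so that the strict gap $\Phi^{d_0}-\epsilon_0>0$ produces a contradiction. This gives $u\ge w$ in the annulus, and $u>L_0m_0\ge m_0$ inside $K_{r_0}(L_0m_0)$.

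\textbf{Step 3: the conclusion and the order of constants.} On $K_1(m_0)$ the second limit in Lemma \ref{barscal} gives $\inf_{K_1(m_0)}\Phi\ge 2m_0$ for $m_0$ small. Then $u\ge \Phi-\Phi_*>m_0$, provided $\Phi_*<m_0$. The main obstacle is the order in which the constants are chosen. The natural order is to fix $L$ through the first limit, then $m_0$ small through the second limit, and then $R_1$ large so that $\Phi_*\le \sup_{\R^n\setminus K_{R_1}(m_0)}\Phi<m_0$; the last step holds because $\Phi\to0$ at infinity. But $\mu$ in Lemma \ref{Phipdelem} depends on $R$, and $R$ in turn depends on $R_1$ and $m_0$, which is acceptable since $\mu_0$ is chosen last. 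The real compatibility issue is that $L_0=L/m_0$ then depends on $m_0$. This is allowed, because the statement only requires the constants to exist. It must also be checked that $\epsilon_0\le\inf\Phi^{d_0}$ over the annulus, which is bounded below by $\Phi_*^{d_0}$, remains positive, and it does. Finally, the viscosity subtlety on the hyperplanes $\{x_i=0\}$ is already handled by Lemma \ref{Phipdelem}.
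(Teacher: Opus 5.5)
Your barrier and the order in which you fix the constants are essentially the paper's: it also shifts $\Phi$ downward (by $m_0$ rather than $\Phi_*$), takes $m_0$ from the second limit of Lemma \ref{barscal}, then $R_1$, then $L_0m_0$ large from the first limit, and takes $\mu_0$ from Lemma \ref{Phipdelem}. The gap is in the comparison of Step 2.

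Your argument works at a single contact point: at a negative minimum of $u-w$, you use $w+c$ (or $w-\eta|x|^2+c$) as a test function for the supersolution $u$. That requires $w$ to be $C^2$ at the contact point. But $\Phi$ is not $C^2$ on $\{x_i=0\}$ whenever $a_i<2$, which can happen when $p_i>2$, and your annulus meets these hyperplanes. Near such a point $\Phi$ behaves like $C-c|x_i|^{a_i}$ with $1<a_i<2$, a downward cusp. No $C^2$ function touches $w$ or $w-\eta|x|^2$ from below there. Worse, a continuous function with such a cusp satisfies the viscosity supersolution condition vacuously at that point, so nothing localized at the contact point can give a contradiction. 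Inf-convolving $u$ does not help either, since a semiconcave function can carry exactly this cusp.

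Your closing remark that the hyperplane issue is "already handled by Lemma \ref{Phipdelem}" is where the reasoning breaks. That lemma only tests $\Phi$ from above. This information can be combined with the supersolution property of $u$ only through a genuine comparison principle: doubling of variables with Ishii's lemma, where $u$ is tested at $y_\alpha$ by the smooth function $w(x_\alpha)-\frac{\alpha}{2}|x_\alpha-y|^2$.

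The repair is the paper's device. Since $u\ge 0$, $u$ also satisfies $F(u,Du,D^2u)\le\epsilon_0$ for
$F(r,q,X)=\mathcal{M}^-\bigl(\diag{|q_i|^{\frac{p_i-2}{2}}}X\diag{|q_i|^{\frac{p_i-2}{2}}}\bigr)-\mu_0\sum_i|q_i|^{p_i-1}-\epsilon_0 r$.
This $F$ is strictly decreasing in $r$, so Lemma \ref{comp} applies to the viscosity subsolution $w$ as it stands. You then need $F(w,Dw,D^2w)>\epsilon_0$, that is $\Phi^{d_0}\ge\epsilon_0(1+w)$ on the annulus. The annulus lies outside $K_{r_0}(L)$, where $\Phi\le L$, so it suffices to take $\epsilon_0\le(\inf_{K_{R_1}(m_0)}\Phi)^{d_0}/(1+L)$.

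Alternatively, you can carry out the Ishii-lemma argument yourself with a uniform gap $\epsilon_0<\inf\Phi^{d_0}$. This works because the operator is $x$-independent and degenerate elliptic, but it amounts to proving comparison, not applying Lemma \ref{comp}.

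One minor slip: $\inf_{K_{R_1}(m_0)}\Phi$ is attained at the corners of the cube and is at most $\Phi_*$, so it is not bounded below by $\Phi_*$. Only its positivity is needed, and that holds.
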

\begin{proof}
    We construct a barrier function $\Psi \in C(K_{R_1}(m_0) \setminus K_{r_0}(L_0m_0))$ that satisfies the following properties:
    \begin{enumerate}
        \item $\mathcal{M}^- \left( \diag{|D_i\Psi|^{\frac{p_i-2}{2}}} D^2\Psi \diag{|D_i\Psi|^{\frac{p_i-2}{2}}} \right) -\mu_0\sum|D_i\Psi|^{p_i-1} -\epsilon_0\Psi>  \epsilon_0$ in $K_{R_1}(m_0) \setminus K_{r_0}(L_0m_0)$.
        \item $\Psi < 0  \leq u$ on $\partial K_{R_1}(m_0)$.
        \item $\Psi < L_0m_0 < u$ on $\partial K_{r_0}(L_0m_0)$.
        \item $\Psi > m_0$ in $ K_{1}(m_0)$.
    \end{enumerate}
    By Lemma \ref{barscal}, there exists $m_0 \in(0,1)$ such that $\frac{1}{m_0} \inf_{ K_1(m_0)} \Phi >2$.
    We define the barrier as a shift of $\Phi$: 
    $$\Psi(x) = \Phi(x) - m_0.$$
    Since $\Phi >2m_0$ in $K_1(m_0)$, condition $(4)$ follows.
    Moreover, we choose $R_1>1$ sufficiently large such that $\{ \Phi \geq m_0\} \subset K_{R_1}(m_0)$, which implies condition $(2)$.
    We also choose small $\eta \in(0,1)$ such that $ K_{R_1}(m_0) \subset \{\Phi \geq \eta m_0\}$.
    Using Lemma \ref{barscal} again, we choose $L_0>\frac{1}{m_0}$ sufficiently large satisfying $\frac{1}{L_0m_0} \sup_{\R^n \setminus K_{r_0}(L_0m_0)} \Phi <1$, which guarantees condition $(3)$.
    Finally, by Lemma \ref{Phipdelem}, there exists a constant $\mu_0>0$ such that
    \begin{align*}
        \mathcal{M}^-\left( \diag{|D_i\Psi|^{\frac{p_i-2}{2}}} D^2\Psi \diag{|D_i\Psi|^{\frac{p_i-2}{2}}} \right) -\mu_0\sum_i|D_i\Psi|^{p_i-1} > (\Psi+m_0)^{d_0} >\epsilon_0\Psi+\epsilon_0 \quad \text{ in }K_{R_1}(m_0) \setminus \{0\},
    \end{align*}
    where $\epsilon_0 = \eta^{d_0} m_0^{d_0}$.
    This yields condition $(1)$.
    Note that since $u \geq0$, we have
    \begin{align*}
        F(u,Du,D^2u) :=  \mathcal{M}^- \left( \diag{|D_iu|^{\frac{p_i-2}{2}}} D^2u \diag{|D_iu|^{\frac{p_i-2}{2}}} \right) -\mu_0\sum|D_iu|^{p_i-1} -\epsilon_0u\leq  \epsilon_0 \quad \text{ in }K_{R_1}(m_0).
    \end{align*}
    Since $F(r,p,X)$ is strictly decreasing in $r$ and degenerate elliptic, $F$ satisfies the comparison principle (Lemma \ref{comp}).
    Thus, since $F(u,Du,D^2u)\leq\epsilon_0 \leq  F(\Psi,D\Psi,D^2\Psi) $ in $K_{R_1}(m_0) \setminus K_{r_0}(L_0m_0)$ and $\Psi \leq u$ on  $\partial(K_{R_1}(m_0) \setminus K_{r_0}(L_0m_0))$, we have $\Psi \leq u$ in  $K_{R_1}(m_0) \setminus K_{r_0}(L_0m_0)$.
    Therefore, we obtain $ m_0 < \Psi \leq u$ in $K_1(m_0)\setminus K_{r_0}(L_0m_0)$, which completes the proof.
\end{proof}

By iterating the previous result, we obtain the following result.
\begin{lem} \label{doub2lem}
    Let $m_0 \in(0,1)$, $L_0>1$, $R_1>1$, $\mu_0\in (0,1)$ and $\epsilon_0 \in(0,1)$ be as in the previous lemma.
    If $u \in C(K_{R_1}(m_0))$ is nonnegative and satisfies \eqref{doubpde} in $K_{R_1}(m_0)$, then
    \begin{align} \label{doubkres}
        u>L_0^km_0 \quad \text{ in }K_{r_0^k}(L_0^km_0), \quad \Longrightarrow \quad u>m_0 \quad \text{ in } K_1(m_0)
    \end{align}
    for any $k \in \N$.
\end{lem}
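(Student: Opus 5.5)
Proof by induction on $k$, where the inductive step is a rescaled use of Lemma \ref{doublem}. The case $k=1$ is exactly Lemma \ref{doublem}. For the step, suppose \eqref{doubkres} holds for $k-1$ and that $u>L_0^k m_0$ in $K_{r_0^k}(L_0^k m_0)$. It suffices to show
\begin{align*}
u > L_0^{k-1}m_0 \quad \text{ in } K_{r_0^{k-1}}(L_0^{k-1}m_0).
\end{align*}
The induction hypothesis then gives $u>m_0$ in $K_1(m_0)$.

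To get this, rescale by Remark \ref{scal} with $r=r_0^{k-1}\le 1$ and $M=L_0^{k-1}\ge 1$. Set
\begin{align*}
v(x)=\frac{u(r^{\alpha_i}M^{\beta_i}x_i)}{M}.
\end{align*}
The intrinsic cubes transform consistently. Since $(r^{\alpha_i}M^{\beta_i})(s^{\alpha_i}N^{\beta_i}) = (rs)^{\alpha_i}(MN)^{\beta_i}$, the map $x\mapsto (r^{\alpha_i}M^{\beta_i}x_i)$ sends $K_s(N)$ onto $K_{rs}(MN)$. Hence:
\begin{itemize}
\item $v$ is defined and nonnegative on $K_{R_1}(m_0)$, because $K_{rR_1}(Mm_0)\subset K_{R_1}(m_0)$ (using $r\le1$, $M\ge1$ and the monotonicity of intrinsic cubes noted in Section \ref{sec2}).
\item The hypothesis $u>L_0^km_0$ in $K_{r_0^k}(L_0^km_0)$ becomes $v>L_0m_0$ in $K_{r_0}(L_0m_0)$.
\item The desired conclusion $u>Mm_0$ in $K_{r}(Mm_0)$ becomes $v>m_0$ in $K_1(m_0)$.
\end{itemize}

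Next, $v$ satisfies \eqref{doubpde} in $K_{R_1}(m_0)$. By Remark \ref{scal}, the coefficients scale as $\mu_0 r^{1/p_i}M^{-(p_n-p_i)/p_i}\le \mu_0$ and $\epsilon_0 r M^{-(p_n-1)}\le\epsilon_0$, because $r\le 1$ and $M\ge 1$. Lemma \ref{doublem} applied to $v$ then gives $v>m_0$ in $K_1(m_0)$, which is the claim.

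The only point needing care is domain compatibility: the equation for $u$ is only assumed on $K_{R_1}(m_0)$, so the rescaled function must live on a subset of it. This holds by the containment $K_{rR_1}(Mm_0)\subset K_{R_1}(m_0)$ above. The smaller structural constants after scaling are harmless, since a smaller $\mu$ or right-hand side only weakens the inequality.
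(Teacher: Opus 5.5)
Your proof is correct and follows the paper's argument. The paper also inducts on $k$. Its step rescales via Remark \ref{scal} (at scale $(r_0^k,L_0^k)$ for the step $k\to k+1$, which is your $(r_0^{k-1},L_0^{k-1})$), applies Lemma \ref{doublem} to the rescaled function, scales back to get the hypothesis at the previous level, and then invokes the induction hypothesis. Your domain check $K_{rR_1}(Mm_0)\subset K_{R_1}(m_0)$ is the paper's containment $K_{R_1}(m_0)\subset K_{R_1/r_0^k}(m_0/L_0^k)$ viewed from the other side. The coefficient comparison is also right: the rescaled inequality with smaller $\mu$ and smaller right-hand side implies the one with $\mu_0,\epsilon_0$, so it is stronger rather than weaker as you phrase it, but that is exactly the direction needed.
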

\begin{proof}
    We proceed by induction on $k$.
    The case of $k=1$ follows directly from Lemma \ref{doublem}.
    We assume that \eqref{doubkres} is true for some $k \in\N$ and prove it for $k+1$.
    Suppose that $u>L_0^{k+1}m_0$ in $K_{r_0^{k+1}}(L_0^{k+1}m_0)$.
    We define a rescaled function
    \begin{align*}
        v(x) = \frac{u((r_0^k)^{\alpha_i}(L_0^k)^{\beta_i} x_i)}{L_0^k}.
    \end{align*}
    This scaling is defined on $v \in C( K_{R_1/r_0^k}(m_0/L_0^k))$.
    Observe that $K_{R_1}(m_0)\subset K_{R_1/r_0^k}(m_0/L_0^k)$ and, by Remark \ref{scal}, the function $v$ is again nonnegative and satisfies \eqref{doubpde} in $K_{R_1}(m_0)$.
    Moreover, the inductive assumption $u>L_0^{k+1}m_0$ in $K_{r_0^{k+1}}(L_0^{k+1}m_0)$ translates directly to $v > L_0m_0$ in $K_{r_0}(L_0m_0)$.
    Applying Lemma \ref{doublem}, we obtain $v > m_0$ in $K_{1}(m_0)$.
    Rescaling back, this implies $u>L_0^km_0$ in $K_{r_0^k}(L_0^km_0)$.
    Therefore, by the induction assumption that \eqref{doubkres} is true for $k$, we conclude that $u>m_0$ in $K_1(m_0)$, which completes the proof.
\end{proof}

We conclude this section by proving the following type of doubling property.
\begin{lem} \label{doub3lem}
    Let $m_0 \in(0,1)$, $R_1>1$, $\mu_0\in (0,1)$ and $\epsilon_0 \in(0,1)$ be as in the previous lemma.
    For any fixed $\nu >1$, there exist $r_1 \in (0,1)$ and $k_0 \in \N$ such that if $u \in C(K_{R_1}(m_0))$ is nonnegative  and satisfies \eqref{doubpde} in $K_{R_1}(m_0)$, then
    \begin{align} \label{doubkres2}
        u>\nu^km_0 \quad \text{ in }K_{r_1^k}(\nu^km_0), \quad \Longrightarrow \quad u>m_0 \quad \text{ in } K_1(m_0),
    \end{align}
    for any $k \geq k_0$. 
\end{lem}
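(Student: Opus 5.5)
The plan is to reduce the statement to Lemma \ref{doub2lem}. Given $k$, we pick an integer $j$ such that the intrinsic cube $K_{r_0^j}(L_0^jm_0)$ fits inside $K_{r_1^k}(\nu^km_0)$ and the level $L_0^jm_0$ does not exceed $\nu^km_0$. Then the hypothesis of \eqref{doubkres2} implies the hypothesis of \eqref{doubkres} with $j$ in place of $k$, and Lemma \ref{doub2lem} yields $u>m_0$ in $K_1(m_0)$. No new PDE argument is needed; the whole proof is bookkeeping with the exponents $\alpha_i=1/p_i$ and $\beta_i=-(p_n-p_i)/p_i\le 0$.

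Concretely, set $\theta:=\log\nu/\log L_0>0$ and $j:=\lfloor k\theta\rfloor$, so that
\begin{align*}
    \frac{\nu^k}{L_0}< L_0^j\le \nu^k, \qquad j\geq k\theta-1.
\end{align*}
Taking $k_0$ large enough that $k_0\theta\ge 1$ guarantees $j\ge1$. The level comparison $L_0^jm_0\le \nu^km_0$ is immediate from this choice. For the cube inclusion we need, for each $i$,
\begin{align*}
    r_0^{j\alpha_i}(L_0^jm_0)^{\beta_i}\le r_1^{k\alpha_i}(\nu^km_0)^{\beta_i}.
\end{align*}
Here the level goes the wrong way: since $\beta_i\le0$ and $L_0^j\le\nu^k$, the factor $(L_0^j)^{\beta_i}$ is at least $(\nu^k)^{\beta_i}$. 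This is the one point to watch. We compensate using the lower bound $L_0^j>\nu^k/L_0$, which gives $(L_0^j)^{\beta_i}\le (\nu^k)^{\beta_i}L_0^{-\beta_i}$. The required inequality therefore reduces to $r_0^{j\alpha_i}L_0^{-\beta_i}\le r_1^{k\alpha_i}$. Raising to the power $1/\alpha_i=p_i$, this becomes
\begin{align*}
    r_0^{j}L_0^{p_n-p_i}\le r_1^{k}.
\end{align*}

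We choose $r_1:=r_0^{\theta/2}\in(0,1)$. Since $r_0<1$ and $j\ge k\theta-1$, we have $r_0^j\le r_0^{k\theta-1}$. It therefore suffices that
\begin{align*}
    r_0^{k\theta-1}L_0^{p_n-p_1}\le r_0^{k\theta/2}, \quad\text{i.e.}\quad r_0^{k\theta/2}\le r_0\,L_0^{-(p_n-p_1)},
\end{align*}
which holds for all $k\ge k_0$ once $k_0$ is chosen large depending on $r_0,L_0,\nu$ and $(p_i)$. With this choice, any $x\in K_{r_0^j}(L_0^jm_0)$ lies in $K_{r_1^k}(\nu^km_0)$, where $u>\nu^km_0\ge L_0^jm_0$. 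Lemma \ref{doub2lem} with index $j$ then gives $u>m_0$ in $K_1(m_0)$. Note that $r_1$ and $k_0$ depend on $\nu$ (through $\theta$) but not on $u$ or $k$, as the statement requires.
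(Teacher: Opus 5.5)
Your proposal is correct and takes essentially the paper's route: reduce to Lemma \ref{doub2lem} by choosing $j$ with $L_0^j\le \nu^k< L_0^{j+1}$, and use the upper bound $\nu^k<L_0^{j+1}$ to obtain the cube inclusion $K_{r_0^j}(L_0^jm_0)\subset K_{r_1^k}(\nu^km_0)$. The paper first passes to a coarser base $L=L_0^q>\nu$ and writes $r_1=\nu^{-\theta}$ with $\theta$ small. Your direct choice $j=\lfloor k\log\nu/\log L_0\rfloor$ and $r_1=r_0^{\theta/2}$ is an equivalent, slightly more explicit instance of the same bookkeeping.
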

\begin{proof}
    Choose $q \in \N$ such that $\nu < L_0^q$.
    Set $L=L_0^q$ and $r=r_0^q$.
    By Lemma \ref{doub2lem}, we have
    \begin{align} \label{doubkres3}
        u>L^jm_0 \quad \text{ in }K_{r^j}(L^jm_0), \quad \Longrightarrow \quad u>m_0 \quad \text{ in } K_1(m_0),
    \end{align}
    for any $j \in \N$.
    Choose $j_0 \in \N$ sufficiently large so that $r^{j_0} L^{p_n-p_1} < 1$.
    Next, we define $r_1 = \nu^{-\theta} \in (0,1) $ with a sufficiently small $\theta \in (0,1)$ such that
    \begin{align} \label{thetacond}
        rL^{\theta} <1, \quad \text{ and } \quad r^{j_0}L^{\theta(j_0+1)} L^{p_n-p_1}\leq 1.
    \end{align}
    We select $k_0 \in \N$ such that $L^{j_0} \leq \nu^{k_0}  < L^{j_0+1}$.
    Then for any $k \geq k_0$, there exists $j \geq j_0$ such that $L^{j} \leq \nu^{k}  < L^{j+1}$.
    Therefore, since $K_r(N) \supset K_r(M)$ for any $0<N\leq M$ and $r>0$, we have
    \begin{align*}
        u>\nu^km_0 \geq L^{j}m_0 \quad \text{ in }K_{r_1^k}(\nu^km_0) \supset K_{r_1^k}(L^{j+1}m_0).
    \end{align*}
    We claim that $K_{r_1^k}(L^{j+1}m_0) \supset K_{r^{j}}(L^{j}m_0)$.
    Observe that by \eqref{thetacond} and the fact that $j \geq j_0$, we obtain
    \begin{align*}
        \frac{r^{j}}{r_1^k}L^{p_n-p_i}=r^{j}\nu^{\theta k}L^{p_n-p_i} \leq r^{j}L^{\theta(j+1)} L^{p_n-p_i}\leq 1,
    \end{align*}
    for any coordinate index $i$.
    Since $-\frac{\beta_i}{\alpha_i}=p_n-p_i$, this is equivalent to
    \begin{align*}
        (r_1^k)^{\alpha_i} (L^{j+1}m_0)^{\beta_i} \geq (r^{j})^{\alpha_i} (L^{j}m_0)^{\beta_i},
    \end{align*}
    which implies the claim.
    Therefore, we have $u> L^{j}m_0$ in $K_{r^{j}}(L^{j}m_0)$.
    Applying \eqref{doubkres3}, we conclude that $u >m_0$ in $K_1(m_0)$.
\end{proof}

\section{$L^\epsilon$ estimate} \label{sec6}
In this section, we establish an $L^\epsilon$ estimate.
We first prove the following lemma by combining the basic measure estimate (Lemma \ref{basicmeaslem}) and the doubling property (Lemma \ref{doublem}).

\begin{lem} \label{measbarlem}
    Let $m_0 \in(0,1)$, $R_1>1$, $\mu_0\in (0,1)$ and $\epsilon_0 \in(0,1)$ be as in Lemma \ref{doublem}.
    There exist $L_1\geq L_0$ and $\delta_1 \in(0,1)$ such that if $r \in(0,1)$ and $u \in C(K_{R_1}(m_0))$ is nonnegative and  satisfies \eqref{doubpde} in $K_{R_1}(m_0)$ with
    \begin{align*}
        \inf_{K_r(m_0)} u \leq m_0,
    \end{align*}
    then
    \begin{align*}
        |\{u <L_1m_0\} \cap K_r(m_0)\}| \geq \delta_1|K_r(m_0)|.
    \end{align*}
\end{lem}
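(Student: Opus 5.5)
The plan is to combine the doubling property (Lemma \ref{doub2lem}) with the basic measure estimate (Lemma \ref{basicmeaslem}), after rescaling $K_r(m_0)$ to unit size. Fix $r\in(0,1)$ and suppose $\inf_{K_r(m_0)}u\le m_0$. Choose $k\in\mathbb{N}$ so that the cube $K_{r_0^k}(L_0^k m_0)$ is comparable to $K_r(m_0)$; more precisely, take $k$ minimal with $K_{r_0^k}(L_0^km_0)\subset K_r(m_0)$. Such a $k$ exists because $K_s(M)$ shrinks as $s\downarrow0$ and $M\uparrow\infty$. Minimality will give a two-sided comparability: each side length of $K_{r_0^k}(L_0^km_0)$ is at least a fixed fraction (depending only on $r_0,L_0,(p_i)$) of the corresponding side of $K_r(m_0)$. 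To see this, note that passing from $k-1$ to $k$ multiplies the $i$-th side by $r_0^{\alpha_i}L_0^{\beta_i}$. The contrapositive of \eqref{doubkres} then says: since $\inf_{K_r(m_0)}u\le m_0$ forces $u\le m_0$ somewhere in $K_1(m_0)\supset K_r(m_0)$, there is a point $x_0\in \overline{K_{r_0^k}(L_0^km_0)}$ with $u(x_0)\le L_0^km_0$. In fact I would apply the rescaled version of Lemma \ref{doublem} one step below, to get the infimum bound on a cube comparable to $K_r(m_0)$ itself.

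Next, rescale by setting $v(x)=u(s^{\alpha_i}M^{\beta_i}x_i)/M$, where $M=L_0^km_0$ and $s=r_0^k$ (or a fixed multiple). This places the touching point $x_0$ inside $K_{r_0}(1)$ up to the comparability constant. By Remark \ref{scal}, $v$ is still a supersolution with $\mu_0\le\Lambda$ and $\epsilon_0$ unchanged, because $s\le1$. Scaling affects the right-hand side only through $s/M^{p_n-1}$, which is harmless once $M\ge1$. If $M<1$, one instead normalizes by $m_0$ directly and uses that $\epsilon_0$ can be absorbed. We then have $\inf_{K_{r_0}(1)}v\le 1$. Lemma \ref{basicmeaslem}, applied with this $\epsilon_0$, gives
\[
|\{v<M_0\}\cap K_1(M_0)|\ge\delta_0|K_1(M_0)|.
\]
Scaling back yields a set of measure $\ge\delta_0|K_s(M_0M)|$ inside $K_s(M_0M)$ on which $u<M_0M$.

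The main obstacle is matching the geometry. The cube produced, $K_s(M_0L_0^km_0)$, must lie in $K_r(m_0)$ and have measure comparable to it. In addition, the level $M_0L_0^km_0$ must be bounded by $L_1m_0$ with $L_1$ independent of $r$. If $k$ grows as $r\to0$, the level $L_0^k$ is unbounded, and this is fatal. So the count must be done differently. I would first use the doubling lemma in the \emph{forward} direction at scale $r$: rescale $K_r(m_0)$ by $x_i\mapsto r^{\alpha_i}x_i$ with $M=1$, which preserves \eqref{doubpde} since $r\le1$, to reduce to $r=1$ and $\inf_{K_1(m_0)}u\le m_0$. Lemma \ref{doublem} then gives a point in $K_{r_0}(L_0m_0)$ where $u\le L_0m_0$.

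Finally, apply Lemma \ref{basicmeaslem} after rescaling by $M=L_0m_0$ and $s=1$; the base case is $M<1$, where the scaling constants are fixed. This yields $u<M_0L_0m_0=:L_1m_0$ on a subset of $K_1(M_0L_0m_0)\subset K_1(m_0)$ of measure $\ge\delta_0|K_1(M_0L_0m_0)|=:\delta_1|K_1(m_0)|$, with $\delta_1$ depending only on the fixed constants. The remaining checks are that the rescaled functions are defined on the needed domains, which may require enlarging $R_1$, and that $\mu_0\le\Lambda$.
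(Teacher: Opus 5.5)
Your final argument is the paper's proof: rescale $x_i\mapsto r^{\alpha_i}x_i$ to reduce to $r=1$, apply Lemma \ref{doublem} once to get $\inf_{K_{r_0}(L_0m_0)}u\le L_0m_0$, rescale by $L_0m_0$ to invoke Lemma \ref{basicmeaslem}, and scale back with $L_1=M_0L_0$ and $\delta_1=\delta_0|K_1(L_1m_0)|/|K_1(m_0)|$ (the iterated-$k$ detour in your first two paragraphs is unnecessary). One small correction: $L_0>1/m_0$ by construction in Lemma \ref{doublem}, so $M=L_0m_0>1$ rather than $<1$, Remark \ref{scal} applies directly, and no enlargement of $R_1$ is needed because $K_1(M_0)\subset K_{R_1}(1/L_0)$.
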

\begin{proof}
    We define a rescaled function $v \in C(K_{R_1/r}(m_0))$ as
    \begin{align*}
        v(x) = u(r^{\alpha_i} x_i).
    \end{align*}
    Then $v$ is nonnegative and satisfies \eqref{doubpde} in $K_{R_1}(m_0) \subset K_{R_1/r}(m_0)$ by Remark \ref{scal}.
    Moreover, since $\inf_{K_1(m_0)} v \leq m_0$, we have $\inf_{K_{r_0}(L_0m_0)} v \leq L_0m_0$ by Lemma \ref{doublem}.
    We also define a second scaled function $w \in C(K_{R_1}(1/L_0))$ as 
    \begin{align*}
        w(x) = \frac{v( (L_0m_0)^{\beta_i}x)}{L_0m_0}.
    \end{align*}
    Then $w$ is also nonnegative and satisfies \eqref{doubpde} in $K_1(M_0) \subset K_{R_1}(1/L_0)$.
    Furthermore, since $\inf_{K_{r_0}(L_0m_0)} v \leq L_0m_0$, we have $\inf_{K_{r_0}(1)} w \leq 1$.
    Applying Lemma \ref{basicmeaslem} yields
    \begin{align*}
        |\{w <M_0\} \cap K_1(M_0)\}| \geq \delta_0|K_1(M_0)|,
    \end{align*}
    for some $\delta_0=\delta_0(\epsilon_0) \in(0,1)$.
    Scaling back to the original function $u$, we obtain
    \begin{align*}
        |\{u <M_0L_0m_0\} \cap K_r(M_0L_0m_0)\}| \geq \delta_0|K_r(M_0L_0m_0)|.
    \end{align*}
    Letting $L_1 = M_0L_0$ and using $K_r(L_1m_0) \subset K_r(m_0)$, we have
    \begin{align*}
        |\{u <L_1m_0\} \cap K_r(m_0)\}| &\geq |\{u <L_1m_0\} \cap K_r(L_1m_0)\}| \\
        &\geq \frac{\delta_0|K_r(L_1m_0)|}{|K_r(m_0)|}|K_r(m_0)| = \delta_1|K_r(m_0)|,
    \end{align*}
    with $\delta_1 =\delta_0 \prod_{i}L_1^{\beta_i}$.
\end{proof}

Combining the previous lemma with the Vitali covering lemma (Lemma \ref{Vitalilem}), we now establish the $L^\epsilon$ estimate.
This result quantifies the decay of the superlevel sets for a supersolution $u$.
\begin{thm}[$L^\epsilon$ estimate] \label{Lepsilonthm}
    Let $m_0 \in(0,1)$, $L_1>1$, $\mu_0\in (0,1)$ and $\epsilon_0 \in(0,1)$ be as in the previous lemma.
    There exist $\epsilon \in(0,1)$, $R_2>1$ and $k_1 \in \N$ such that if $u \in C(K_{R_2}(m_0))$ is nonnegative in $K_{R_2}(m_0)$, and satisfies
    \begin{align} \label{Lepspde}
        \mathcal{M}^- \left( \diag{|D_iu|^{\frac{p_i-2}{2}}} D^2u \diag{|D_iu|^{\frac{p_i-2}{2}}} \right) -\mu_0\sum|D_iu|^{p_i-1} \leq  \epsilon_0 \quad \text{ in } K_{R_2}(m_0), 
    \end{align}
    with $u(0) \leq m_0$, then
    \begin{align*}
        \left|\{u  > L_1^km_0\} \cap \frac{2}{3}K_1(m_0)\right| \leq \epsilon^k\left|\frac{2}{3}K_1(m_0)\right| \quad \text{ for any } k\geq k_1.
    \end{align*}
\end{thm}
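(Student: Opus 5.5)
The plan is the standard Krylov--Safonov "growing ink-spots" iteration, carried out with intrinsic cubes at a \emph{fixed} level $m_0$. Lemma \ref{measbarlem} is applied at different radii $r$, and Lemma \ref{doub3lem} controls the large cubes. Set $A_k := \{u > L_1^k m_0\} \cap \frac{2}{3}K_1(m_0)$. The goal is a recursion $|A_{k+1}| \le (1-c\delta_1)|A_k|$ for $k \ge k_1$, with $c$ a dimensional constant coming from the $5$-times enlargement in Lemma \ref{Vitalilem}. This recursion yields the claim with $\epsilon = (1-c\delta_1)^{1/2}$, say, after adjusting the constant by the finitely many initial steps.

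\textbf{Step 1 (the starting set is not full).} First I will show that $A_{k_1}$ does not fill $\frac{2}{3}K_1(m_0)$, and more precisely that no intrinsic cube $K_r(m_0,x)$ of size comparable to $K_1(m_0)$ can lie in $\{u > L_1^{k}m_0\}$ for $k\ge k_1$. This is where Lemma \ref{doub3lem} (with $\nu = L_1$) and $u(0)\le m_0$ enter. Suppose $u > L_1^k m_0$ on a large cube $K_{s}(m_0,x)$ with $x\in \frac23 K_1(m_0)$. Translate to $x$ and note that $K_s(m_0)\supset K_{r_1^k}(L_1^k m_0)$ once $s$ is not too small, since $K_r(M)$ shrinks as $M$ grows. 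Then Lemma \ref{doub3lem}, applied on a cube centred at $x$ (which is why $R_2$ must be larger than $R_1$ by a fixed factor), gives $u > m_0$ on $K_1(m_0,x)\ni 0$. This contradicts $u(0)\le m_0$.

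\textbf{Step 2 (the ink-spot step).} For each density point $x$ of $A_{k+1}$, I take the maximal $r\in(0,1]$ such that $|A_{k+1}\cap K_r(m_0,x)| \ge (1-\delta_1)|K_r(m_0,x)|$ fails to hold beyond $r$. Step 1 guarantees that $r<1$. Lemma \ref{Vitalilem} then gives a disjoint subfamily whose $5$-fold enlargements cover $A_{k+1}$ up to measure zero. The key claim is that each enlarged cube $5K_r(m_0,x)\cap \frac23 K_1(m_0)$ lies in $A_k$. If not, there is a point in it with $u\le L_1^k m_0$. Rescale by $v(y)=u(x+r^{\alpha_i}(L_1^km_0)^{\beta_i}\cdots)/L_1^k$, using the intrinsic scaling of Remark \ref{scal}, so that the cube becomes $K_{r'}(m_0)$ at level $m_0$ with $\inf\le m_0$. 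Lemma \ref{measbarlem} then gives $|\{u<L_1^{k+1}m_0\}\cap K|\ge \delta_1|K|$, which contradicts the density choice.

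\textbf{Main obstacle.} The difficulty is geometric. Lemma \ref{measbarlem} lives on cubes at level $L_1^k m_0$, which are much flatter than the level-$m_0$ cubes used in the covering. So I expect to run the covering with cubes at level $L_1^k m_0$, and use the inclusion $K_r(L_1^km_0)\subset K_r(m_0)$ plus comparable measures via $\prod_i L_1^{\beta_i}$, exactly as at the end of Lemma \ref{measbarlem}. Alternatively, I would take radii $r\le r_1^k$ so that the "stopping radius" is controlled by Step 1 at every generation. Choosing $k_1$ large, via Lemma \ref{doub3lem}'s $k_0$, should absorb the loss from the $5$-dilation near $\partial\frac23K_1(m_0)$. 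I would try this level-adapted covering first, adapting the argument of \cite{Vedansh25}.
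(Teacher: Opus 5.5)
Your architecture is the right one: a Vitali covering of superlevel sets, Lemma \ref{doub3lem} to cap the stopping radius, and Lemma \ref{measbarlem} after intrinsic rescaling. However, Step 2 as written fails, for exactly the reason you flag and then leave unresolved.

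\textbf{Why Step 2 fails.} The only rescalings available are those of Remark \ref{scal}, $v(y)=u(x+r^{\alpha_i}M^{\beta_i}y_i)/M$. Under such a map a cube $K_s(N,x)$ becomes $K_{s/r}(N/M)$, so the level of the cube is divided by the same factor as the values of $u$. To turn the threshold $L_1^km_0$ into $m_0$ you must take $M=L_1^k$. Your level-$m_0$ cube $K_r(m_0,x)$ therefore becomes a cube at level $m_0/L_1^k$, not some $K_{r'}(m_0)$.

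The smallest $K_{r'}(m_0)$ containing it has $r'=rL_1^{k(p_n-p_1)}$, which may exceed $1$. It is $\prod_iL_1^{k(p_i-p_1)/p_i}$ times larger in measure. So the set where Lemma \ref{measbarlem} produces $u<L_1^{k+1}m_0$ need not meet your covering cube at all, and the contradiction with the density choice is lost.

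Your patch via ``comparable measures $\prod_iL_1^{\beta_i}$'' has the same defect. At generation $k$, passing between level $L_1^km_0$ and level $m_0$ costs $\prod_iL_1^{k\beta_i}\to0$, not a fixed constant. A recursion $|A_{k+1}|\le(1-c\,\delta^{(k)})|A_k|$ with $\sum_k\delta^{(k)}<\infty$ gives no decay. Also, Step 1 excludes \emph{containment} of large cubes in $\{u>L_1^km_0\}$; on its own it does not control the \emph{density} in your stopping rule.

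\textbf{What works (the paper's route).} Work entirely at the single level $L_1^km_0$ at generation $k$, with no density stopping and no comparison back to level $m_0$.

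Set $A_k=\{u>L_1^km_0\}\cap Q_k$, where $Q_k\downarrow\frac23K_1(m_0)$ are shrinking boxes whose margins exceed the generation-$k$ cube sizes. For $x_0\in Q_{k+1}\cap A_k$, take the largest $r_{x_0}$ with $K_{r_{x_0}}(L_1^km_0,x_0)\subset A_k$.

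Your Step 1 (Lemma \ref{doub3lem} with $u(0)\le m_0$) gives $\inf_{K_{r_1^k}(L_1^km_0,x_0)}u\le L_1^km_0$. The margins give $K_{r_1^k}(L_1^km_0,x_0)\subset Q_k$. Hence $r_{x_0}<r_1^k$, and the maximal cube touches $\{u\le L_1^km_0\}$ rather than $\partial Q_k$.

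Lemma \ref{Vitalilem} at the fixed level $L_1^km_0$ gives disjoint $K_l$ whose $5$-dilates cover $Q_{k+1}\cap A_k$, with a $k$-independent constant. The pure level rescaling $v(y)=u(x_l+(L_1^k)^{\beta_i}y_i)/L_1^k$ maps $K_l$ onto $K_{r_l}(m_0)$. Lemma \ref{measbarlem} then yields $|\{u<L_1^{k+1}m_0\}\cap K_l|\ge\delta_1|K_l|$, uniformly in $k$.

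The shrinking boxes give
\[
|Q_{k+1}\cap A_{k+1}|\le\Bigl(1-\frac1C\Bigr)|Q_k\cap A_k|+C\bigl(L_1^{k\beta_m}+r_1^{k/p_n}\bigr).
\]
So the boundary near $\partial\frac23K_1(m_0)$ is handled through an additive, geometrically small error in the recursion. It is not absorbed into a purely multiplicative recursion on the fixed set $\frac23K_1(m_0)$ by enlarging $k_1$.
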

\begin{proof}
    We choose $R_2>R_1$ such that $R_2^{1/p_n} \geq R_1^{1/p_n}+1$.
    This choice ensures that all intrinsic cubes used below remain within the domain $K_{R_2}(m_0)$.
    We then fix $\rho >1$ satisfying $\rho(1-L_1^{\beta_m}) \geq 1$, where $\beta_m<0$ is the largest exponent among the nonzero exponents $\beta_i<0$.
    We also select $k_1 >k_0$ sufficiently large such that $\rho L_1^{k_1\beta_m} \leq \frac{1}{3}$ and $\sum_{l=k_1}^\infty r_1^{l/p_n} \leq \frac{1}{3}$, where $r_1=r_1(L_1) \in (0,1)$ and $k_0=k_0(L_1) \in \N$ are as in Lemma \ref{doub3lem}.
    We define shrinking cubes $Q_k$ as 
    \begin{align*}
        Q_k := \left\{|x_i| < m_0^{\beta_i}\left(\frac{2}{3}+\rho L_1^{k\beta_i} \right) \text{ for }\beta_i\neq 0, \quad  |x_j| < \frac{2}{3} + \sum_{l=k}^{\infty} r_1^{l/p_n}\text{ for }\beta_j = 0 \right\}.
    \end{align*}
    Then we have $Q_{k+1} \subset Q_k$, $Q_{k_1} \subset K_1(m_0)$ and $\lim_{k\rightarrow \infty} Q_k = \frac{2}{3}K_1(m_0)$.
    For $k \geq k_1$, we define a superlevel set
    \begin{align*}
        A_k := \{ u > L_1^k m_0\} \cap Q_k.
    \end{align*}
    For any $x_0 \in Q_{k+1} \cap A_k$, let $r_{x_0}>0$ be the largest radius such that $K_{r_{x_0}}(L_1^km_0,x_0) \subset A_k$. 
    We claim that
    \begin{align*}
        K_{r_1^k}(L_1^km_0,x_0) \subset Q_k, \quad \text{ and } \quad\inf_{K_{r_1^k}(L_1^km_0,x_0)}u \leq  L_1^km_0.
    \end{align*}
    This immediately implies $K_{r_1^k}(L_1^km_0,x_0) \not\subset A_k$, and therefore $r_{x_0} < r_1^k <1$.
    
    To prove the claim, first note that for $\beta_i \neq 0$,
    \begin{align*}
        m_0^{\beta_i}\left(\frac{2}{3} + \rho L_1^{(k+1)\beta_i} \right) + (r_1^k)^{\alpha_i} (L_1^km_0)^{\beta_i} \leq m_0^{\beta_i}\left(\frac{2}{3} +  L_1^{k\beta_i} (\rho L_1^{\beta_i} +1)\right) \leq m_0^{\beta_i}\left(\frac{2}{3} +  \rho L_1^{k\beta_i}\right),
    \end{align*}
    and for $\beta_j = 0$,
    \begin{align*}
        \left(\frac{2}{3} +  \sum_{l=k+1}^{\infty} r_1^{l/p_n} \right) + (r_1^{k})^{\alpha_j} \leq \left(\frac{2}{3} +  \sum_{l=k}^{\infty} r_1^{l/p_n} \right).
    \end{align*}
    Hence we get $ K_{r_1^k}(L_1^km_0,x_0) \subset Q_k$.
    For the second part, since $u(0) \leq m_0$ and $x_0 \in K_1(m_0)$, we have $\inf_{K_1(m_0,x_0)}u \leq m_0$.
    By the choice of $R_2$, we get $K_{R_1}(m_0,x_0) \subset K_{R_2}(m_0)$.
    Applying Lemma \ref{doub3lem} yields $\inf_{K_{r_1^k}(L_1^km_0,x_0)}u \leq L_1^km_0$, which completes the claim.
    
    Consequently, we obtain the collection of cubes $\mathcal{F} = \{K_{r_{x_0}}(L_1^km_0,x_0) \}_{x_0 \in Q_{k+1}\cap A_k}$ which is a covering of $Q_{k+1}\cap A_k$.
    Using the Vitali covering lemma (Lemma \ref{Vitalilem}), we find a countable disjoint subcollection $\mathcal{G} = \{K_l=K_{r_{l}}(L_1^km_0,x_l) \}_{l\in\N}$ such that $ \bigcup_l5K_l \supset Q_{k+1}\cap A_k$.
    By the construction, we have
    \begin{align*}
        K_l \subset A_k, \quad\text{ and }\quad \inf_{K_l} u = L_1^km_0.
    \end{align*}
    We now claim that for any $l \in\N$,
    \begin{align*}
        |K_l \cap A_k\setminus \tilde{A}_{k+1}| \geq \delta_1|K_l|,
    \end{align*}
    where
    \begin{align*}
        \tilde{A}_{k+1} = \{ u> L_1^{k+1}m_0 \} \cap Q_k.
    \end{align*}
    To prove this, we consider the rescaled function $v$ as
    \begin{align*}
        v(x) = \frac{u(x_l+(L_1^k)^{\beta_i}x)}{L_1^k}.
    \end{align*}
    Since $K_{R_1}(m_0,x_l) \subset K_{R_2}(m_0) $, $v$ is nonnegative and satisfies \eqref{doubpde} in $K_{R_1}(m_0)\subset K_{R_1}(m_0/L_1^k)$.
    Moreover, since $\inf_{K_{r_{l}}(L_1^km_0,x_l)} u \leq L_1^km_0$,  we have $\inf_{K_{r_l}(m_0)}v \leq m_0$.
    Applying Lemma \ref{measbarlem}, we have
    \begin{align*}
        |\{v <L_1m_0\} \cap K_{r_l}(m_0)\}| \geq \delta_1|K_{r_l}(m_0)|.
    \end{align*}
    Scaling back to $u$, this becomes
    \begin{align*}
        |\{u <L_1^{k+1}m_0\} \cap K_l\}| \geq \delta_1|K_l|,
    \end{align*}
    which proves the claim.
    
    Since the cubes $K_l$ are pairwise disjoint and $ \bigcup_l5K_l \supset Q_{k+1}\cap A_k$, we obtain
    \begin{align*}
        |Q_{k+1} \cap A_k| &\leq \sum_l |5K_l| \leq C \sum_l |K_l| \\
        &\leq C \sum_l |K_l \cap A_k \setminus\tilde{A}_{k+1}|  \leq  C|A_k\setminus\tilde{A}_{k+1}|\\
        &\leq C(|Q_{k+1} \cap A_k \setminus A_{k+1}| + |Q_k| - |Q_{k+1}|).
    \end{align*}
    Next, using the elementary inequality
    $$\prod_{i}(s_i+t_i) - \prod_{i} t_i \leq \sum_{i} s_i \prod_{j\neq i} (s_j+t_j),$$
    for any $s_i,t_i \geq 0$, together with $\frac{2}{3}K_1(m_0) \subset Q_k \subset K_1(m_0)$, we obtain
    \begin{align*}
        |Q_k|-|Q_{k+1}| &\leq C \left(\sum_{\{i: \beta_i\neq0\}} m_0^{\beta_i}(\rho L_1^{k\beta_i} - \rho L_1^{(k+1)\beta_i}) +\sum_{\{i: \beta_i=0\}} (\sum_{l=k}^{\infty} r_1^{l/p_n}-\sum_{l=k+1}^{\infty} r_1^{l/p_n})\right) \\
        &\leq C(L_1^{k\beta_m} + r_1^{k/p_n}).
    \end{align*}
    Combining the above estimates yields
    \begin{align*}
        |Q_{k+1} \cap A_{k+1}|  &= |Q_{k+1} \cap A_k| - |Q_{k+1} \cap A_k \setminus A_{k+1}| \\
        &\leq \left(1-\frac{1}{C}\right)|Q_{k+1} \cap A_k| + |Q_k| - |Q_{k+1}| \\
        &\leq \left(1-\frac{1}{C}\right)|Q_{k} \cap A_k| + C(L_1^{k\beta_m} + r_1^{k/p_n}).
    \end{align*}
    Applying a standard iteration argument to this recursive inequality completes the proof of the theorem.
\end{proof}

The following corollary is a direct consequence of Theorem \ref{Lepsilonthm} and will be used later in the proof of the Harnack inequality.
\begin{cor} \label{Lepscor}
    Under the same assumptions as in Theorem \ref{Lepsilonthm}, there exists $L_2>1$ such that
    \begin{align*}
        \left|\{u \geq L_2m_0\} \cap \frac{2}{3}K_1(m_0)\right| \leq \frac{1}{4^{n+1}}\left|\frac{2}{3}K_1(m_0)\right|.
    \end{align*}
\end{cor}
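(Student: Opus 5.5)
The plan is to apply Theorem \ref{Lepsilonthm} with a single level $k$, chosen large enough that its right-hand side drops below $4^{-(n+1)}$. Concretely, let $\epsilon\in(0,1)$ and $k_1\in\N$ be as in Theorem \ref{Lepsilonthm}. I choose $k_2\ge k_1$ with $\epsilon^{k_2}\le 4^{-(n+1)}$. This is possible because $\epsilon<1$, so $\epsilon^k\to0$, and $k_2$ depends only on $n,(p_i),\lambda,\Lambda$.

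Next I set $L_2 := L_1^{k_2+1}$, which is $>1$ since $L_1>1$. Since $L_2 m_0 > L_1^{k_2} m_0$, there is the inclusion
\begin{align*}
\{u \geq L_2 m_0\} \subset \{u > L_1^{k_2} m_0\}.
\end{align*}
The extra power of $L_1$ only serves to pass from the non-strict superlevel set in the corollary to the strict one in the theorem. Alternatively one could take $L_2 = L_1^{k_2}$ and use the theorem at level $k_2+1$.

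Under the hypotheses of Theorem \ref{Lepsilonthm}, namely $u\ge0$ in $K_{R_2}(m_0)$, \eqref{Lepspde}, and $u(0)\le m_0$, the inclusion and the theorem with $k=k_2$ give
\begin{align*}
\left|\{u \geq L_2m_0\} \cap \tfrac{2}{3}K_1(m_0)\right| \le \left|\{u > L_1^{k_2}m_0\} \cap \tfrac{2}{3}K_1(m_0)\right| \le \epsilon^{k_2}\left|\tfrac{2}{3}K_1(m_0)\right| \le \frac{1}{4^{n+1}}\left|\tfrac{2}{3}K_1(m_0)\right|.
\end{align*}

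There is no real obstacle. The only points needing care are choosing $k_2$ no smaller than the threshold $k_1$ of Theorem \ref{Lepsilonthm}, and handling the strict versus non-strict inequality in the superlevel sets.
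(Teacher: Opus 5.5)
Your proposal is correct and matches the paper's intent: the paper gives no separate proof and calls the corollary a direct consequence of Theorem \ref{Lepsilonthm}, which is exactly your choice of a single level $k\ge k_1$ with $\epsilon^k\le 4^{-(n+1)}$ and $L_2$ a corresponding power of $L_1$. The extra factor of $L_1$ correctly handles the passage from the non-strict set $\{u\ge L_2m_0\}$ to the strict superlevel set in the theorem.
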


\section{Harnack inequality} \label{sec7}
In this section, we conclude the proof of the main result, Theorem \ref{Mainthm}, following an adaptation of the argument in \cite{Caffarelli95}.
\begin{lem} \label{Harnmidlem}
    Let $u \in C(K_{R_2}(m_0))$ be nonnegative in $K_{R_2}(m_0)$ and satisfy
    \begin{align} \label{harnpde}
    \begin{cases}
        \mathcal{M}^- \left( \diag{|D_iu|^{\frac{p_i-2}{2}}} D^2u \diag{|D_iu|^{\frac{p_i-2}{2}}} \right) -\mu_0\sum|D_iu|^{p_i-1} \leq  \epsilon_0 \\
        \mathcal{M}^+ \left( \diag{|D_iu|^{\frac{p_i-2}{2}}} D^2u \diag{|D_iu|^{\frac{p_i-2}{2}}} \right) +\mu_0\sum|D_iu|^{p_i-1} \geq  -\epsilon_0 
    \end{cases} 
     \quad \text{ in } K_{R_2}(m_0),
    \end{align}
    with $u(0) \leq m_0$.
    Then there exists $k_2 \in \N$ such that if
    there exists $x_0 \in K_1(m_0)$ with 
    \begin{align*}
        u(x_0) \geq 2\nu^{k-1} L_2m_0,
    \end{align*}
    for some $k \geq k_2$, then
    \begin{align*}
        \sup_{K_{R_{2}r_2^k}(\nu^km_0,x_0)} u \geq 2\nu^kL_2m_0,
    \end{align*}
    where $\nu = \frac{L_2}{L_2-1/2}$ and $r_2 =r_1(\nu)$ is as in Lemma \ref{doub3lem}.
\end{lem}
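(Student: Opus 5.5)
We argue by contradiction, following the Caffarelli--Cabré scheme adapted to intrinsic cubes. Suppose $u(x_0)\ge 2\nu^{k-1}L_2m_0$ but $u<2\nu^kL_2m_0$ on the cube $\tilde K:=K_{R_2r_2^k}(\nu^km_0,x_0)$. The goal is to show that $u$ is then large on a set whose measure is not small in a cube around $x_0$. Rescaled, this contradicts the $L^\epsilon$ decay of Corollary \ref{Lepscor}, which is centered at $0$ and valid because $u(0)\le m_0$.

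\textbf{Step 1 (upper barrier from the subsolution side).} Consider
\[
w(x)=\frac{2\nu^kL_2m_0-u\bigl(x_0+(R_2r_2^k)^{\alpha_i}(\nu^km_0)^{\beta_i}x_i\bigr)}{(\nu^k-\nu^{k-1})L_2m_0}\cdot m_0
\quad\text{on }K_{1}(\cdot)\text{-type cube}.
\]
The normalization is chosen, as in Remark \ref{scal}, so that $w\ge0$ there. By the second inequality in \eqref{harnpde} and Proposition \ref{pucciprop}(2), $w$ satisfies the supersolution inequality \eqref{Lepspde}. The point of the choice $\nu=L_2/(L_2-1/2)$ is the identity $2\nu^kL_2-2\nu^{k-1}L_2=\nu^k$. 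Hence $w(0)\le m_0$ after normalizing by $\nu^k m_0$, i.e. $w(0)\le \frac{2\nu^kL_2m_0-u(x_0)}{\nu^k}\le m_0$.

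In the rescaling, the coefficients pick up factors $(R_2r_2^k)/(\nu^km_0)^{p_n-1}$ in front of $\epsilon_0$ and $\mu_0$. These are $\le1$ once $k\ge k_2$ is large, because $r_2<1$ and $\nu^k\to\infty$. So Corollary \ref{Lepscor} applies to $w$ (Theorem \ref{Lepsilonthm}), giving
\[
\bigl|\{w\ge L_2m_0\}\cap \tfrac23K_1(m_0)\bigr|\le 4^{-(n+1)}\bigl|\tfrac23K_1(m_0)\bigr|.
\]
Unwinding the scaling, $u>\nu^kL_2m_0$, i.e. $u\ge L_2(\nu^km_0)$ roughly, on all but a $4^{-(n+1)}$ fraction of the intrinsic cube $\tfrac23K_{R_2r_2^k}(\nu^km_0,x_0)$.

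\textbf{Step 2 (contradiction with decay at level $\nu^k$).} Apply Theorem \ref{Lepsilonthm} to $u$ itself, centered at $0$. It gives $|\{u>L_1^jm_0\}\cap\frac23K_1(m_0)|\le\epsilon^j|\frac23K_1(m_0)|$, so
\[
|\{u>\nu^kL_2m_0\}\cap\tfrac23K_1(m_0)|\le C(\nu^kL_2)^{-\sigma},\qquad \sigma=\log(1/\epsilon)/\log L_1.
\]
Meanwhile Step 1 produces a subset of that superlevel set of measure at least $\tfrac12|\tfrac23K_{R_2r_2^k}(\nu^km_0,x_0)|$. This cube sits inside $K_1(m_0)$ (indeed inside the slightly enlarged cubes used there) for $k$ large, since $x_0\in K_1(m_0)$ and its side lengths shrink. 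Its measure is
\[
c\,(R_2r_2^k)^{\sum\alpha_i}(\nu^km_0)^{\sum\beta_i}=c\,\nu^{-\theta k\sum\alpha_i}\,\nu^{k\sum\beta_i},
\]
using $r_2=\nu^{-\theta}$ from the proof of Lemma \ref{doub3lem}. For a contradiction we need this lower bound to beat $C\nu^{-\sigma k}$ for $k\ge k_2$, i.e. we need $\theta\sum_i\alpha_i-\sum_i\beta_i<\sigma$.

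\textbf{Main obstacle.} The crux is this exponent comparison: the intrinsic cube shrinks like a fixed power of $\nu^{-k}$, and the $L^\epsilon$ decay must beat it. I would try first to avoid comparing with the global decay at all, and instead argue purely locally: iterate Step 1, or combine it with Lemma \ref{doub3lem}, to keep $\theta$ free. One would shrink $\theta$ (so $r_2$ closer to $1$) while $\sigma$ stays fixed, since $\sigma$ depends only on Theorem \ref{Lepsilonthm} and not on $\nu$. The $\sum\beta_i$ term, however, is intrinsic and cannot be tuned this way. The honest fallback is therefore to replace the measure comparison by a pointwise one. Using Lemma \ref{doub3lem} on $u$ at level $\nu^km_0$ around a point of the large set, one shows $\inf_{K_1(m_0)}u>m_0$ would fail to be contradicted unless $u(0)>m_0$. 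That is, the lower-bound-by-doubling propagates the largeness $u\gtrsim\nu^km_0$ on $K_{r_2^k}(\nu^km_0,\cdot)$ to $u>m_0$ at $0$, contradicting $u(0)\le m_0$. This requires checking, exactly as in the claim inside Theorem \ref{Lepsilonthm}, that the relevant intrinsic cubes nest. The factor $R_2$ in the radius is what guarantees the required room.
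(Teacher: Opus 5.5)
\textbf{There is a genuine gap in Step 2.} Your Step 1 is essentially the first half of the paper's argument. The rescaling should be $x\mapsto x_0+(r_2^k)^{\alpha_i}(\nu^k)^{\beta_i}x_i$, with values divided by the same $\nu^k$ as in Remark \ref{scal}. This maps $K_{R_2}(m_0)$ onto $K_{R_2r_2^k}(\nu^km_0,x_0)$, which is the domain Corollary \ref{Lepscor} needs. Your normalization only produces a function on a $K_1(1)$-type cube, smaller than $K_{R_2}(m_0)$, and it mixes the levels $\nu^km_0$ and $\nu^k$. Step 1 then yields $|\{u\le\nu^kL_2m_0\}\cap\frac23K_{r_2^k}(\nu^km_0,x_0)|\le4^{-(n+1)}|\frac23K_{r_2^k}(\nu^km_0)|$.

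You rightly abandon the comparison with the global decay at $0$, but the fallback also fails. Lemma \ref{doub3lem} needs $u>\nu^km_0$ at \emph{every} point of an intrinsic cube $K_{r_2^k}(\nu^km_0,y)$. Step 1 only gives this outside a set of small measure. The lemma can only be used here in contrapositive form: since $u(0)\le m_0$ and $0\in K_1(m_0,x_0)$, it yields a single point $\bar x\in K_{r_2^k}(\nu^km_0,x_0)$ with $u(\bar x)\le\nu^km_0$. One low point is entirely compatible with $u$ being large on most of the cube, so your setup gives no contradiction.

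\textbf{The missing idea} is a second application of Corollary \ref{Lepscor}, this time to the supersolution $u$ itself, rescaled around the low point. Set $w(x)=u(\bar x+(r_2^k)^{\alpha_i}(\nu^k)^{\beta_i}x_i)/\nu^k$. Then $w(0)\le m_0$, and choosing $k_2$ large keeps $K_{R_2r_2^k}(\nu^km_0,\bar x)$ inside $K_{R_2}(m_0)$. The corollary gives $|\{u\ge\nu^kL_2m_0\}\cap\frac23K_{r_2^k}(\nu^km_0,\bar x)|\le4^{-(n+1)}|\frac23K_{r_2^k}(\nu^km_0)|$.

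Because $\bar x\in K_{r_2^k}(\nu^km_0,x_0)$, both $\frac23$-cubes contain $\frac16K_{r_2^k}(\nu^km_0,\tilde x)$, where $\tilde x=(x_0+\bar x)/2$. This small cube has measure $4^{-n}$ times that of a $\frac23$-cube. Hence each of the two complementary sets $\{u\le\nu^kL_2m_0\}$ and $\{u\ge\nu^kL_2m_0\}$ occupies at most a quarter of it, which is absurd.

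This is why the constant in Corollary \ref{Lepscor} is $4^{-(n+1)}$. It also shows that only these two local $L^\epsilon$ estimates plus the doubling lemma are needed; no exponent comparison involving $\sigma$, $\theta$ or $\sum_i\beta_i$ ever enters.
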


\begin{proof}
    Let $k_2 \geq k_1$ with $1+r_2^{k_2\alpha_n}(R_2^{\alpha_n} +1) \leq R_2^{\alpha_n}$ ensuring all subsequent intrinsic cubes remain within the domain $K_{R_2}(m_0)$.
    We argue by contradiction and assume that for some $k \geq k_2$,
    \begin{align*}
        \sup_{K_{R_{2}r_2^k}(\nu^km_0,x_0)} u < 2\nu^kL_2m_0.
    \end{align*}
    Observe that the choice of $k_2$ implies $K_{R_{2}r_2^k}(\nu^km_0,x_0) \subset K_{R_2}(m_0)$.
    To derive a contradiction, we define a scaled function $v$ as follows:
    \begin{align*}
        v(x) = \frac{2\nu^kL_2m_0 - u(x_0+(r_2^k)^{\alpha_i}(\nu^k)^{\beta_i}x)}{\nu^k}.
    \end{align*}
    The function $v$ is nonnegative and satisfies \eqref{Lepspde} in $K_{R_2}(m_0)$.
    Since $v(0) \leq m_0$, we apply Corollary \ref{Lepscor} to find that
    \begin{align*}
        \left|\{v \geq L_2m_0\} \cap \frac{2}{3}K_1(m_0)\right| \leq \frac{1}{4^{n+1}}\left|\frac{2}{3}K_1(m_0)\right|.
    \end{align*}
    Scaling back to $u$ implies
    \begin{align} \label{harnmeas1}
        \left|\{u \leq \nu^k L_2m_0\} \cap \frac{2}{3}K_{r^k_2}(\nu^km_0,x_0)\right| \leq \frac{1}{4^{n+1}}\left|\frac{2}{3}K_{r^k_2}(\nu^km_0)\right|.
    \end{align}
    On the other hand, since $u(0) \leq m_0$, we have $\inf_{K_1(m_0,x_0)}u \leq m_0$.
    Using $K_{R_1}(m_0,x_0) \subset K_{R_2}(m_0)$ and applying Lemma \ref{doub3lem}, we obtain
    \begin{align*}
        \inf_{K_{r_2^k}(\nu^km_0,x_0)}u \leq \nu^k m_0.
    \end{align*}
    Consequently, there exists a point $\overline{x} \in K_{r_2^k}(\nu^km_0,x_0)$ satisfying $u(\overline{x}) \leq \nu^k m_0$.
    Note that $K_{R_2r_2^k}(\nu^km_0,\overline{x}) \subset K_{R_2}(m_0)$ by the choice of $k_2$.
    We then define a second scaled function
    \begin{align*}
        w(x) = \frac{u(\overline{x}+(r_2^k)^{\alpha_i}(\nu^k)^{\beta_i}x)}{ \nu^k}.
    \end{align*}
    Then $w$ is nonnegative and satisfies \eqref{Lepspde} in $K_{R_2}(m_0)$. 
    Since $w(0) \leq m_0$, Corollary \ref{Lepscor} yields
    \begin{align*}
        \left|\{w \geq L_2m_0\} \cap \frac{2}{3}K_1(m_0)\right| \leq \frac{1}{4^{n+1}}\left|\frac{2}{3}K_1(m_0)\right|.
    \end{align*}
    Scaling back to $u$, we get
    \begin{align} \label{harnmeas2}
        \left|\{u \geq \nu^k L_2m_0\} \cap \frac{2}{3}K_{r^k_2}(\nu^km_0,\overline{x})\right| \leq \frac{1}{4^{n+1}}\left|\frac{2}{3}K_{r^k_2}(\nu^km_0)\right|.
    \end{align}
    Note that since $\overline{x} \in K_{r_2^k}(\nu^km_0,x_0)$, we have
    \begin{align*}
        \frac{1}{6}K_{r_2^k}(\nu^km_0,\tilde{x}) \subset  \frac{2}{3}K_{r_2^k}(\nu^km_0,x_0) \cap \frac{2}{3}K_{r_2^k}(\nu^km_0,\overline{x}),
    \end{align*}
    where $\tilde{x} = \frac{x_0+\overline{x}}{2}$.
    Therefore, the estimates \eqref{harnmeas1} and \eqref{harnmeas2} imply 
    \begin{align*}
        \left|\{u \leq \nu^k L_2m_0\} \cap \frac{1}{6}K_{r_2^k}(\nu^km_0,\tilde{x})\right| \leq \frac{1}{4}\left|\frac{1}{6}K_{r_2^k}(\nu^km_0)\right|, \\
        \left|\{u  \geq\nu^k L_2m_0\} \cap \frac{1}{6}K_{r_2^k}(\nu^km_0,\tilde{x})\right| \leq \frac{1}{4}\left|\frac{1}{6}K_{r_2^k}(\nu^km_0)\right|.
    \end{align*}
    Summing the above two inequalities, we  obtain $\left|\frac{1}{6}K_{r^k_2}(\nu^km_0)\right| \leq \frac{1}{2}\left|\frac{1}{6}K_{r^k_2}(\nu^km_0)\right|$.
    This is a contradiction and completes the proof.
\end{proof}

\begin{lem} \label{harnlem}
    Let $u \in C(K_{R_3}(m_0))$ be nonnegative in $K_{R_3}(m_0)$ and satisfy
    \begin{align} \label{harnpde2}
    \begin{cases}
        \mathcal{M}^- \left( \diag{|D_iu|^{\frac{p_i-2}{2}}} D^2u \diag{|D_iu|^{\frac{p_i-2}{2}}} \right) -\mu_0\sum|D_iu|^{p_i-1} \leq  \epsilon_0 \\
        \mathcal{M}^+ \left( \diag{|D_iu|^{\frac{p_i-2}{2}}} D^2u \diag{|D_iu|^{\frac{p_i-2}{2}}} \right) +\mu_0\sum|D_iu|^{p_i-1} \geq  -\epsilon_0 
    \end{cases} 
     \quad \text{ in } K_{R_3}(m_0),
    \end{align}
    for some universal $R_3 > R_2$.
    Then there exists $C_0>1$ such that
    \begin{align*}
        u(0) \leq m_0 \quad &\Longrightarrow \quad \sup_{K_{1/2}(m_0)} u \leq C_0m_0, \\
        u(0) \geq C_0m_0 \quad &\Longrightarrow \quad \inf_{K_{1/2}(m_0)} u \geq m_0.
    \end{align*}
\end{lem}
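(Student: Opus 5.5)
The plan has two parts, one for each implication. The first (the sup bound) comes from iterating Lemma \ref{Harnmidlem}, in the style of Caffarelli's proof in \cite{Caffarelli95}. The second (the inf bound) comes from the first together with the doubling property of Lemma \ref{doub3lem}.

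\textbf{Sup bound.} Assume $u(0)\le m_0$. Argue by contradiction: suppose $\sup_{K_{1/2}(m_0)}u> C_0m_0$, where $C_0=2\nu^{k_*-1}L_2$ and $k_*\ge k_2$ is a large integer to be fixed. Then there is $x_{k_*}\in K_{1/2}(m_0)$ with $u(x_{k_*})\ge 2\nu^{k_*-1}L_2m_0$. Lemma \ref{Harnmidlem} (applied at a point of $K_1(m_0)$) gives $x_{k_*+1}\in K_{R_2r_2^{k_*}}(\nu^{k_*}m_0,x_{k_*})$ with $u(x_{k_*+1})\ge 2\nu^{k_*}L_2m_0$. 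Repeating this produces a sequence $x_k$ with $u(x_k)\ge 2\nu^{k-1}L_2m_0\to\infty$.

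The displacements satisfy $|(x_{k+1}-x_k)_i|\le R_2 r_2^{k\alpha_i}(\nu^km_0)^{\beta_i}\le R_2^{\alpha_i}m_0^{\beta_i}r_2^{k\alpha_i}$, using $\beta_i\le0$ and $\nu>1$. Summing these is a geometric series, so the total displacement in coordinate $i$ is at most $C R_2^{\alpha_i}m_0^{\beta_i}r_2^{k_*\alpha_n}/(1-r_2^{\alpha_n})$. Choose $k_*$ so large that this is at most $\tfrac12 m_0^{\beta_i}$. Then every $x_k$ stays in $K_1(m_0)$, which is required to reapply Lemma \ref{Harnmidlem}. The cubes $K_{R_2r_2^k}(\nu^km_0,x_k)$ stay inside $K_{R_2}(m_0)\subset K_{R_3}(m_0)$ by the choice of $k_2$ and of $R_3$. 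The points $x_k$ then converge to some $\bar x$ in the compact closure of $K_1(m_0)$, and continuity of $u$ contradicts $u(x_k)\to\infty$.

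\textbf{Inf bound.} Assume $u(0)\ge C_0m_0$ and, arguing by contradiction, that $u(y)<m_0$ for some $y\in K_{1/2}(m_0)$. Translate to $y$: the cube $K_{R_2}(m_0,y)$ lies in $K_{R_3}(m_0)$ once $R_3$ is large, say $R_3^{\alpha_n}\ge R_2^{\alpha_n}+1$, which covers all coordinates since $\alpha_i\ge\alpha_n$ and the side lengths carry the common factor $m_0^{\beta_i}$. Apply the sup bound to $u(y+\cdot)$. We need $0-y\in K_{1/2}(m_0)$; this holds because the cubes are symmetric and $y\in K_{1/2}(m_0)$. The sup bound then gives $u(0)\le\sup_{K_{1/2}(m_0,y)}u\le C_0m_0$. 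If the constant is chosen as $C_0'=2C_0$, this contradicts $u(0)\ge C_0'm_0$. So we take the final constant to be the larger one, and both implications hold with it.

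\textbf{Main obstacle.} The delicate step is the bookkeeping in the iteration. The cubes $K_{R_2r_2^k}(\nu^km_0,x_k)$ have coordinate-dependent scalings, so we must check coordinatewise that the accumulated drift is summable and stays inside $K_1(m_0)$. The bound above uses $\nu^{k\beta_i}\le1$ and $r_2^{k\alpha_i}\le r_2^{k\alpha_n}$, and it is uniform in $i$ after factoring out $m_0^{\beta_i}$. We must also check that the hypothesis $u(0)\le m_0$ is preserved in Lemma \ref{Harnmidlem} at each step. It is, since that lemma is stated relative to the origin with $x_0$ ranging over $K_1(m_0)$.
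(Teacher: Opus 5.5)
Your proof follows the paper's route. You iterate Lemma~\ref{Harnmidlem} from a point of $K_{1/2}(m_0)$, keep the points inside $K_1(m_0)$ by a summable drift estimate, and contradict continuity. You then get the infimum bound by recentering at a point $y\in K_{1/2}(m_0)$ with $u(y)<m_0$, using $R_3^{\alpha_n}\ge R_2^{\alpha_n}+1$ exactly as the paper does.

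There is one bookkeeping error, in the step you yourself flag as delicate. $K_{1/2}(m_0)$ has half-width $2^{-\alpha_i}m_0^{\beta_i}$ in the $i$-th coordinate, not $\tfrac12 m_0^{\beta_i}$. Since $\alpha_i\le\tfrac12$, we have $2^{-\alpha_i}\ge 2^{-1/2}$. So a total drift of up to $\tfrac12 m_0^{\beta_i}$ can push some $x_k$ outside $K_1(m_0)$, because $2^{-1/2}+\tfrac12>1$. Once that happens, Lemma~\ref{Harnmidlem} can no longer be applied.

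The drift in coordinate $i$ must stay below $(1-2^{-\alpha_i})m_0^{\beta_i}$. This is guaranteed uniformly in $i$ by the budget $(1-2^{-\alpha_n})m_0^{\beta_i}$. That is why the paper chooses $k_3$ with $\sum_{k\ge k_3}R_2^{\alpha_1}r_2^{k\alpha_n}<1-2^{-\alpha_n}$. Since your $k_*$ is free, this is a one-line fix. Relatedly, the $i$-th side of $K_{R_2r_2^k}(\nu^km_0,x_k)$ is $R_2^{\alpha_i}r_2^{k\alpha_i}\nu^{k\beta_i}m_0^{\beta_i}$, not $R_2r_2^{k\alpha_i}(\nu^km_0)^{\beta_i}$.

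Doubling $C_0$ at the end is harmless but unnecessary. If you start the contradiction from a single point with $u\ge C_0m_0$, you get the pointwise strict bound $u<C_0m_0$ on $K_{1/2}(m_0)$. Applied at $y$, that already gives $u(0)<C_0m_0$.
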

\begin{proof}
    We begin by choosing $k_3 \geq k_2$ such that $\sum_{k=k_3}^\infty R_2^{\alpha_1}r_2^{k\alpha_n} < 1-\left(\frac{1}{2}\right)^{\alpha_n}$.
    Our purpose is to prove that if $u \geq 0$ in $K_{R_2}(m_0)$ and $u(0) \leq m_0$, then
    \begin{align} \label{harnfirst}
        \sup_{K_{1/2}(m_0)} u < 2 \nu^{k_3-1}L_2m_0 = C_0m_0.
    \end{align}
    We proceed by contradiction.
    Suppose there exists a point $x_0 \in K_{1/2}(m_0)$ such that
    \begin{align*}
        u(x_0) \geq  2 \nu^{k_3-1} L_2m_0.
    \end{align*}
    Then by Lemma \ref{Harnmidlem}, there exists $x_1 \in K_{R_{2}r_2^{k_3}}(\nu^{k_3}m_0,x_0)$ such that
    \begin{align*}
        u(x_1) \geq 2 \nu^{k_3} L_2m_0.
    \end{align*}
    Observe that $x_1 \in K_1(m_0)$ since $\nu^{\beta_i} \leq 1$ and
    \begin{align*}
        |(x_1)_i|\leq m_0^{\beta_i}\left( \frac{1}{2^{\alpha_i}} + R_2^{\alpha_i} r_2^{k_3\alpha_i}  \right) \leq m_0^{\beta_i}.
    \end{align*}
    
    By induction, we claim that there exists a sequence of points $x_{l} \in K_{R_2 r_2^{k_3+l-1}}(\nu^{k_3+l-1}m_0,x_{l-1})$ for $l \geq 1$ satisfying 
    \begin{align} \label{xlcond}
        x_{l} \in K_1(m_0) \quad \text{ and } \quad u(x_l) \geq 2 \nu^{k_3 + l-1} L_2 m_0.
    \end{align} 
    With the base case $l =1$ completed, we prove \eqref{xlcond} for $l+1$ assuming inductively that \eqref{xlcond} holds for every $k \leq l$.
    Using Lemma \ref{Harnmidlem}, there exists $x_{l+1} \in K_{R_2 r_2^{k_3+l}}(\nu^{k_3+l}m_0,x_{l})$ satisfying $u(x_{l+1}) \geq 2 \nu^{k_3 + l} L_2 m_0$.
    Moreover, using $x_{k} \in K_{R_2 r_2^{k_3+k-1}}(\nu^{k_3+k-1}m_0,x_{k-1})$ for any $k \leq l $ and $\nu^{\beta_i} \leq 1$, we obtain
    \begin{align*}
        |(x_{l+1})_i| &\leq |(x_0)_i| + \sum_{k=1}^{l+1}|(x_k)_i-(x_{k-1})_i| \\
        &\leq m_0^{\beta_i}\left( \frac{1}{2^{\alpha_i}} + \sum_{k=k_3}^{k_3+l} R_2^{\alpha_i} r_2^{k\alpha_i}  \right) \leq m_0^{\beta_i}.
    \end{align*}
    This confirms that $x_{l+1} \in K_1(m_0)$, which completes the induction step.
    Therefore, as $l \rightarrow \infty$, $x_l$ converges to some point $x_\infty \in \overline{K_1(m_0)}$.
    However, the continuity of $u$ implies that $u(x_\infty)$ must be finite, which contradicts the fact that $u(x_l) \rightarrow \infty$ as $l \rightarrow \infty$ as required by \eqref{xlcond}.
    This establishes the first part of the lemma with $C_0 = 2 \nu^{k_3-1}L_2>1$.

    The second part is proved by contradiction.
    Let $R_3>R_2$ be chosen such that $R_3^{1/p_n} \geq R_2^{1/p_n}+1$.
    Suppose that $u(0) \geq C_0m_0$, but  there exists $x_0 \in K_{1/2}(m_0)$ such that $u(x_0) < m_0$.
    Since $K_{R_2}(m_0,x_0) \subset K_{R_3}(m_0)$ by the choice of $R_3$, the previous argument \eqref{harnfirst} yields $\sup_{K_{1/2}(m_0,x_0)} u < C_0m_0$.
    However, this contradicts our initial assumption $u(0) \geq C_0m_0$, which completes the proof.
\end{proof}

With the preceding lemma at hand, we are ready to prove the main theorem, Theorem \ref{Mainthm}.

\begin{proof}[Proof of Theorem \ref{Mainthm}]
    We set $C_0 = 2 \nu^{k_3-1}L_2$ as in Lemma \ref{harnlem} and $R_0 = 2R_3$.
    Assume that $u$ is defined in $K_{R_0r}(u(0)) \subset \Omega$.
    We define a rescaled function $v \in C(K_{R_3}(m_0))$ as
    \begin{align*}
        v(x) = \frac{u((2r)^{\alpha_i}M_1^{\beta_i}x)}{M_1},
    \end{align*}
    where $M_1 = \frac{u(0)}{m_0}>0$.
    Then  $v$ is nonnegative in $K_{R_3}(m_0)$, and satisfies
    \begin{align} \label{harnpdescal}
    \begin{cases}
        \mathcal{M}^- \left( \diag{|D_iv|^{\frac{p_i-2}{2}}} D^2v \diag{|D_iv|^{\frac{p_i-2}{2}}} \right) -\mu\sum_i \frac{(2r)^{1/p_i}}{M^{(p_n-p_i)/p_i}}|D_iv|^{p_i-1} \leq \frac{2r}{M^{p_n-1}}c_0  \\
        \mathcal{M}^+ \left( \diag{|D_iv|^{\frac{p_i-2}{2}}} D^2v \diag{|D_iv|^{\frac{p_i-2}{2}}} \right) +\mu\sum_i \frac{(2r)^{1/p_i}}{M^{(p_n-p_i)/p_i}}|D_iv|^{p_i-1} \geq -\frac{2r}{M^{p_n-1}}c_0
    \end{cases}
     \text{ in }K_{R_3}(m_0)
    \end{align}
    with $M=M_1$.
    Also, we have $v(0) \leq m_0$.
    By selecting $\epsilon_1 =  \frac{\epsilon_0}{2m_0^{p_n-1}}$, we have $\frac{2}{M_1^{p_n-1}}c_0 \leq \epsilon_0$.
    Moreover, we choose small enough $\mu_1>0$ such that $\mu_1 \frac{2^{1/p_i}}{M_1^{(p_n-p_i)/p_i}}\leq \mu_0$ for any $i$.
    Then $v$ satisfies \eqref{harnpde2}, so we obtain
    \begin{align*}
        \sup_{K_{1/2}(m_0)} v \leq C_0m_0,
    \end{align*}
    by Lemma \ref{harnlem}.
    Scaling back to $u$, we get
    \begin{align*}
        \sup_{K_{r}(u(0))} u \leq C_0u(0).
    \end{align*}
    This completes the first part of the theorem.
    
    For the second part, assume that $u$ is defined in $K_{R_0r}(u(0)/C_0) \subset \Omega$.
    We define a second rescaled function $w \in C(K_{R_3}(m_0))$ as
    \begin{align*}
        w(x) = \frac{u((2r)^{\alpha_i}M_2^{\beta_i}x)}{M_2},
    \end{align*}
    where $M_2 = \frac{u(0)}{C_0m_0}>0$.
    Then  $w$ is nonnegative in $K_{R_3}(m_0)$, and satisfies \eqref{harnpdescal} with $M=M_2$.
    Moreover, we have $w(0) \geq C_0m_0$.
    We choose $\epsilon_2 = \frac{\epsilon_0}{2(C_0m_0)^{p_n-1}}$ and sufficiently small $\mu_2 >0$ satisfying $\mu_2 \frac{2^{1/p_i}}{M_2^{(p_n-p_i)/p_i}}\leq \mu_0$ for any $i$.
    Then $w$ satisfies \eqref{harnpde2}, so we obtain
    \begin{align*}
        \inf_{K_{1/2}(m_0)} w \geq m_0,
    \end{align*}
    by Lemma \ref{harnlem}.
    Scaling back to $u$, we get
    \begin{align*}
        \inf_{K_{r}(u(0)/C_0)} u \geq \frac{1}{C_0}u(0).
    \end{align*}
    This completes the proof of Theorem \ref{Mainthm}.
\end{proof}

\textbf{Data Availability}
Data sharing not applicable to this article as no datasets were generated or analyzed during the current study.
	
\textbf{Conflict of interest}
The authors declared that they have no conflict of interest to this work.
	
\bibliographystyle{amsplain}
\bibliography{ref}

@article {Bousquet20,
    AUTHOR = {Bousquet, P. and Brasco, L.},
     TITLE = {Lipschitz regularity for orthotropic functionals with
              nonstandard growth conditions},
   JOURNAL = {Rev. Mat. Iberoam.},
  FJOURNAL = {Revista Matem\'atica Iberoamericana},
    VOLUME = {36},
      YEAR = {2020},
    NUMBER = {7},
     PAGES = {1989--2032},
      ISSN = {0213-2230,2235-0616},
   MRCLASS = {35J62 (35B65 35J70 49K20 49N60)},
  MRNUMBER = {4163990},
       DOI = {10.4171/rmi/1189},
       URL = {https://doi.org/10.4171/rmi/1189},
}

@article {Demengel17,
    AUTHOR = {Demengel, F.},
     TITLE = {Regularity properties of viscosity solutions for fully
              nonlinear equations on the model of the anisotropic
              {$\vec p$}-{L}aplacian},
   JOURNAL = {Asymptot. Anal.},
  FJOURNAL = {Asymptotic Analysis},
    VOLUME = {105},
      YEAR = {2017},
    NUMBER = {1-2},
     PAGES = {27--43},
      ISSN = {0921-7134,1875-8576},
   MRCLASS = {35J62 (35B65 35D40)},
  MRNUMBER = {3715607},
MRREVIEWER = {Maria\ Stella\ Fanciullo},
       DOI = {10.3233/asy-171433},
       URL = {https://doi.org/10.3233/asy-171433},
}

@article {Demengel16,
    AUTHOR = {Birindelli, I. and Demengel, F.},
     TITLE = {Existence and regularity results for fully nonlinear operators
              on the model of the pseudo {P}ucci's operators},
   JOURNAL = {J. Elliptic Parabol. Equ.},
  FJOURNAL = {Journal of Elliptic and Parabolic Equations},
    VOLUME = {2},
      YEAR = {2016},
    NUMBER = {1-2},
     PAGES = {171--187},
      ISSN = {2296-9020,2296-9039},
   MRCLASS = {35J60 (35A01 35B50 35B65 35D40 35J92)},
  MRNUMBER = {3645943},
MRREVIEWER = {Barbara\ Brandolini},
       DOI = {10.1007/BF03377400},
       URL = {https://doi.org/10.1007/BF03377400},
}

@article {Bousquet18,
    AUTHOR = {Bousquet, P. and Brasco, L. and Leone, C. and Verde, A.},
     TITLE = {On the {L}ipschitz character of orthotropic {$p$}-harmonic
              functions},
   JOURNAL = {Calc. Var. Partial Differential Equations},
  FJOURNAL = {Calculus of Variations and Partial Differential Equations},
    VOLUME = {57},
      YEAR = {2018},
    NUMBER = {3},
     PAGES = {Paper No. 88, 33},
      ISSN = {0944-2669,1432-0835},
   MRCLASS = {35J62 (35B65 35J70 49K20)},
  MRNUMBER = {3798025},
MRREVIEWER = {Teemu\ Lukkari},
       DOI = {10.1007/s00526-018-1349-3},
       URL = {https://doi.org/10.1007/s00526-018-1349-3},
}

@article {Bousquet16,
    AUTHOR = {Bousquet, P. and Brasco, L. and Julin, V.},
     TITLE = {Lipschitz regularity for local minimizers of some widely
              degenerate problems},
   JOURNAL = {Ann. Sc. Norm. Super. Pisa Cl. Sci. (5)},
  FJOURNAL = {Annali della Scuola Normale Superiore di Pisa. Classe di
              Scienze. Serie V},
    VOLUME = {16},
      YEAR = {2016},
    NUMBER = {4},
     PAGES = {1235--1274},
      ISSN = {0391-173X,2036-2145},
   MRCLASS = {35J62 (35B65 35J70 49N60)},
  MRNUMBER = {3616334},
MRREVIEWER = {Gabriella\ Zecca},
}

@article {Demengel162,
    AUTHOR = {Demengel, F.},
     TITLE = {Lipschitz interior regularity for the viscosity and weak
              solutions of the pseudo {$p$}-{L}aplacian equation},
   JOURNAL = {Adv. Differential Equations},
  FJOURNAL = {Advances in Differential Equations},
    VOLUME = {21},
      YEAR = {2016},
    NUMBER = {3-4},
     PAGES = {373--400},
      ISSN = {1079-9389},
   MRCLASS = {35J62 (35B51 35B65 35D30 35D40 35J92)},
  MRNUMBER = {3461298},
       URL = {http://projecteuclid.org/euclid.ade/1455805262},
}

@book {Caffarelli95,
    AUTHOR = {Caffarelli, L. A. and Cabr\'{e}, X.},
     TITLE = {Fully nonlinear elliptic equations},
    SERIES = {American Mathematical Society Colloquium Publications},
    VOLUME = {43},
 PUBLISHER = {American Mathematical Society, Providence, RI},
      YEAR = {1995},
     PAGES = {vi+104},
      ISBN = {0-8218-0437-5},
   MRCLASS = {35J60 (35-01 35B45 35B65 35Dxx)},
  MRNUMBER = {1351007},
MRREVIEWER = {P.\ Lindqvist},
       DOI = {10.1090/coll/043},
       URL = {https://doi.org/10.1090/coll/043},
}

@article {Ishii92,
    AUTHOR = {Crandall, M. G. and Ishii, H. and Lions,
              P.-L.},
     TITLE = {User's guide to viscosity solutions of second order partial
              differential equations},
   JOURNAL = {Bull. Amer. Math. Soc. (N.S.)},
  FJOURNAL = {American Mathematical Society. Bulletin. New Series},
    VOLUME = {27},
      YEAR = {1992},
    NUMBER = {1},
     PAGES = {1--67},
      ISSN = {0273-0979,1088-9485},
   MRCLASS = {35J60 (35B05 35D05 35G20)},
  MRNUMBER = {1118699},
MRREVIEWER = {P.\ Szeptycki},
       DOI = {10.1090/S0273-0979-1992-00266-5},
       URL = {https://doi.org/10.1090/S0273-0979-1992-00266-5},
}

@article {Fusco90,
    AUTHOR = {Fusco, N. and Sbordone, C.},
     TITLE = {Local boundedness of minimizers in a limit case},
   JOURNAL = {Manuscripta Math.},
  FJOURNAL = {Manuscripta Mathematica},
    VOLUME = {69},
      YEAR = {1990},
    NUMBER = {1},
     PAGES = {19--25},
      ISSN = {0025-2611,1432-1785},
   MRCLASS = {49K30 (49K10)},
MRREVIEWER = {T.\ Zolezzi},
       DOI = {10.1007/BF02567909},
       URL = {https://doi.org/10.1007/BF02567909},
}

@article {Cupini15,
    AUTHOR = {Cupini, G. and Marcellini, P. and Mascolo, E.},
     TITLE = {Local boundedness of minimizers with limit growth conditions},
   JOURNAL = {J. Optim. Theory Appl.},
  FJOURNAL = {Journal of Optimization Theory and Applications},
    VOLUME = {166},
      YEAR = {2015},
    NUMBER = {1},
     PAGES = {1--22},
      ISSN = {0022-3239,1573-2878},
   MRCLASS = {49N60 (35J20 35J60 49J45)},
MRREVIEWER = {Anna\ Zatorska-Goldstein},
       DOI = {10.1007/s10957-015-0722-z},
       URL = {https://doi.org/10.1007/s10957-015-0722-z},
}

@article {Cupini17,
    AUTHOR = {Cupini, G. and Marcellini, P. and Mascolo, E.},
     TITLE = {Regularity of minimizers under limit growth conditions},
   JOURNAL = {Nonlinear Anal.},
  FJOURNAL = {Nonlinear Analysis. Theory, Methods \& Applications. An
              International Multidisciplinary Journal},
    VOLUME = {153},
      YEAR = {2017},
     PAGES = {294--310},
      ISSN = {0362-546X,1873-5215},
   MRCLASS = {49N60 (35J20 35J60)},
MRREVIEWER = {Shiah-Sen\ Wang},
       DOI = {10.1016/j.na.2016.06.002},
       URL = {https://doi.org/10.1016/j.na.2016.06.002},
}

@article {Dibenedetto16,
    AUTHOR = {Dibenedetto, E. and Gianazza, U. and Vespri, V.},
     TITLE = {Remarks on local boundedness and local {H}\"older continuity of local weak solutions to anisotropic {$p$}-{L}aplacian type equations},
   JOURNAL = {J. Elliptic Parabol. Equ.},
  FJOURNAL = {Journal of Elliptic and Parabolic Equations},
    VOLUME = {2},
      YEAR = {2016},
    NUMBER = {1-2},
     PAGES = {157--169},
      ISSN = {2296-9020,2296-9039},
   MRCLASS = {35J62 (35B45 35B65 35D30 35J70 35J92)},
  MRNUMBER = {3645942},
MRREVIEWER = {Teemu\ Lukkari},
       DOI = {10.1007/BF03377399},
       URL = {https://doi.org/10.1007/BF03377399},
}

@article {Kim252,
     url = {https://doi.org/10.1515/forum-2025-0318},
title = {Lipschitz regularity for fully nonlinear elliptic equations with (p, q)-growth},
author = {S.-S. Byun and H. Kim},
pages = {1555--1568},
volume = {38},
number = {5},
journal = {Forum Mathematicum},
doi = {doi:10.1515/forum-2025-0318},
year = {2026},
}

@article {Bousquet24,
    AUTHOR = {Bousquet, P. and Brasco, L. and Leone, C.},
     TITLE = {Singular orthotropic functionals with nonstandard growth
              conditions},
   JOURNAL = {Rev. Mat. Iberoam.},
  FJOURNAL = {Revista Matem\'atica Iberoamericana},
    VOLUME = {40},
      YEAR = {2024},
    NUMBER = {2},
     PAGES = {753--802},
      ISSN = {0213-2230,2235-0616},
   MRCLASS = {35J62 (35B65 49J10 49N60)},
       DOI = {10.4171/rmi/1446},
       URL = {https://doi.org/10.4171/rmi/1446},
}

@article {Fusco93,
    AUTHOR = {Fusco, N. and Sbordone, C.},
     TITLE = {Some remarks on the regularity of minima of anisotropic
              integrals},
   JOURNAL = {Comm. Partial Differential Equations},
  FJOURNAL = {Communications in Partial Differential Equations},
    VOLUME = {18},
      YEAR = {1993},
    NUMBER = {1-2},
     PAGES = {153--167},
      ISSN = {0360-5302,1532-4133},
   MRCLASS = {49N60},
MRREVIEWER = {J.\ S.\ Joel},
       DOI = {10.1080/03605309308820924},
       URL = {https://doi.org/10.1080/03605309308820924},
}

@article {Cianchi00,
    AUTHOR = {Cianchi, A.},
     TITLE = {Local boundedness of minimizers of anisotropic functionals},
   JOURNAL = {Ann. Inst. H. Poincar\'e{} C Anal. Non Lin\'eaire},
  FJOURNAL = {Annales de l'Institut Henri Poincar\'e{} C. Analyse Non
              Lin\'eaire},
    VOLUME = {17},
      YEAR = {2000},
    NUMBER = {2},
     PAGES = {147--168},
      ISSN = {0294-1449,1873-1430},
   MRCLASS = {49N60 (49J10)},
  MRNUMBER = {1753091},
MRREVIEWER = {Martin\ Fuchs},
       DOI = {10.1016/S0294-1449(99)00107-9},
       URL = {https://doi.org/10.1016/S0294-1449(99)00107-9},
}

@article {Cupini09,
    AUTHOR = {Cupini, G. and Marcellini, P. and Mascolo, E.},
     TITLE = {Regularity under sharp anisotropic general growth conditions},
   JOURNAL = {Discrete Contin. Dyn. Syst. Ser. B},
  FJOURNAL = {Discrete and Continuous Dynamical Systems. Series B. A Journal
              Bridging Mathematics and Sciences},
    VOLUME = {11},
      YEAR = {2009},
    NUMBER = {1},
     PAGES = {66--86},
      ISSN = {1531-3492,1553-524X},
   MRCLASS = {49N60 (35J20 35J60 35J70)},
MRREVIEWER = {Niko\ M.\ Marola},
       DOI = {10.3934/dcdsb.2009.11.67},
       URL = {https://doi.org/10.3934/dcdsb.2009.11.67},
}

@article {Liskevich09,
    AUTHOR = {Liskevich, V. and Skrypnik, I. I.},
     TITLE = {H\"older continuity of solutions to an anisotropic elliptic
              equation},
   JOURNAL = {Nonlinear Anal.},
  FJOURNAL = {Nonlinear Analysis. Theory, Methods \& Applications. An
              International Multidisciplinary Journal},
    VOLUME = {71},
      YEAR = {2009},
    NUMBER = {5-6},
     PAGES = {1699--1708},
      ISSN = {0362-546X,1873-5215},
   MRCLASS = {35J62 (35B65 35J70)},
  MRNUMBER = {2524385},
MRREVIEWER = {Jos\'e\ Carmona Tapia},
       DOI = {10.1016/j.na.2009.01.007},
       URL = {https://doi.org/10.1016/j.na.2009.01.007},
}

@article {Imbert16,
    AUTHOR = {Imbert, C. and Silvestre, L.},
     TITLE = {Estimates on elliptic equations that hold only where the
              gradient is large},
   JOURNAL = {J. Eur. Math. Soc. (JEMS)},
  FJOURNAL = {Journal of the European Mathematical Society (JEMS)},
    VOLUME = {18},
      YEAR = {2016},
    NUMBER = {6},
     PAGES = {1321--1338},
      ISSN = {1435-9855,1435-9863},
   MRCLASS = {35J60 (35B45 35B65 35D40 35J70)},
  MRNUMBER = {3500837},
MRREVIEWER = {Barbara\ Brandolini},
       DOI = {10.4171/JEMS/614},
       URL = {https://doi.org/10.4171/JEMS/614},
}

@article {Cabre97,
    AUTHOR = {Cabr\'e, X.},
     TITLE = {Nondivergent elliptic equations on manifolds with nonnegative
              curvature},
   JOURNAL = {Comm. Pure Appl. Math.},
  FJOURNAL = {Communications on Pure and Applied Mathematics},
    VOLUME = {50},
      YEAR = {1997},
    NUMBER = {7},
     PAGES = {623--665},
      ISSN = {0010-3640,1097-0312},
   MRCLASS = {58G03 (35J60 53C21)},
  MRNUMBER = {1447056},
MRREVIEWER = {David\ L.\ Finn},
       DOI = {10.1002/(SICI)1097-0312(199707)50:7<623::AID-CPA2>3.3.CO;2-B},
       URL =
              {https://doi.org/10.1002/(SICI)1097-0312(199707)50:7<623::AID-CPA2>3.3.CO;2-B},
}

@article {Savin07,
    AUTHOR = {Savin, O.},
     TITLE = {Small perturbation solutions for elliptic equations},
   JOURNAL = {Comm. Partial Differential Equations},
  FJOURNAL = {Communications in Partial Differential Equations},
    VOLUME = {32},
      YEAR = {2007},
    NUMBER = {4-6},
     PAGES = {557--578},
      ISSN = {0360-5302,1532-4133},
   MRCLASS = {35J60 (35B65)},
  MRNUMBER = {2334822},
MRREVIEWER = {Fabiana\ Leoni},
       DOI = {10.1080/03605300500394405},
       URL = {https://doi.org/10.1080/03605300500394405},
}

@article{Vedansh25,
      title={Harnack inequality for degenerate fully nonlinear parabolic equations}, 
      author={A. Vedansh and J. Vesa },
      year={2025},
    JOURNAL = {arXiv preprint},
      pages={2506.10608},
      archivePrefix={arXiv},
      primaryClass={math.AP},
      url={https://arxiv.org/abs/2506.10608}, 
}

@article {Bousquet23,
    AUTHOR = {Bousquet, P. and Brasco, L. and Leone, C. and
              Verde, A.},
     TITLE = {Gradient estimates for an orthotropic nonlinear diffusion
              equation},
   JOURNAL = {Adv. Calc. Var.},
  FJOURNAL = {Advances in Calculus of Variations},
    VOLUME = {16},
      YEAR = {2023},
    NUMBER = {3},
     PAGES = {705--730},
      ISSN = {1864-8258,1864-8266},
   MRCLASS = {35K65 (35B65 35K92)},
  MRNUMBER = {4609806},
MRREVIEWER = {Feng\ Quan\ Li},
       DOI = {10.1515/acv-2021-0052},
       URL = {https://doi.org/10.1515/acv-2021-0052},
}

@article {Giaquinta87,
    AUTHOR = {Giaquinta, M.},
     TITLE = {Growth conditions and regularity, a counterexample},
   JOURNAL = {Manuscripta Math.},
  FJOURNAL = {Manuscripta Mathematica},
    VOLUME = {59},
      YEAR = {1987},
    NUMBER = {2},
     PAGES = {245--248},
      ISSN = {0025-2611,1432-1785},
   MRCLASS = {49B21 (49B22)},
  MRNUMBER = {905200},
MRREVIEWER = {Martin\ Brokate},
       DOI = {10.1007/BF01158049},
       URL = {https://doi.org/10.1007/BF01158049},
}

@article {Kim26,
      title={{ABP} estimate and {H}arnack inequality for a class of degenerate fully nonlinear pseudo-$p$-Laplacian equations}, 
      author={S.-S. Byun and H. Kim},
      year={2025},
      JOURNAL={arXiv preprint},
      pages={2509.24442},
      primaryClass={math.AP},
      url={https://arxiv.org/abs/2509.24442}, 
}

@article {DiBenedetto08,
    AUTHOR = {DiBenedetto, E. and Gianazza, U. and Vespri, V.},
     TITLE = {Harnack estimates for quasi-linear degenerate parabolic
              differential equations},
   JOURNAL = {Acta Math.},
  FJOURNAL = {Acta Mathematica},
    VOLUME = {200},
      YEAR = {2008},
    NUMBER = {2},
     PAGES = {181--209},
      ISSN = {0001-5962,1871-2509},
   MRCLASS = {35K55 (35B45 35B65)},
  MRNUMBER = {2413134},
MRREVIEWER = {Luca\ Lorenzi},
       DOI = {10.1007/s11511-008-0026-3},
       URL = {https://doi.org/10.1007/s11511-008-0026-3},
}

@article {Ciani25,
    AUTHOR = {Ciani, S. and Henriques, E. and Skrypnik, I. I.},
     TITLE = {On the continuity of solutions to anisotropic elliptic
              operators in the limiting case},
   JOURNAL = {Bull. Lond. Math. Soc.},
  FJOURNAL = {Bulletin of the London Mathematical Society},
    VOLUME = {57},
      YEAR = {2025},
    NUMBER = {5},
     PAGES = {1548--1567},
      ISSN = {0024-6093,1469-2120},
   MRCLASS = {35J70 (35B65 35D30 35J92)},
  MRNUMBER = {4913167},
       DOI = {10.1112/blms.70047},
       URL = {https://doi.org/10.1112/blms.70047},
}

@article {Marcellini89,
    AUTHOR = {Marcellini, P.},
     TITLE = {Regularity of minimizers of integrals of the calculus of
              variations with nonstandard growth conditions},
   JOURNAL = {Arch. Rational Mech. Anal.},
  FJOURNAL = {Archive for Rational Mechanics and Analysis},
    VOLUME = {105},
      YEAR = {1989},
    NUMBER = {3},
     PAGES = {267--284},
      ISSN = {0003-9527},
   MRCLASS = {49A50},
MRREVIEWER = {Yao\ Tian\ Shen},
       DOI = {10.1007/BF00251503},
       URL = {https://doi.org/10.1007/BF00251503},
}

@article {Marcellini91,
    AUTHOR = {Marcellini, P.},
     TITLE = {Regularity and existence of solutions of elliptic equations
              with {$p,q$}-growth conditions},
   JOURNAL = {J. Differential Equations},
  FJOURNAL = {Journal of Differential Equations},
    VOLUME = {90},
      YEAR = {1991},
    NUMBER = {1},
     PAGES = {1--30},
      ISSN = {0022-0396,1090-2732},
   MRCLASS = {35J15 (35D10)},
MRREVIEWER = {Philip\ W.\ Schaefer},
       DOI = {10.1016/0022-0396(91)90158-6},
       URL = {https://doi.org/10.1016/0022-0396(91)90158-6},
}

@article {Marcellini20,
    AUTHOR = {Marcellini, P.},
     TITLE = {Regularity under general and {$p,q$}-growth conditions},
   JOURNAL = {Discrete Contin. Dyn. Syst. Ser. S},
  FJOURNAL = {Discrete and Continuous Dynamical Systems. Series S},
    VOLUME = {13},
      YEAR = {2020},
    NUMBER = {7},
     PAGES = {2009--2031},
      ISSN = {1937-1632,1937-1179},
   MRCLASS = {35J62 (35B65 49J40 49N60)},
  MRNUMBER = {4097630},
MRREVIEWER = {Christopher\ Steven\ Goodrich},
       DOI = {10.3934/dcdss.2020155},
       URL = {https://doi.org/10.3934/dcdss.2020155},
}

@article {Mingione21,
    AUTHOR = {Mingione, G. and R\v adulescu, V.},
     TITLE = {Recent developments in problems with nonstandard growth and
              nonuniform ellipticity},
   JOURNAL = {J. Math. Anal. Appl.},
  FJOURNAL = {Journal of Mathematical Analysis and Applications},
    VOLUME = {501},
      YEAR = {2021},
    NUMBER = {1},
     PAGES = {Paper No. 125197, 41},
      ISSN = {0022-247X,1096-0813},
   MRCLASS = {49-02 (35J50 49J10 49N60)},
  MRNUMBER = {4258810},
MRREVIEWER = {Carlo\ Mariconda},
       DOI = {10.1016/j.jmaa.2021.125197},
       URL = {https://doi.org/10.1016/j.jmaa.2021.125197},
}

@article {Marcellini14,
    AUTHOR = {D\"uzg\"un, F. G. and Marcellini, P. and Vespri, V.},
     TITLE = {Space expansion for a solution of an anisotropic
              {$p$}-{L}aplacian equation by using a parabolic approach},
   JOURNAL = {Riv. Math. Univ. Parma (N.S.)},
  FJOURNAL = {Rivista di Matematica della Universit\`a{} di Parma. New
              Series. A Journal of Pure and Applied Mathematics},
    VOLUME = {5},
      YEAR = {2014},
    NUMBER = {1},
     PAGES = {93--111},
      ISSN = {0035-6298,2284-2578},
   MRCLASS = {35J62 (35B65 35J92)},
  MRNUMBER = {3289598},
}

@article {Ciani23,
    AUTHOR = {Ciani, S. and Mosconi, S. and Vespri, V.},
     TITLE = {Parabolic {H}arnack estimates for anisotropic slow diffusion},
   JOURNAL = {J. Anal. Math.},
  FJOURNAL = {Journal d'Analyse Math\'ematique},
    VOLUME = {149},
      YEAR = {2023},
    NUMBER = {2},
     PAGES = {611--642},
      ISSN = {0021-7670,1565-8538},
   MRCLASS = {35K55},
  MRNUMBER = {4594402},
MRREVIEWER = {Jeffrey\ R.\ Anderson},
       DOI = {10.1007/s11854-022-0261-0},
       URL = {https://doi.org/10.1007/s11854-022-0261-0},
}

@article {Piro-Vernier19,
    AUTHOR = {Piro-Vernier, S. and Ragnedda, F. and Vespri,
              V.},
     TITLE = {H\"older regularity for bounded solutions to a class of
              anisotropic operators},
   JOURNAL = {Manuscripta Math.},
  FJOURNAL = {Manuscripta Mathematica},
    VOLUME = {158},
      YEAR = {2019},
    NUMBER = {3-4},
     PAGES = {421--439},
      ISSN = {0025-2611,1432-1785},
   MRCLASS = {35J62 (35B45 35B65 35J70)},
  MRNUMBER = {3914957},
MRREVIEWER = {Francesco\ Della Pietra},
       DOI = {10.1007/s00229-018-1034-z},
       URL = {https://doi.org/10.1007/s00229-018-1034-z},
}

@article {Feo21,
    AUTHOR = {Feo, F. and V\'azquez, J. L. and Volzone, B.},
     TITLE = {Anisotropic {$p$}-{L}aplacian evolution of fast diffusion
              type},
   JOURNAL = {Adv. Nonlinear Stud.},
  FJOURNAL = {Advanced Nonlinear Studies},
    VOLUME = {21},
      YEAR = {2021},
    NUMBER = {3},
     PAGES = {523--555},
      ISSN = {1536-1365,2169-0375},
   MRCLASS = {35K55 (35A08 35B40 35K65)},
  MRNUMBER = {4294175},
       DOI = {10.1515/ans-2021-2136},
       URL = {https://doi.org/10.1515/ans-2021-2136},
}

@article {Liao20,
    AUTHOR = {Liao, N. and Skrypnik, I. I. and Vespri, V.},
     TITLE = {Local regularity for an anisotropic elliptic equation},
   JOURNAL = {Calc. Var. Partial Differential Equations},
  FJOURNAL = {Calculus of Variations and Partial Differential Equations},
    VOLUME = {59},
      YEAR = {2020},
    NUMBER = {4},
     PAGES = {Paper No. 116, 31},
      ISSN = {0944-2669,1432-0835},
   MRCLASS = {35J70 (35B65 35J92)},
  MRNUMBER = {4114267},
MRREVIEWER = {Armin\ Schikorra},
       DOI = {10.1007/s00526-020-01781-x},
       URL = {https://doi.org/10.1007/s00526-020-01781-x},
}

@article {Ciani232,
    AUTHOR = {Ciani, S. and Skrypnik, I. I. and Vespri, V.},
     TITLE = {On the local behavior of local weak solutions to some singular
              anisotropic elliptic equations},
   JOURNAL = {Adv. Nonlinear Anal.},
  FJOURNAL = {Advances in Nonlinear Analysis},
    VOLUME = {12},
      YEAR = {2023},
    NUMBER = {1},
     PAGES = {237--265},
      ISSN = {2191-9496,2191-950X},
   MRCLASS = {35J75 (35B65 35K92)},
  MRNUMBER = {4476932},
MRREVIEWER = {Haitao\ Wan},
       DOI = {10.1515/anona-2022-0275},
       URL = {https://doi.org/10.1515/anona-2022-0275},
}

@article {Baldelli24,
    AUTHOR = {Baldelli, L. and Ciani, S. and Skrypnik, I. and
              Vespri, V.},
     TITLE = {A note on the point-wise behaviour of bounded solutions for a
              non-standard elliptic operator},
   JOURNAL = {Discrete Contin. Dyn. Syst. Ser. S},
  FJOURNAL = {Discrete and Continuous Dynamical Systems. Series S},
    VOLUME = {17},
      YEAR = {2024},
    NUMBER = {5-6},
     PAGES = {1718--1732},
      ISSN = {1937-1632,1937-1179},
   MRCLASS = {35J75 (35B65 35K55 35K92 35R30)},
  MRNUMBER = {4762558},
       DOI = {10.3934/dcdss.2022143},
       URL = {https://doi.org/10.3934/dcdss.2022143},
}

@article {Byun25,
    AUTHOR = {Byun, S.-S. and Kim, H. and Oh, J.},
     TITLE = {Interior {$W^{2,\delta}$} type estimates for degenerate fully
              nonlinear elliptic equations with {$L^{n}$} data},
   JOURNAL = {J. Funct. Anal.},
  FJOURNAL = {Journal of Functional Analysis},
    VOLUME = {289},
      YEAR = {2025},
    NUMBER = {6},
     PAGES = {Paper No. 111007, 37},
      ISSN = {0022-1236,1096-0783},
   MRCLASS = {35B65 (35D40 35J60 35J70)},
  MRNUMBER = {4897643},
MRREVIEWER = {Driss\ Meskine},
       DOI = {10.1016/j.jfa.2025.111007},
       URL = {https://doi.org/10.1016/j.jfa.2025.111007},
}

@book {DiBenedetto12,
    AUTHOR = {DiBenedetto, E. and Gianazza, U. and Vespri, V.},
     TITLE = {Harnack's inequality for degenerate and singular parabolic
              equations},
    SERIES = {Springer Monographs in Mathematics},
 PUBLISHER = {Springer, New York},
      YEAR = {2012},
     PAGES = {xiv+278},
      ISBN = {978-1-4614-1583-1},
   MRCLASS = {35-02 (35B45 35B65 35K59 35K65 35K67)},
  MRNUMBER = {2865434},
MRREVIEWER = {Alain\ Brillard},
       DOI = {10.1007/978-1-4614-1584-8},
       URL = {https://doi.org/10.1007/978-1-4614-1584-8},
}

@article {Boccardo90,
    AUTHOR = {Boccardo, L. and Marcellini, P. and Sbordone, C.},
     TITLE = {{$L^\infty$}-regularity for variational problems with sharp
              nonstandard growth conditions},
   JOURNAL = {Boll. Un. Mat. Ital. A (7)},
  FJOURNAL = {Unione Matematica Italiana. Bollettino. A. Serie VII},
    VOLUME = {4},
      YEAR = {1990},
    NUMBER = {2},
     PAGES = {219--225},
   MRCLASS = {35B35 (49J45)},
  MRNUMBER = {1066774},
       DOI = {10.1007/bf01934372},
       URL = {https://doi.org/10.1007/bf01934372},
}

\end{document}